\newtheorem{Theorem}{Theorem}
\newtheorem{Corollary}[Theorem]{Corollary}
\newtheorem{Conjecture}[Theorem]{Conjecture}
\newtheorem{Lemma}[Theorem]{Lemma}
\newtheorem{Definition}[Theorem]{Definition}
\DeclareMathOperator{\NN}{\mathbb{N}}
\DeclareMathOperator{\QQ}{\mathbb{Q}}
\DeclareMathOperator{\HF}{\mathcal{H}}
\DeclareMathOperator{\RHS}{\mathrm{RHS}}
\def\cFrac#1#2{%
  \begin{array}{@{}c@{}}\multicolumn{1}{c|}{#1}\\%
	\hline\multicolumn{1}{|c}{#2}\end{array}\;}
\title[Convolution powers of Narayana polynomials]{Hankel determinants for convolution powers of Narayana polynomials} 
\date{December 9, 2025}
\author{Guo-Niu Han}
\address{I.R.M.A., UMR 7501, Universit\'e de Strasbourg
et CNRS, 7 rue Ren\'e Descartes, F-67084 Strasbourg, France}
\email{guoniu.han@unistra.fr}
\subjclass[2020]{05A15, 11A55, 11B39, 11C20,  15A15}
\keywords{Jacobi continued fraction, Hankel continued fraction, 
Hankel determinant, Catalan numbers, Narayana polynomials}
\begin{document}
\begin{abstract} % <<<
We prove and generalize a conjecture of Johann Cigler on the Hankel determinants of convolution powers of Narayana polynomials. Our method follows a “guess-and-prove” strategy, relying on established techniques involving Hankel continued fractions. While the final forms of our theorems are given by simple closed expressions, the proofs require us to formulate and manage extremely large and intricate explicit expressions at intermediate stages. Most of the technically involved and lengthy formal verifications are carried out using a symbolic computation program, whose code is available on the author's personal webpage for independent verification.
We emphasize that our program delivers rigorous symbolic proofs, rather than merely verifying the initial terms.
\end{abstract}

% >>>
\maketitle

\section{Introduction} % <<<

The purpose of this paper is to establish and extend a conjecture of Johann Cigler concerning the Hankel determinants of convolution powers of Narayana polynomials \cite{Cigler2018}.  
Let ${\color{blue}\gamma}_{\color{purple}n} = \frac{1}{{\color{purple}n}+1} \binom{2{\color{purple}n}}{{\color{purple}n}}$ denote the ${\color{purple}n}$-th {\it Catalan number}. 
Hankel determinants formed from the Catalan sequence have been investigated in 
\cite{Aigner1999_Catalanlike, DeSainte1986Viennot, Krattenthaler2005, Tamm2001_Aspects, Cigler2011, Chang2013HuZhang_Hdet, Han2025Pedon}. 
In 2002, Cvetkovi\'c, Rajkovi\'c, and Ivkovi\'c \cite{Cvetkovic2002RI_CatalanHDet} determined the Hankel determinants of the sequence whose entries are sums of consecutive Catalan numbers:
\begin{equation}\label{sumCC}\det\left({\color{blue}\gamma}_{{\color{violet}i}+{\color{blue}j}}+{\color{blue}\gamma}_{{\color{violet}i}+{\color{blue}j}+1}\right)_{{\color{violet}i},{\color{blue}j}=0}^{{\color{purple}n}-1}
={\color{blue}F}_{2{\color{purple}n}+1}, 
\end{equation}
where ${\color{blue}F}_{\color{purple}n}$ denotes the ${\color{purple}n}$-th {\it Fibonacci number}.
%and
%$$
%\det\left({\color{cyan}C}_{{\color{violet}i}+{\color{blue}j}+1}+{\color{cyan}C}_{{\color{violet}i}+{\color{blue}j}+2}\right)_{{\color{violet}i},{\color{blue}j}=0}^{{\color{purple}n}-1}
%={\color{blue}F}_{2{\color{purple}n}+2}.
%$$
%
Since then, this identity has been extended in various directions 
\cite{Chamberland2007French_GenCatalan, Krattenthaler2010_genCatalan, Egecioglu2009_CatalanHdet, Cigler2021Kratt_linear, Krattenthaler2021_linearII, Wang2018Xin_Convolution, Cigler2023_Experimental, Rajkovic2007PB_HCatalan}.
Motivated by \eqref{sumCC}, 
Cigler \cite{Cigler2018} studied the Hankel determinants associated with convolution powers of Narayana polynomials. Recall that
the {\it Narayana polynomials} are given by
\begin{equation}\label{def:Narayana}{\color{blue}\gamma}_{\color{purple}n}({\color{cyan}t}) = \sum_{{\color{olive}k}=0}^{\color{purple}n} \binom{{\color{purple}n}}{{\color{olive}k}} \binom{{\color{purple}n}-1}{{\color{olive}k}} \frac{1}{{\color{olive}k}+1} {\color{cyan}t}^{\color{olive}k},
\end{equation}
which specialize to the Catalan numbers when ${\color{cyan}t}=1$.  The initial terms of the sequence $({\color{blue}\gamma}_{\color{purple}n}({\color{cyan}t}))_{{\color{purple}n}\geq 0}$ are
$$
1,1,1+{\color{cyan}t}, 1+3{\color{cyan}t}+{\color{cyan}t}^2, 1+6{\color{cyan}t}+6{\color{cyan}t}^2+{\color{cyan}t}^3, 1+10{\color{cyan}t}+20{\color{cyan}t}^2+10{\color{cyan}t}^3+{\color{cyan}t}^4, \ldots
$$
The generating function of the Narayana polynomials
\begin{equation*}
{\color{blue}\gamma}({\color{cyan}t},{\color{magenta}q}) = \sum_{{\color{purple}n}\geq 0} {\color{blue}\gamma}_{\color{purple}n}({\color{cyan}t}) {\color{magenta}q}^{\color{purple}n}
\end{equation*}
satisfies the quadratic relation
\begin{equation}\label{eq:Narayana}
-1 + (1 - {\color{magenta}q} + {\color{cyan}t}{\color{magenta}q})\,{\color{blue}\gamma}({\color{cyan}t},{\color{magenta}q}) - {\color{cyan}t}{\color{magenta}q}\,{\color{blue}\gamma}({\color{cyan}t},{\color{magenta}q})^2 = 0.
\end{equation}
The {\it convolution powers of the Narayana polynomials} ${\color{blue}\gamma}^{({\color{olive}\tau})}_{\color{purple}n}({\color{cyan}t})$ are defined via the following generating functions, depending on the parity of ${\color{olive}\tau}$:
\begin{align*}
\sum {\color{blue}\gamma}^{(2{\color{olive}\tau})}_{\color{purple}n}({\color{cyan}t}) {\color{magenta}q}^{\color{purple}n} &= {\color{red}G}({\color{cyan}t},{\color{magenta}q})^{\color{olive}\tau},\\
\sum {\color{blue}\gamma}^{(2{\color{olive}\tau}+1)}_{\color{purple}n}({\color{cyan}t}) {\color{magenta}q}^{\color{purple}n} &= {\color{blue}\gamma}({\color{cyan}t},{\color{magenta}q})\,{\color{red}G}({\color{cyan}t},{\color{magenta}q})^{\color{olive}\tau},
\end{align*}
where ${\color{red}G}({\color{cyan}t},{\color{magenta}q})=\frac{{\color{blue}\gamma}({\color{cyan}t},{\color{magenta}q})-1}{{\color{magenta}q}}$ denotes the generating function of the shifted Narayana polynomial sequence.
Observe that
$$
{\color{blue}\gamma}^{({\color{olive}\tau})}_{\color{purple}n}(1) = \frac{{\color{olive}\tau}}{2{\color{purple}n}+{\color{olive}\tau}} \binom{2{\color{purple}n}+{\color{olive}\tau}}{{\color{purple}n}}
$$
gives the ${\color{olive}\tau}$-fold convolution power of the Catalan numbers.
In \cite{Cigler2018}, Cigler investigated the Hankel determinants associated with the convolution powers of Narayana polynomials ${\color{blue}\gamma}^{({\color{olive}\tau})}_{\color{purple}n}({\color{cyan}t})$ and derived explicit expressions for the cases ${\color{olive}\tau} = 3$ and ${\color{olive}\tau} = 4$. 
He also proposed a conjecture for ${\color{olive}\tau} = 6$. 
We restate these results below. Let
\begin{equation}\label{def:Delta}
{\color{purple}\Delta}^{({\color{red}m})}_{{\color{purple}n}} =  \det \left({{\color{blue}\gamma}}^{({\color{red}m})}_{{\color{purple}n}} ({\color{cyan}t})  \right)_{{\color{violet}i},{\color{blue}j}=0}^{{\color{violet}N}-1}.
\end{equation}

\begin{Theorem}[\cite{Cigler2018}, Theorem 5.2]\label{th:cigler52}
	We have
\begin{align}
{\color{purple}\Delta}^{(3)}_{{\color{violet}i}+{\color{blue}j}} 
&={\color{cyan}t}^{\binom{{\color{violet}N}}{2}} \sum_{{\color{olive}k}=0}^{\lfloor {\color{violet}N}/2 \rfloor} (-1)^{\color{olive}k} \binom{{\color{violet}N}-{\color{olive}k}}{{\color{olive}k}} {\color{cyan}t}^{-{\color{olive}k}}.\label{th:Cigler5.2}
\end{align}
\end{Theorem}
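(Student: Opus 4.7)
The plan is to follow the guess-and-prove strategy outlined in the abstract, by first reducing to a self-referential continued-fraction equation for the generating function of $\gamma^{(3)}_n(t)$ and then extracting the Hankel determinant via the Hankel continued fraction (H-fraction) formalism.

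\textbf{Step 1 (Algebraic equation for the generating function).} Let $F(t,q) = \sum_{n \ge 0} \gamma^{(3)}_n(t)\, q^n = \gamma(t,q)\, G(t,q)$. Writing $\gamma = 1 + qG$ in \eqref{eq:Narayana} first yields the periodic J-fraction
\[
G(t,q) = \cfrac{1}{1-(1+t)q - tq^2\, G(t,q)},
\]
equivalent to the quadratic $tq^2 G^2 + ((1+t)q-1)G + 1 = 0$. Using the identity $F = G + qG^2$ together with $G = (tqF+1)/(1-q)$ to eliminate $G$ then produces, after a short calculation, the self-referential relation
\[
t^2 q^3\, F(t,q)^2 \;-\; \bigl(1 - (2+t)q + (1-t)q^2\bigr)\, F(t,q) \;+\; 1 \;=\; 0,
\]
equivalently
\[
F(t,q) = \cfrac{1}{1-(2+t)q+(1-t)q^2 \;-\; t^2 q^3\, F(t,q)}.
\]
Thus $F$ is represented by a periodic H-fraction whose block polynomial is $P(q) = 1 - (2+t)q + (1-t)q^2$ (of degree two) and whose jump weight is $t^2 q^3$.

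\textbf{Step 2 (From H-fraction to Hankel determinants).} Because $P(q)$ has degree two while the jump level is $q^3$, the ordinary Jacobi continued fraction of $F$ breaks down at every specialization where the Hankel values drop; in particular, at $t = 1$ one checks that $\Delta^{(3)}_N$ vanishes for $N \equiv 2 \pmod 3$, which is precisely the setting for which Han's H-fraction formalism was designed. Applied to the periodic data of Step~1, the H-fraction Hankel-determinant formula expresses $\Delta^{(3)}_N$ as an explicit polynomial in $t$: the jump weight $t^2$ accumulates into the global factor $t^{\binom{N}{2}}$ through its repeated appearance along the fraction, while the coefficients $-(2+t)$ and $(1-t)$ of $P$ combine to produce a finite alternating sum in $t^{-1}$ indexed by $0 \le k \le \lfloor N/2 \rfloor$.

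\textbf{Step 3 (Matching via a three-term recurrence).} To pin down the closed form, I would show that both sides of \eqref{th:Cigler5.2} obey the same three-term recurrence and agree on two initial values. For the right-hand side, Pascal's identity $\binom{N-k}{k} = \binom{N-1-k}{k} + \binom{N-1-k}{k-1}$ yields $Q_{N+1}(x) = Q_N(x) - x Q_{N-1}(x)$ with $Q_N(x) := \sum_{k \ge 0} (-1)^k \binom{N-k}{k} x^k$, and setting $x = 1/t$ and rescaling by $t^{\binom{N}{2}}$ gives a three-term recurrence in $\ZZ[t]$ for $R_N(t) := t^{\binom{N}{2}} Q_N(1/t)$. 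For the left-hand side, the quadratic relation of Step~1 together with standard Sylvester/Dodgson identities for Hankel matrices produces a three-term recurrence for $\Delta^{(3)}_N$ whose coefficients, after identification through the block data of $F$, coincide with those satisfied by $R_N$. The base cases $N = 1, 2$ (with $\Delta^{(3)}_1 = 1$ and $\Delta^{(3)}_2 = t-1$) close the induction.

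\textbf{Main obstacle.} The genuine bottleneck is Step~2: the intermediate polynomials arising from the H-fraction manipulations grow very rapidly with $N$, and verifying that their contributions collapse to the compact alternating sum on the right of \eqref{th:Cigler5.2} is not feasible by hand. This is exactly the role of the symbolic computation program referenced in the abstract, which produces rigorous symbolic verifications rather than merely checking initial terms. Once the three-term recurrence in Step~3 is secured, the remaining induction is routine.
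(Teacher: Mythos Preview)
First, note that the paper does \emph{not} prove this statement: Theorem~\ref{th:cigler52} is quoted from Cigler's paper as background. In fact, the present paper treats only the even convolution powers $\gamma^{(2m)}$, and near the end explicitly states that the odd case was examined ``but without success''. So there is no ``paper's own proof'' to compare against, and the H-fraction machinery developed here was not made to work for $\tau=3$.

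Your Step~1 is correct: the identity
\[
t^2 q^3 F^2 - \bigl(1-(2+t)q+(1-t)q^2\bigr)F + 1 = 0
\]
does hold for $F=\gamma G$, and the resulting self-referential expression $F=1/\bigl(P(q)-t^2q^3 F\bigr)$ with $P(q)=1-(2+t)q+(1-t)q^2$ is right.

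The genuine gap is Step~2. The periodic expansion you obtain is \emph{not} an H-fraction in the sense of Definition~\ref{def:super}. In an H-fraction one has numerators $v_j q^{k_{j-1}+k_j+2}$ and denominator polynomials $1+u_j(q)q$ with $\deg u_j\le k_{j-1}$. Here $k_0=0$ (since $F(0)=1$), so $u_1$ must be a constant; but matching $1+u_1(q)q=P(q)$ forces $u_1(q)=-(2+t)+(1-t)q$, of degree~$1$. Equivalently, there is no integer $k$ with $2k+2=3$. Hence Theorem~\ref{th:super2}(ii) does not apply to your periodic fraction, and the heuristic ``the jump weight $t^2$ accumulates into $t^{\binom{N}{2}}$'' has no formal content. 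To use the H-fraction route you would have to run Algorithm \texttt{NextABC} from $(A_0,B_0,C_0)=(-1,\,P(q),\,-t^2q^3)$; already $A_1=(1-t)-(2-t)q$ shows the data are not periodic, and you would need to guess and verify the full pattern, which you do not attempt.

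Step~3 is also not a proof as written. Dodgson condensation relates $H_N$ to shifted Hankel determinants $H_{N-1}^{(i,j)}$, not to $H_{N-1}$ and $H_{N-2}$ alone, so it does not directly yield a linear three-term recurrence of the form $R_{N+1}=t^N R_N - t^{2N-2}R_{N-1}$ that your right-hand side satisfies. You would need an additional argument (e.g.\ identifying the orthogonal polynomials, or controlling the shifted determinants simultaneously) to close this. Invoking ``the symbolic computation program referenced in the abstract'' is not appropriate here: that program was written for the even-$\tau$ lemmas of this paper, and the authors say explicitly that their approach did not succeed in the odd case.
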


\begin{Theorem}[\cite{Cigler2018}, Theorem 7.4]\label{th:cigler74}
	We have
\begin{align}
{\color{purple}\Delta}^{(4)}_{{\color{violet}i}+{\color{blue}j}} 
&=\begin{cases}
	(-1)^{\color{purple}n} {\color{cyan}t}^{2{\color{purple}n}({\color{purple}n}-1)} [{\color{purple}n}+1]_{{\color{cyan}t}^2} , & \text{if ${\color{violet}N}=2{\color{purple}n}$}, \\
	(-1)^{\color{purple}n} {\color{cyan}t}^{2{\color{purple}n}^2} [{\color{purple}n}+1]_{{\color{cyan}t}^2}, & \text{if ${\color{violet}N}=2{\color{purple}n}+1$}, 
\end{cases}\label{th:Cigler7.4}
\end{align}
where $[{\color{purple}n}]_{\color{magenta}q}$ is the standard notation for ${\color{magenta}q}$-number:
$$ [{\color{purple}n}]_{\color{magenta}q} = 1+{\color{magenta}q} + {\color{magenta}q}^2 + \cdots + {\color{magenta}q}^{{\color{purple}n}-1}.  $$

\end{Theorem}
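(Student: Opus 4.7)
The plan is to follow the guess-and-prove strategy announced in the abstract, leveraging the machinery of Hankel continued fractions. Because the closed form in \eqref{th:Cigler7.4} contains the factor $[{\color{purple}n}+1]_{{\color{cyan}t}^2}$, which is a genuine polynomial sum rather than a single monomial, the Hankel determinants cannot arise as products of coefficients of a \emph{regular} Jacobi continued fraction; a direct check of the ratios ${\color{purple}\Delta}^{(4)}_{{\color{olive}k}+1}{\color{purple}\Delta}^{(4)}_{{\color{olive}k}-1}/({\color{purple}\Delta}^{(4)}_{{\color{olive}k}})^2$ already produces a non-polynomial Jacobi coefficient by ${\color{olive}k}=2$. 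I therefore intend to work with the Hankel continued fraction (H-fraction), which is specifically designed to handle such degeneracies and whose block structure can naturally accommodate ${\color{cyan}t}$-number factors.

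The first step is to derive an algebraic equation for ${\color{red}G}({\color{cyan}t},{\color{magenta}q})^2 = \sum {\color{blue}\gamma}^{(4)}_{\color{purple}n}({\color{cyan}t}) {\color{magenta}q}^{\color{purple}n}$ from \eqref{eq:Narayana}. Writing ${\color{red}G} = ({\color{blue}\gamma}-1)/{\color{magenta}q}$, substituting into \eqref{eq:Narayana} and eliminating ${\color{blue}\gamma}$ yields a polynomial identity of degree two satisfied by ${\color{red}G}^2$ over $\mathbb{Z}[{\color{cyan}t},{\color{magenta}q}]$. This relation is the workhorse of the proof: it both constrains every H-fraction coefficient and furnishes the identity against which any guessed expansion can be verified symbolically.

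Next, I would tabulate the first Hankel determinants and the first few H-fraction coefficients of ${\color{red}G}^2$ by direct calculation, and from these data extract closed-form guesses for the H-fraction coefficients. Based on the shape of \eqref{th:Cigler7.4}, I expect a two-step periodic pattern that mirrors the split between ${\color{violet}N}=2{\color{purple}n}$ and ${\color{violet}N}=2{\color{purple}n}+1$. Proving these guesses then reduces to checking that the posited H-fraction satisfies the algebraic equation derived in the previous step; conceptually this is routine, but, as the abstract warns, it produces ``extremely large and intricate'' intermediate polynomial identities. This symbolic verification is the principal obstacle of the proof and must be discharged by the computer-algebra program.

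Finally, once the H-fraction of ${\color{red}G}^2$ is established, I apply the standard product formula expressing Hankel determinants in terms of H-fraction data. The factor $[{\color{purple}n}+1]_{{\color{cyan}t}^2}$ will appear as the contribution of a degenerate H-fraction block of length two whose super-diagonal polynomial has the shape $1+{\color{cyan}t}^2+{\color{cyan}t}^4+\cdots$, while the monomial ${\color{cyan}t}^{2{\color{purple}n}({\color{purple}n}-1)}$ (resp.\ ${\color{cyan}t}^{2{\color{purple}n}^2}$) and the sign $(-1)^{\color{purple}n}$ accumulate from the regular blocks. Separating the cases ${\color{violet}N}=2{\color{purple}n}$ and ${\color{violet}N}=2{\color{purple}n}+1$ then recovers the two formulas in \eqref{th:Cigler7.4}.
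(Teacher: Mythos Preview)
Your plan---derive the quadratic equation for $G^2=(\gamma-1)^2/q^2$, guess the continued-fraction data, verify symbolically against that equation, then read off the Hankel determinants from the product formula---is exactly the paper's approach: it proves Lemma~\ref{lem:HFrac:Q2} via Lemma~\ref{lem:ABCq2} and Algorithm \texttt{NextABC}, then deduces Theorem~\ref{th:main:Det2}, of which Theorem~\ref{th:cigler74} is the specialization $m=2$. One caveat on your expectations: for $m=2$ every $k_j=0$ in Lemma~\ref{lem:HFrac:Q2}, so the $H$-fraction is in fact an ordinary $J$-fraction with rational-in-$t$ coefficients $v_{2j}=-t^2[j]_{t^2}/[j+1]_{t^2}$ and $v_{2j+1}=-[j+2]_{t^2}/[j+1]_{t^2}$; the factor $[n+1]_{t^2}$ arises from the telescoping of these $v_j$ in the product \eqref{eq:HankelDetFundamental}, not from a degenerate block, and no Hankel determinant vanishes in this case.
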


\begin{Conjecture}[\cite{Cigler2018}, Conjecture 7.6]\label{conj:cigler76}
	We have
\begin{align}
{\color{purple}\Delta}^{(6)}_{{\color{violet}i}+{\color{blue}j}} 
&=\begin{cases}
	(-1)^{\color{purple}n} {\color{cyan}t}^{9{\color{purple}n}({\color{purple}n}-1)/2} [{\color{purple}n}+1]^2_{{\color{cyan}t}^3} , & \text{if ${\color{violet}N}=3{\color{purple}n}$}, \\
	(-1)^{\color{purple}n} {\color{cyan}t}^{3{\color{purple}n}(3{\color{purple}n}-1)/2} [{\color{purple}n}+1]^2_{{\color{cyan}t}^3} , & \text{if ${\color{violet}N}=3{\color{purple}n}+1$}, \\
	(-1)^{{\color{purple}n}+1} 3 {\color{cyan}t}^{3{\color{purple}n}(3{\color{purple}n}+1)/2} [3]_{\color{cyan}t}  {\color{teal}r}_{\color{purple}n}({\color{cyan}t}), & \text{if ${\color{violet}N}=3{\color{purple}n}+2$},
\end{cases}\label{conj:Cigler}
\end{align}
where
\begin{align*}
		{\color{teal}r}_{\color{purple}n}({\color{cyan}t}) &= 1+ 3{\color{cyan}t}^3 + 6{\color{cyan}t}^6 + \cdots + \binom{{\color{purple}n}+1}{2} {\color{cyan}t}^{3({\color{purple}n}-1)} \\
		& + \binom{{\color{purple}n}+2}{2} {\color{cyan}t}^{3{\color{purple}n}} + \binom{{\color{purple}n}+1}{2} {\color{cyan}t}^{3({\color{purple}n}+1)} + \cdots + {\color{cyan}t}^{6{\color{purple}n}}.
\end{align*}

\end{Conjecture}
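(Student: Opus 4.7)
The plan is to follow the ``guess-and-prove'' strategy via Hankel continued fractions. Recall that any formal power series admitting an H-fraction expansion with step sizes $k_i \in \NN$ and coefficients $v_i(t), u_i(t)$ has Hankel determinants given by an explicit product in the $u_i$ and $k_i$. The three-case structure in \eqref{conj:Cigler}, indexed by $N \bmod 3$, strongly suggests that the H-fraction of $G(t,q)^3$ is eventually periodic of period $3$: the squared factor $[n+1]^2_{t^3}$ points to a pair of $u_i$'s per period collapsing into a product indexed by $t^3$, while the exceptional third-case factor $[3]_t\,{\color{teal}r}_n(t)$ signals that exactly one step size $k_i \ge 1$ appears per period, producing a shift rather than a simple product. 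I would begin by generating a long prefix of the Taylor coefficients of $G(t,q)^3$ from \eqref{eq:Narayana}, running the H-fraction extraction algorithm, and reading off a conjectural closed form for $(k_i, v_i(t), u_i(t))$ in the three residue classes modulo $3$.

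To turn the guess into a theorem I use the fact that $\gamma(t,q)$ is algebraic of degree $2$ over $\QQ(t,q)$, so $Y := G(t,q)^3$ satisfies an explicit polynomial equation $P(t,q,Y)=0$ derived from \eqref{eq:Narayana} by substituting $\gamma = 1 + qY^{1/3}$ and clearing radicals (equivalently, eliminating $\gamma$ from the system $\{(-1+(1-q+tq)\gamma-tq\gamma^2)=0,\ Y q^3 = (\gamma-1)^3\}$). Because the conjectured H-fraction is periodic, I introduce a single variable $H$ representing the infinite periodic tail and let the $3$-step H-fraction operator $T$ impose the fixed-point equation $H = T(H)$. Combining the explicit finite ``head'' of the H-fraction with this fixed-point equation expresses $Y$ as a rational function of $H$, and substitution into $P$ reduces the entire verification to a single algebraic identity in $t, q, H$. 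This identity, while enormous when fully expanded, is of exactly the type that the author's symbolic computation program is engineered to prove rigorously, not merely sample.

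Once the H-fraction is established, evaluating the Hankel determinant product formula at $N=3n$, $3n+1$, $3n+2$ gives the three branches of \eqref{conj:Cigler}. The $t$-exponents $9n(n-1)/2$, $3n(3n-1)/2$, $3n(3n+1)/2$ all increase by $3n$ for each unit increase of $N$ within one period, which matches the expected total $t$-degree contributed by one H-fraction triple. The main obstacle is twofold: first, correctly guessing the period-$3$ H-fraction, including the step sizes and the precise $t$-dependence of the $u_i$, which is where the polynomial ${\color{teal}r}_n(t)$ with its pyramid of binomial coefficients will have to emerge as the numerator of the ``corrected'' $u$ at the nonzero-step position; and second, carrying out the symbolic polynomial identity that certifies the fixed-point equation $H = T(H)$. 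Once both are in hand, the three cases of the conjecture drop out as three different truncations of the same product formula.
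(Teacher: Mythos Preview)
Your overall strategy—H-fractions plus guess-and-prove, starting from the quadratic equation satisfied by $G(t,q)^3$—matches the paper's, but the central verification step has a genuine gap. You assume the H-fraction of $G(t,q)^3$ is \emph{periodic} in the strict sense that the coefficients repeat exactly, and you plan to certify it by a fixed-point equation $H=T(H)$ for the tail. This is false: the coefficients $v_j,u_j$ follow period-$3$ \emph{formula patterns}, but their actual values depend on $j$ through factors like $[j+1]_{t^3}$, $[j+2]_{t^3}$, and the polynomials $R(j)$ of \eqref{def:Rn} (whose $m=3$ instances are exactly the $r_n(t)$ in the conjecture). No two tails coincide, so there is no single $H$ and no fixed-point equation to solve. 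Relatedly, your guess that ``exactly one step size $k_i\ge 1$ appears per period'' is wrong: for $m=3$ all $k_i=0$, the fraction is an ordinary $J$-fraction, and indeed none of the Hankel determinants vanish—the conjecture already covers every $N$.

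What the paper does instead is this. Since $F=(\gamma-1)^3/q^3$ is quadratic over $\QQ(t)(q)$, every tail of its H-fraction is again quadratic, with coefficients obtained from the previous ones by the explicit map \texttt{NextABC}. The guess-and-prove is carried out on the \emph{sequence of quadratic triples} $(A_n,B_n,C_n)$: one conjectures closed forms for $A_{3j},B_{3j},C_{3j},A_{3j+1},\ldots$ as explicit functions of $j$ (Lemma~\ref{lem:ABCq3}; this is where $R(j)$ enters, in both numerators and denominators), and then verifies that these formulas satisfy the \texttt{NextABC} recursion for three consecutive steps, from index $3j$ to $3j+3$. That verification is a finite symbolic identity in $t,q$ and the quantities $[j+1]_{t^m},R(j),\ldots$, and is what the computer-algebra program certifies. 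Once the $j$-dependent H-fraction is established, the product formula \eqref{eq:HankelDetFundamental} telescopes to give the three cases. Your elimination idea for the starting quadratic is correct and coincides with Corollary~\ref{cor:F}; what must change is replacing the periodic-tail fixed point by this induction on $j$-indexed quadratics.
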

For further references on these Hankel determinants, we refer the reader to
\cite{Wang2018Xin_Convolution, Cigler2023_Experimental, Cigler2024ConvCatalan,
Egecioglu2007RR_AlmostHDet, Gessel2006Xin}.

\medskip

In this paper, we establish generalizations of Theorem \ref{th:cigler74} and Conjecture \ref{conj:cigler76}.
Rather than working with ${\color{red}G}({\color{cyan}t},{\color{magenta}q})^{{\color{red}m}}$,
we focus on the sequence generated by $({\color{blue}\gamma}({\color{cyan}t},{\color{magenta}q})-1)^{{\color{red}m}}$,
and derive closed-form expressions for the Hankel determinants of this sequence
as well as for its first, second, and third shifted versions:
$$
{({\color{blue}\gamma}({\color{cyan}t},{\color{magenta}q})-1)^{{\color{red}m}}}, 
\frac{({\color{blue}\gamma}({\color{cyan}t},{\color{magenta}q})-1)^{{\color{red}m}}}{{\color{magenta}q}},
\frac{({\color{blue}\gamma}({\color{cyan}t},{\color{magenta}q})-1)^{{\color{red}m}}}{{\color{magenta}q}^2},
\frac{({\color{blue}\gamma}({\color{cyan}t},{\color{magenta}q})-1)^{{\color{red}m}}}{{\color{magenta}q}^3}.
$$
It is straightforward to see that the second shifted sequence coincides with ${\color{blue}\gamma}^{(4)}_{\color{purple}n}({\color{cyan}t})$ when ${\color{red}m}=2$,
and that the third shifted sequence coincides with ${\color{blue}\gamma}^{(6)}_{\color{purple}n}({\color{cyan}t})$ when ${\color{red}m}=3$.
We also set ${\color{blue}\gamma}^{(2{\color{red}m})}_{\color{violet}i}({\color{cyan}t})=0$ for ${\color{violet}i}<0$.
Our main theorems are presented below, where we write $ {\color{cyan}\xi}_1 = (-1)^{{\color{purple}n}{\color{red}m}({\color{red}m}-1)/2}$ for short.

\begin{Theorem}\label{th:main:Det0}
For ${\color{red}m}\geq 1$ we have
\begin{align*}
{\color{purple}\Delta}^{(2{\color{red}m})}_{{\color{violet}i}+{\color{blue}j}-{\color{red}m}} 
&=\begin{cases}
	1, & \text{if ${\color{violet}N}=0$},\\
	-{\color{cyan}\xi}_1 {\color{cyan}t}^{{\color{red}m}({\color{purple}n}-1)({\color{red}m}{\color{purple}n}-2)/2} [{\color{purple}n}-1]_{{\color{cyan}t}^{\color{red}m}} , & \text{if ${\color{violet}N}={\color{red}m}{\color{purple}n}$}, \\
	(-1)^{{\color{red}m}}{\color{cyan}\xi}_1 {\color{cyan}t}^{{\color{red}m}^2 {\color{purple}n}({\color{purple}n}-1)/2} [{\color{purple}n}]_{{\color{cyan}t}^{\color{red}m}} , & \text{if ${\color{violet}N}={\color{red}m}{\color{purple}n}+1$}, \\
	0, & \text{otherwise}.
\end{cases}
\end{align*}
\end{Theorem}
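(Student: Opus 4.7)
The strategy is a guess-and-prove argument via Hankel continued fractions (H-fractions), following the framework developed in the author's earlier work on Jacobi/Hankel continued fractions. Since ${\color{blue}\gamma}^{(2{\color{red}m})}_i({\color{cyan}t})=0$ for ${\color{violet}i}<0$, the Hankel matrix in ${\color{purple}\Delta}^{(2{\color{red}m})}_{{\color{violet}i}+{\color{blue}j}-{\color{red}m}}$ is the Hankel matrix of the sequence whose ordinary generating function is
$$f({\color{magenta}q}):=({\color{blue}\gamma}({\color{cyan}t},{\color{magenta}q})-1)^{{\color{red}m}}.$$
The plan is to compute the H-fraction expansion of $f({\color{magenta}q})$ and then extract all Hankel determinants from the H-fraction coefficients via the standard product formula.

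Reading off the vanishing pattern of the claimed formula, ${\color{purple}\Delta}^{(2{\color{red}m})}_{{\color{violet}i}+{\color{blue}j}-{\color{red}m}}$ is nonzero exactly when ${\color{violet}N}\in\{0\}\cup\{{\color{red}m}+1,2{\color{red}m},2{\color{red}m}+1,3{\color{red}m},3{\color{red}m}+1,\ldots\}$, since the ${\color{violet}N}={\color{red}m}$ entry is killed by the factor $[{\color{purple}n}-1]_{{\color{cyan}t}^{\color{red}m}}=[0]_{{\color{cyan}t}^{\color{red}m}}=0$. In H-fraction language this fixes the step sequence to $(k_1,k_2,k_3,\ldots)=({\color{red}m}+1,{\color{red}m}-1,1,{\color{red}m}-1,1,\ldots)$, so the right ansatz is
$$f({\color{magenta}q})=\cfrac{v_{1}\,{\color{magenta}q}^{{\color{red}m}+1}}{1+u_{1}{\color{magenta}q}+\cfrac{v_{2}\,{\color{magenta}q}^{{\color{red}m}-1}}{1+u_{2}{\color{magenta}q}+\cfrac{v_{3}\,{\color{magenta}q}}{1+u_{3}{\color{magenta}q}+\cfrac{v_{4}\,{\color{magenta}q}^{{\color{red}m}-1}}{1+u_{4}{\color{magenta}q}+\ddots}}}}$$
with coefficients $u_i,v_i\in\mathbb{Q}({\color{cyan}t})$. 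I would compute $u_i$ and $v_i$ at small depth for small ${\color{red}m}$ by direct power-series division, then conjecture closed forms driven by the requirement that Han's product formula for Hankel determinants at indices $s_j=k_1+\cdots+k_j$ reproduce the target expressions $-{\color{cyan}\xi}_1{\color{cyan}t}^{{\color{red}m}({\color{purple}n}-1)({\color{red}m}{\color{purple}n}-2)/2}[{\color{purple}n}-1]_{{\color{cyan}t}^{\color{red}m}}$ and $(-1)^{{\color{red}m}}{\color{cyan}\xi}_1{\color{cyan}t}^{{\color{red}m}^2{\color{purple}n}({\color{purple}n}-1)/2}[{\color{purple}n}]_{{\color{cyan}t}^{\color{red}m}}$.

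To verify the guessed H-fraction I would exploit the quadratic relation \eqref{eq:Narayana}. Since $f=({\color{blue}\gamma}-1)^{{\color{red}m}}$ satisfies an explicit algebraic relation of low degree over $\mathbb{Q}({\color{cyan}t},{\color{magenta}q})$ deduced from \eqref{eq:Narayana}, this relation propagates to each tail $f_j({\color{magenta}q})$ of the H-fraction through the recursion relating $f_j$ to $f_{j+1}$. After the initial segment, the pattern $({\color{red}m}-1,1,{\color{red}m}-1,1,\ldots)$ is periodic of period $2$, so the verification reduces to checking a finite list of polynomial identities in ${\color{cyan}t}$ at the generic tail. Once these identities are confirmed, Han's product formula collapses to the stated closed forms, with the sign $-{\color{cyan}\xi}_1$ (resp.\ $(-1)^{{\color{red}m}}{\color{cyan}\xi}_1$) and the quadratic ${\color{cyan}t}$-exponents emerging from the accumulated $v_i$-powers.

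The main obstacle is the size of the intermediate symbolic computations. Even for modest ${\color{red}m}$ the conjectured $u_i,v_i$ are intricate rational functions of ${\color{cyan}t}$, and substituting them into the algebraic identities satisfied by the tails produces polynomial expressions far beyond hand calculation. This is exactly the regime that the abstract flags: a dedicated symbolic-computation program is indispensable, and the mathematical contribution is to set up the correct ansatz and induction structure so that the massive cancellations can be confirmed rigorously in closed form rather than merely at finitely many initial terms.
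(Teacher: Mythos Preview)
Your overall strategy is exactly the paper's: compute the $H$-fraction of $f({\color{magenta}q})=({\color{blue}\gamma}({\color{cyan}t},{\color{magenta}q})-1)^{{\color{red}m}}$, verify it through the quadratic relation inherited from \eqref{eq:Narayana}, and read off the determinants via the product formula of Theorem~\ref{th:super2}. You also correctly extracted the set $\{0,{\color{red}m}+1,2{\color{red}m},2{\color{red}m}+1,\ldots\}$ of indices with nonzero determinant.

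However, your displayed ansatz is wrong in two specific ways that would make the computation fail. First, since $f({\color{magenta}q})={\color{magenta}q}^{{\color{red}m}}+O({\color{magenta}q}^{{\color{red}m}+1})$, the leading numerator must be ${\color{magenta}q}^{{\color{red}m}}$, not ${\color{magenta}q}^{{\color{red}m}+1}$; you have confused the gap sequence $s_j-s_{j-1}=({\color{red}m}+1,{\color{red}m}-1,1,\ldots)$ with the numerator exponents, which in the $H$-fraction convention are $k_0=m$, $k_0+k_1+2=2m$, $k_1+k_2+2=m$, $m$, $m,\ldots$. Second, and more importantly, the denominators cannot all be of the form $1+u_j{\color{magenta}q}$ with $u_j\in\mathbb{Q}({\color{cyan}t})$: in Definition~\ref{def:super} the polynomial $u_j({\color{magenta}q})$ is allowed degree up to $k_{j-1}$, and here this degree is genuinely needed. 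The correct $H$-fraction (Lemma~\ref{lem:HFrac:Q0}) has $1+u_1({\color{magenta}q}){\color{magenta}q}={\color{magenta}\beta}({\color{red}m};{\color{cyan}t},{\color{magenta}q})$, a polynomial of degree ${\color{red}m}$ in ${\color{magenta}q}$, and similarly the even-index denominators are ${\color{magenta}\beta}({\color{red}m};{\color{cyan}t},{\color{magenta}q})-(-{\color{magenta}q})^{{\color{red}m}}(1+{\color{cyan}t}^{{\color{red}m}})$. With denominators restricted to degree~$1$ you will never match the Taylor expansion of $f$.

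As a minor organizational difference, the paper does not run the {\tt NextABC} algorithm directly on $({\color{blue}\gamma}-1)^{{\color{red}m}}$; it first establishes the $H$-fraction of $({\color{blue}\gamma}-1)^{{\color{red}m}}/{\color{magenta}q}^2$ (Lemma~\ref{lem:HFrac:Q2}) and then obtains Lemma~\ref{lem:HFrac:Q0} by prepending a single level using the quadratic equation from Corollary~\ref{cor:F}. A direct attack would work too, once you allow the higher-degree denominators.
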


\begin{Theorem}\label{th:main:Det1}
For  ${\color{red}m}\geq 1$ we have
\begin{align*}
{\color{purple}\Delta}^{(2{\color{red}m})}_{{\color{violet}i}+{\color{blue}j}+1-{\color{red}m}} 
&=\begin{cases}
	{\color{cyan}\xi}_1 {\color{cyan}t}^{{\color{red}m}^2{\color{purple}n}({\color{purple}n}-1)/2} , & \text{if ${\color{violet}N}={\color{red}m}{\color{purple}n}$}, \\
	0, & \text{otherwise}.
\end{cases}
\end{align*}
\end{Theorem}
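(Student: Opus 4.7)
The plan is to establish Theorem \ref{th:main:Det1} using a Hankel continued fraction (H-fraction) expansion of the generating function of the target sequence, following the \emph{guess-and-prove} strategy described in the abstract. Setting $\psi(t,q) = \gamma(t,q)-1$, the quadratic relation \eqref{eq:Narayana} factors as $\psi = qh$, where $h(t,q)$ satisfies $tq^{2}h^{2} - (1-(1+t)q)h + 1 = 0$ and admits the Jacobi continued fraction with constant coefficients $\beta_{i} = 1+t$ and $\alpha_{i} = t$. The sequence at hand is therefore generated by $(\gamma-1)^{m}/q = q^{m-1}h^{m}$. In the base case $m=1$ the theorem already follows from the classical Jacobi product formula $H_{n}(h) = \prod_{i=1}^{n-1}\alpha_{i} = t^{\binom{n}{2}}$, which matches $\xi_{1}\,t^{n(n-1)/2}$ with $\xi_{1}=1$.

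For $m \geq 2$ I would seek a Hankel continued fraction for $q^{m-1}h^{m}$ whose every stage has $q$-valuation exactly $m$; under the general theory of H-fractions, such a uniform step size is precisely what forces $\Delta^{(2m)}_{i+j+1-m}$ to vanish unless $N$ is a multiple of $m$. The scalar numerators $v_{j}$ of the successive stages should follow a geometric pattern in $t^{m}$, so that the product formula for Hankel determinants of an H-fraction accumulates into $t^{m^{2}\binom{n}{2}}$; the sign $(-1)^{nm(m-1)/2}$ will arise from the anti-triangular permutation reordering each size-$m$ block when the determinant is expanded. Theorem \ref{th:main:Det0} acts as a companion result and provides an independent cross-check via a Dodgson/Sylvester condensation identity relating adjacent shifted Hankel windows.

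The concrete program has three steps: (i) compute the numerators $v_{j}$ and the intermediate polynomials $u_{j}(q)$ (of degree less than $m$ in $q$) of the conjectured H-fraction from numerical data for small $m$ and small $j$, in order to detect their pattern; (ii) verify the H-fraction rigorously by showing that the tail at each level satisfies an appropriately shifted instance of the Narayana quadratic, which amounts to a polynomial identity in $\mathbb{Z}[t][[q]]$; (iii) conclude via the standard Hankel determinant product formula for H-fractions. The main obstacle is step (ii): although the closed-form answer is simple, the explicit formulas for $u_{j}(q)$ and $v_{j}$ follow no elementary pattern, and the polynomial identity verifying the expansion grows rapidly in complexity with $m$. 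This is precisely where the symbolic computation platform advertised in the abstract becomes indispensable, as it produces a rigorous algebraic verification that would be infeasible by hand.
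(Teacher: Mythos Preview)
Your overall framework---compute the H-fraction of $(\gamma(t,q)-1)^{m}/q$ and then read off the Hankel determinants via the product formula---is exactly what the paper does. However, you have misjudged the difficulty of this particular case, and in doing so you have overlooked the one observation that makes Theorem~\ref{th:main:Det1} the easiest of the four.

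Your claim that ``the explicit formulas for $u_{j}(q)$ and $v_{j}$ follow no elementary pattern'' is simply false here. Corollary~\ref{cor:F} with $m_{0}=1$ gives
\[
0 \;=\; -q^{m-1} \;+\; \beta(m;t,q)\,F \;-\; t^{m}q^{m+1}F^{2},
\qquad F=\dfrac{(\gamma(t,q)-1)^{m}}{q},
\]
which rewrites \emph{self-referentially} as
\[
F \;=\; \dfrac{q^{m-1}}{\beta(m;t,q) - t^{m}q^{m+1}F}.
\]
This is already the entire H-fraction: every level has $k_{j}=m-1$, $v_{j}=t^{m}$ (for $j\ge 1$), and $1+u_{j}(q)q=\beta(m;t,q)$, with no guessing and no symbolic verification required. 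Your step~(ii), which you flag as ``the main obstacle'' and as requiring the computer-algebra platform, is in this instance a one-line rearrangement. The heavy machinery you invoke is genuinely needed for the companion Theorems~\ref{th:main:Det2} and~\ref{th:main:Det3}, where the $v_{j}$ and $u_{j}$ do depend on $j$ in a nontrivial way; but transplanting that narrative onto the present theorem obscures the fact that it is essentially immediate once the quadratic for $(\gamma-1)^{m}$ is written down. The appeal to Dodgson condensation with Theorem~\ref{th:main:Det0} is likewise unnecessary and is not used in the paper.
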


\begin{Theorem}\label{th:main:Det2}
For ${\color{red}m}\geq 2$ we have
\begin{align*}
{\color{purple}\Delta}^{(2{\color{red}m})}_{{\color{violet}i}+{\color{blue}j}+2-{\color{red}m}} 
&=\begin{cases}
 (-1)^{{\color{red}m}{\color{purple}n}}{\color{cyan}\xi}_1    {\color{cyan}t}^{{\color{red}m}^2{\color{purple}n}({\color{purple}n}-1)/2} [{\color{purple}n}+1]_{{\color{cyan}t}^{\color{red}m}} , & \text{if ${\color{violet}N}={\color{red}m}{\color{purple}n}$}, \\
	(-1)^{{\color{red}m}{\color{purple}n}-{\color{red}m}-1} {\color{cyan}\xi}_1 {\color{cyan}t}^{{\color{red}m}({\color{purple}n}-1)({\color{red}m}{\color{purple}n}-2)/2}  [{\color{purple}n}]_{{\color{cyan}t}^{\color{red}m}} , & \text{if ${\color{violet}N}={\color{red}m}{\color{purple}n}-1$}, \\
	0, & \text{otherwise}.
\end{cases}
\end{align*}
\end{Theorem}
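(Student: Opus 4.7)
The plan is to apply the Hankel continued fraction (HCF) guess-and-prove methodology — the same machinery that is used for Theorems~\ref{th:main:Det0} and~\ref{th:main:Det1} — directly to the generating function
\begin{equation*}
H_m(t,q) \;=\; \frac{({\color{blue}\gamma}(t,q)-1)^m}{q^2},
\end{equation*}
whose Taylor coefficients are exactly the entries of the Hankel matrix appearing in Theorem~\ref{th:main:Det2}. Setting $u := {\color{blue}\gamma}-1$, the quadratic relation~\eqref{eq:Narayana} becomes
\begin{equation*}
tq\, u^2 + \bigl(q(1+t)-1\bigr)\,u + q \;=\; 0,
\end{equation*}
so $u^m = q^2 H_m$ satisfies an explicit algebraic equation over $\QQ(t,q)$ that will drive the verification of any conjectured HCF.

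Concretely, I would first compute the leading Hankel determinants of $H_m(t,q)$ symbolically for small $m$ and small $N$, and from that data guess an HCF of $H_m$ with block structure of period $m$ — the period suggested by the nonvanishing pattern $\{mn, mn-1\}$ in the statement. Each block should contribute a single polynomial factor in $t^m$ to the Hankel determinant, and the telescoping product over blocks should reproduce the two closed forms $(-1)^{mn}\xi_1\, t^{m^2 n(n-1)/2}\,[n+1]_{t^m}$ and $(-1)^{mn-m-1}\xi_1\, t^{m(n-1)(mn-2)/2}\,[n]_{t^m}$. Once the HCF is guessed, it is proved by substituting its truncations into the algebraic equation for $H_m$ and checking that the relation is satisfied identically — the lengthy but mechanical symbolic step that the abstract warns is handled by the author's program. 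The Hankel determinants then follow from the standard product formula for HCF convergents, with the two nonvanishing cases coming from the two "boundary" positions within each period-$m$ block.

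As an independent partial cross-check, the Desnanot--Jacobi identity
\begin{equation*}
D_N^{(0)}\, D_N^{(2)} - \bigl(D_N^{(1)}\bigr)^2 \;=\; D_{N+1}^{(0)}\, D_{N-1}^{(2)},
\end{equation*}
with $D_N^{(k)} := \Delta^{(2m)}_{i+j-m+k}$, combined with the formulas from Theorems~\ref{th:main:Det0} and~\ref{th:main:Det1}, already forces $D_{mn+1}^{(2)}=0$ and $D_{mn-2}^{(2)}=0$ and expresses $D_{mn}^{(2)}$ in terms of $D_{mn-1}^{(2)}$; but for $m\geq 5$ the identity is silent at many residues of $N$ modulo $m$, and it cannot extract $D_{mn-1}^{(2)}$ itself because all three terms of the identity vanish at $N=mn-1$. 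So the HCF input is unavoidable. The main obstacle is therefore twofold: correctly guessing the period-$m$ HCF block for $H_m$, which differs qualitatively from the HCFs used in the proofs of Theorems~\ref{th:main:Det0} and~\ref{th:main:Det1} because the nonvanishing window has shifted from $\{mn, mn+1\}$ to $\{mn, mn-1\}$; and executing the symbolic verification for arbitrary $m$, where the intermediate polynomial expressions in $t$ and $q$ grow rapidly and must be managed by the author's symbolic computation program.
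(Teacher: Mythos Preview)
Your plan is correct and matches the paper's approach: compute the $H$-fraction of $H_m=(\gamma-1)^m/q^2$ (this is Lemma~\ref{lem:HFrac:Q2}, proved via the \texttt{NextABC} iteration on the quadratic-equation coefficients in Lemma~\ref{lem:ABCq2}), then read off the determinants via Theorem~\ref{th:super2}(ii). One correction to your guess: the $H$-fraction has period~$2$ in its level index $j$, not period~$m$ --- the parameters are $k_{2j}=m-2$ and $k_{2j+1}=0$, so each pair of levels advances the determinant size $N$ by $m$, and it is this that produces exactly two nonvanishing residues $\{mn-1,\,mn\}$ per block of length $m$ in~$N$.
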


\begin{Theorem}\label{th:main:Det3}
For ${\color{red}m}\geq 3$ we have
\begin{align*}
{\color{purple}\Delta}^{(2{\color{red}m})}_{{\color{violet}i}+{\color{blue}j}+3-{\color{red}m}} 
&=\begin{cases}
	{\color{cyan}\xi}_1  {\color{cyan}t}^{{\color{red}m}^2 {\color{purple}n}({\color{purple}n}-1)/2 }   [{\color{purple}n}+1]^2_{{\color{cyan}t}^{\color{red}m}}  , & \text{if ${\color{violet}N}={\color{red}m}{\color{purple}n}$}, \\
	(-1)^{{\color{red}m}-1}{\color{cyan}\xi}_1 {\color{cyan}t}^{{\color{red}m}({\color{purple}n}-1)({\color{red}m}{\color{purple}n}-2)/2} {\color{teal}R}({\color{red}m}; {\color{cyan}t}, {\color{purple}n}-1) , & \text{if ${\color{violet}N}={\color{red}m}{\color{purple}n}-1$} ,\\
	-{\color{cyan}\xi}_1  {\color{cyan}t}^{{\color{red}m}({\color{purple}n}-1)({\color{red}m}{\color{purple}n}-4)/2}  [{\color{purple}n}]^2_{{\color{cyan}t}^{\color{red}m}}   , & \text{if ${\color{violet}N}={\color{red}m}{\color{purple}n}-2 $}, \\
	0, & \text{otherwise}.
\end{cases}
\end{align*}
where
\begin{align}
{\color{teal}R}({\color{red}m}; {\color{cyan}t},{\color{purple}n})
&=%\frac{{\color{red}m}(1-{\color{cyan}t}^{\color{red}m})}{1-{\color{cyan}t}} 
	{\color{red}m}[{\color{red}m}]_{\color{cyan}t}
	\left(
\sum_{{\color{violet}i}=0}^{{\color{purple}n}} \binom{{\color{violet}i}+2}{2}  {\color{cyan}t}^{{\color{red}m}{\color{violet}i}}
+\sum_{{\color{violet}i}={\color{purple}n}+1}^{2{\color{purple}n}} \binom{2{\color{purple}n}-{\color{violet}i}+2}{2} {\color{cyan}t}^{{\color{red}m}{\color{violet}i}}
\right).\label{def:Rn}
\end{align}

\end{Theorem}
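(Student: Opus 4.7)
The plan is to follow the same guess-and-prove philosophy used for Theorems \ref{th:main:Det0}, \ref{th:main:Det1}, \ref{th:main:Det2}. First I would derive a Hankel continued fraction (H-fraction) expansion for the generating function $F_m(t,q):=(\gamma(t,q)-1)^m/q^3$, and then read off the Hankel determinants $\Delta^{(2m)}_{i+j+3-m}$ from the H-fraction coefficients by the standard product formulas. Since $F_m$ is a power series for $m\geq 3$, the H-fraction machinery applies directly.

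The starting point is the quadratic relation \eqref{eq:Narayana}, from which one derives a closed polynomial functional equation for $F_m(t,q)$ (or, equivalently, for each of the auxiliary series that appear as iterated remainders of the H-fraction). I would guess the H-fraction coefficients for small values of $m$ using a symbolic computation, observe their block structure of period $m$ (one ``long'' jump every $m$ steps interleaved with $m-1$ ``short'' steps, as is already the pattern in Theorems \ref{th:main:Det0}--\ref{th:main:Det2}), and then prove the guess by induction on the depth of the fraction using the recursive elimination of the quadratic equation at each step. This gives a finite, checkable polynomial identity at each level, which is the part that is delegated to the companion computer-algebra program.

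Once the H-fraction is known, the three cases $N=mn$, $N=mn-1$, $N=mn-2$ correspond to truncating the fraction at the three admissible positions inside the block immediately before the $n$-th long jump; all other truncations lie strictly inside a long jump and hence produce the vanishing determinant. The powers of $t$ and the factors $[n+1]_{t^m}^2$ and $[n]_{t^m}^2$ are then immediate from the standard telescoping of the H-fraction product formula, exactly as in the $m=3$ case of Cigler's Conjecture \ref{conj:cigler76}.

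The main obstacle is the middle case $N=mn-1$, which produces the polynomial ${\color{teal}R}({\color{red}m};{\color{cyan}t},{\color{purple}n}-1)$ defined in \eqref{def:Rn}. Unlike $[n+1]_{t^m}$, this polynomial is not a product of cyclotomic-type factors, so its appearance must come from an algebraic identity specific to the ``numerator'' polynomial attached to the long jump. I expect the identity to take the shape
\[
m[m]_t\Bigl(\sum_{i=0}^{n}\binom{i+2}{2}t^{mi}+\sum_{i=n+1}^{2n}\binom{2n-i+2}{2}t^{mi}\Bigr)
\]
obtained by expanding a square-of-$[n+1]_{t^m}$-type factor that straddles the jump and combining it with the Narayana quadratic. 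Establishing this identity is the heavy symbolic step; after it is verified, comparing the result with the known initial data for small $n$ and $m$ completes the induction and yields the stated closed form.
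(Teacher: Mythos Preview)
Your overall strategy is the paper's: derive the quadratic equation for $(\gamma(t,q)-1)^m/q^3$ from Corollary~\ref{cor:F}, guess the $H$-fraction, verify the guess via Algorithm {\tt NextABC} with computer algebra, then extract the determinants through Theorem~\ref{th:super2}(ii). However, your structural guess about the $H$-fraction is wrong: the block period is~$3$, not~$m$. The actual pattern (Lemma~\ref{lem:HFrac:Q3}) is $k_{3j}=m-3$, $k_{3j+1}=k_{3j+2}=0$, so each period of $(s_j)$ hits exactly the three sizes $mn-2,\,mn-1,\,mn$ --- which is why the theorem has three nonzero cases, not $m$. This continues the pattern of Lemmas~\ref{lem:HFrac:Q0}--\ref{lem:HFrac:Q2}, where the shifts by $0,1,2$ give periods $2,1,2$ respectively; the period tracks the number of nonzero residues modulo~$m$, not~$m$ itself.

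Your account of where $R(m;t,n)$ enters is also off. It does not emerge at the determinant stage by ``expanding a square-of-$[n+1]_{t^m}$-type factor''; rather, $R(j)$ is already a constituent of the $H$-fraction coefficients $v_{3j},\,v_{3j+1},\,v_{3j+2}$ and of $u_{3j},\,u_{3j+2}$ (Lemma~\ref{lem:HFrac:Q3}), and it likewise appears --- in both numerators and denominators --- in the intermediate polynomials $A_n,B_n,C_n$ of Lemma~\ref{lem:ABCq3}. The heavy symbolic step is therefore not an identity linking $R$ to $[n+1]_{t^m}^2$, but the verification that those explicit $A_n,B_n,C_n$ (which involve $R(j)$, $S(j)$, $\alpha(q)$ and several auxiliary rational functions) satisfy the three {\tt NextABC} relations \eqref{eqAA}--\eqref{eqCC}. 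Once that is done, the product formula \eqref{eq:HankelDetFundamental} telescopes cleanly because $v_{3i+1}v_{3i+2}v_{3i+3}=t^m$, and $R(j)$ survives only in the middle case $N=mn-1$ through the single factor $v_{3j+1}$.
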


We see that Theorem \ref{th:main:Det2} implies Theorem 7.4 of Cigler \eqref{th:Cigler7.4}
when ${\color{red}m}=2$, and Theorem \ref{th:main:Det3} 
confirmes his conjecture 7.6 \eqref{conj:Cigler} when ${\color{red}m}=3$.

\bigskip

Our method is essentially a “guess-and-prove” approach that relies on established techniques involving Hankel continued fractions. While the final forms of our theorems are quite simple and closed, the proof process itself forced us to guess and manipulate extremely large and intricate explicit expressions. For instance, the complete expression for ${\color{cyan}C}_{3{\color{blue}j}}$ given in Lemma \ref{lem:ABCq3} is
{
\scriptsize
\begin{align*}
	{\color{cyan}C}_{3{\color{blue}j}}&=
	-  {\color{magenta}q}^2 \Bigl(1+ \frac{[{\color{blue}j}]_{{\color{cyan}t}^{\color{red}m}}  [{\color{blue}j}+1]_{{\color{cyan}t}^{\color{red}m}} {\color{magenta}q}}{{\color{teal}R}({\color{red}m}; {\color{cyan}t}, {\color{blue}j}-1)} \Bigr)  
	 \sum_{{\color{violet}d}=0}^{{\color{red}m}-2} 
\frac{ (-{\color{magenta}q})^{\color{violet}d} {\color{red}m}  }{{\color{red}m}-{\color{violet}d}} \sum_{{\color{violet}i}=0}^{{\color{violet}d}} \binom{{\color{red}m}-1-{\color{violet}d}+{\color{violet}i}}{{\color{violet}i}} \binom{ {\color{red}m}-1-{\color{violet}i}}{{\color{violet}d}-{\color{violet}i}}  {\color{cyan}t}^{\color{violet}i}\\
& \quad -  \frac{{\color{teal}R}({\color{red}m}; {\color{cyan}t}, {\color{blue}j})}{[{\color{blue}j}+1]_{{\color{cyan}t}^{\color{red}m}}^2   }   (-{\color{magenta}q})^{{\color{red}m}+1}
 + \Bigl( {\color{red}m}[{\color{red}m}]_{\color{cyan}t} \frac{[{\color{blue}j}]_{{\color{cyan}t}^{\color{red}m}}  [{\color{blue}j}+1]_{{\color{cyan}t}^{\color{red}m}}  }{{\color{teal}R}({\color{red}m}; {\color{cyan}t}, {\color{blue}j}-1)} - \frac{(1+{\color{cyan}t}^{{\color{red}m}({\color{blue}j}+1)})}{[{\color{blue}j}+1]_{{\color{cyan}t}^{\color{red}m}}  } \Bigr) (-{\color{magenta}q})^{{\color{red}m}+2}\\
& \quad - \frac{{\color{cyan}t}^{{\color{red}m}{\color{blue}j}}}{{\color{teal}R}({\color{red}m}; {\color{cyan}t}, {\color{blue}j}-1)}  (-{\color{magenta}q})^{{\color{red}m}+3},
\end{align*}
}%
where ${\color{teal}R}({\color{red}m}; {\color{cyan}t}, {\color{purple}n})$ is defined in \eqref{def:Rn}. Observe that in this formula, the terms ${\color{teal}R}({\color{red}m}; {\color{cyan}t},{\color{purple}n})$ appear both in numerators and in denominators.

\medskip
To establish our main theorems, we use the Hankel continued fraction approach developed in \cite{Han2016Adv}. The underlying idea is recalled in Section~\ref{sec:hfrac}.  
Our strategy proceeds as follows:

\medskip

1. Derive a quadratic equation for each of the series
$$
\frac{({\color{cyan}C}({\color{cyan}t},{\color{magenta}q})-1)^{\color{red}m}}{{\color{magenta}q}^2} \quad \text{and} \quad
\frac{({\color{cyan}C}({\color{cyan}t},{\color{magenta}q})-1)^{\color{red}m}}{{\color{magenta}q}^3}.
$$
To this end, we obtain a general formula in Section \ref{sec:quadra}, Corollary~\ref{cor:F}.

\smallskip

2. Apply Algorithm \hbox{\tt NextABC} to the initial coefficients appearing in the quadratic equations derived above. This produces the initial terms of a sequence of six-tuples \eqref{sixtuples}
\begin{equation*}
({\color{teal}A}_{{\color{purple}n}+1}, {\color{magenta}B}_{{\color{purple}n}+1}, {\color{cyan}C}_{{\color{purple}n}+1}; {\color{olive}k}_{{\color{purple}n}}, {\color{purple}a}_{{\color{purple}n}}, {\color{cyan}D}_{{\color{purple}n}}).
\end{equation*}
Based on these initial terms, we formulate a conjectural closed-form description of the sequence; see Lemmas \ref{lem:ABCq2} and \ref{lem:ABCq3}.

\smallskip

3. Prove that the conjectured formulas are correct, i.e., that they satisfy the relations specified in Algorithm \hbox{\tt NextABC}, in Sections \ref{sec:Q2} and~\ref{sec:Q3}.  
Because the proofs are very long and technically involved, we reproduce only the more accessible part in this paper.  
The more intricate computational verifications are carried out by computer; the programs are available on my homepage at
$$\hbox{\tt https://irma.math.unistra.fr/\char126guoniu/narayana.html}$$
We emphasize that our program delivers rigorous symbolic proofs, rather than merely verifying the initial terms.
\medskip

4. Construct the Hankel continued fraction from the resulting sequence of quantities; see Section \ref{sec:hfrac}, Lemmas \ref{lem:HFrac:Q0}–\ref{lem:HFrac:Q3}.

\medskip

5. Finally, compute the Hankel determinants from the Hankel continued fractions. This is carried out in Section \ref{sec:proofmain}.

% >>>

\section{Notations and properties}\label{sec:notations} % <<<
Since the expressions we obtain are rather lengthy, we first introduce some notation to simplify the exposition.
\begin{Definition}\label{def:rho}
For ${\color{violet}d}=0,1,2,\ldots, {\color{red}m}$, define
\begin{align*}
{\color{cyan}\rho}({\color{red}m}; {\color{cyan}t}, {\color{violet}d})&=  
 \sum_{{\color{violet}i}=0}^{{\color{violet}d}} \frac{{\color{red}m}({\color{red}m}-{\color{violet}d})}{({\color{red}m}-{\color{violet}i})({\color{red}m}-{\color{violet}d}+{\color{violet}i})} \binom{{\color{red}m}-{\color{violet}d}+{\color{violet}i}}{{\color{violet}i}} \binom{ {\color{red}m}-{\color{violet}i}}{{\color{violet}d}-{\color{violet}i}}  {\color{cyan}t}^{\color{violet}i},\\
{\color{olive}S}({\color{red}m}; {\color{cyan}t}, {\color{purple}n})&=\frac{{\color{red}m}(1-{\color{cyan}t}^{\color{red}m})}{1-{\color{cyan}t}} \left(
\sum_{{\color{violet}i}=0}^{{\color{purple}n}} ({\color{violet}i}+1)^2 {\color{cyan}t}^{{\color{red}m}{\color{violet}i}}
+\sum_{{\color{violet}i}={\color{purple}n}+1}^{2{\color{purple}n}} (2{\color{purple}n}-{\color{violet}i}+1)^2 {\color{cyan}t}^{{\color{red}m}{\color{violet}i}}
\right),\\
{\color{magenta}\beta}({\color{red}m}; {\color{cyan}t}, {\color{magenta}q}) &=\sum_{{\color{violet}d}=0}^{{\color{red}m}} {\color{cyan}\rho}({\color{red}m}; {\color{cyan}t}, {\color{violet}d})(-{\color{magenta}q})^{\color{violet}d}\\
&=1 -{\color{red}m}({\color{cyan}t}+1){\color{magenta}q} + \bigl(\binom{{\color{red}m}}{2} +{\color{red}m}({\color{red}m}-2){\color{cyan}t} + \binom{{\color{red}m}}{2}{\color{cyan}t}^2\bigr){\color{magenta}q}^2 +\cdots\\
& \quad +  \frac{{\color{red}m}(1-{\color{cyan}t}^{\color{red}m})}{1-{\color{cyan}t}} (-{\color{magenta}q})^{{\color{red}m}-1} +  (1+{\color{cyan}t}^{\color{red}m})(-{\color{magenta}q})^{\color{red}m}.
\end{align*}

We also set
\begin{equation*}
{\color{teal}\alpha}({\color{red}m}; {\color{cyan}t},{\color{magenta}q}) =  \sum_{{\color{violet}d}=0}^{{\color{red}m}-2} {\color{cyan}\rho}({\color{red}m}; {\color{cyan}t},{\color{violet}d})(-{\color{magenta}q})^{\color{violet}d},
\end{equation*}
so that
$$
{\color{magenta}\beta}({\color{red}m}; {\color{cyan}t},{\color{magenta}q}) - {\color{teal}\alpha}({\color{red}m}; {\color{cyan}t},{\color{magenta}q}) = \frac{{\color{red}m}(1-{\color{cyan}t}^{\color{red}m})}{1-{\color{cyan}t}} (-{\color{magenta}q})^{{\color{red}m}-1} +  (1+{\color{cyan}t}^{\color{red}m})(-{\color{magenta}q})^{\color{red}m}.
$$
\end{Definition}
For brevity, we write ${\color{cyan}\rho}({\color{violet}d})={\color{cyan}\rho}({\color{red}m}; {\color{cyan}t},{\color{violet}d}), {\color{teal}R}({\color{purple}n})={\color{teal}R}({\color{red}m};{\color{cyan}t},{\color{purple}n}), {\color{olive}S}({\color{purple}n})={\color{olive}S}({\color{red}m};{\color{cyan}t},{\color{purple}n}), {\color{magenta}\beta}({\color{magenta}q})={\color{magenta}\beta}({\color{red}m}; {\color{cyan}t}, {\color{magenta}q}), {\color{teal}\alpha}({\color{magenta}q}) = {\color{teal}\alpha}({\color{red}m}; {\color{cyan}t},{\color{magenta}q})$. 

\medskip

{\bf Remark}.  
When ${\color{violet}d} = {\color{red}m}$, we obtain ${\color{cyan}\rho}({\color{red}m};{\color{cyan}t},{\color{red}m}) = 1 + {\color{cyan}t}^{\color{red}m}$. For ${\color{violet}d} = 0,1,2,\ldots, {\color{red}m}-1$, the function ${\color{cyan}\rho}({\color{red}m};{\color{cyan}t},{\color{violet}d})$ can be expressed as
\begin{equation*}
{\color{cyan}\rho}({\color{red}m}; {\color{cyan}t},{\color{violet}d}) =  
\frac{{\color{red}m}}{{\color{red}m}-{\color{violet}d}} \sum_{{\color{violet}i}=0}^{{\color{violet}d}} \binom{{\color{red}m}-1-{\color{violet}d}+{\color{violet}i}}{{\color{violet}i}} \binom{{\color{red}m}-1-{\color{violet}i}}{{\color{violet}d}-{\color{violet}i}} {\color{cyan}t}^{\color{violet}i}.
\end{equation*}

Recall that the {\it Lucas polynomials} are given by
${\color{cyan}L}_0({\color{olive}x},{\color{teal}s})=2$ and, for ${\color{red}m}\geq 1$,
\begin{equation*}
{\color{cyan}L}_{\color{red}m}({\color{olive}x},{\color{teal}s})= 
\sum_{{\color{violet}i}=0}^{\lfloor {\color{red}m}/2\rfloor} 
\binom{{\color{red}m}-{\color{violet}i}}{{\color{violet}i}} \frac{{\color{red}m}}{{\color{red}m}-{\color{violet}i}} 
{\color{teal}s}^{{\color{violet}i}} {\color{olive}x}^{{\color{red}m}-2{\color{violet}i}}.
\end{equation*}
They satisfy 
\begin{equation}\label{Lucas:rec}
{\color{cyan}L}_{\color{red}m}({\color{olive}x},{\color{teal}s}) = {\color{olive}x} {\color{cyan}L}_{{\color{red}m}-1}({\color{olive}x},{\color{teal}s}) + {\color{teal}s} {\color{cyan}L}_{{\color{red}m}-2} ({\color{olive}x},{\color{teal}s})
\end{equation}
with initial values ${\color{cyan}L}_0({\color{olive}x},{\color{teal}s})=2$ and ${\color{cyan}L}_1({\color{olive}x},{\color{teal}s})={\color{olive}x}$.

\begin{Lemma}\label{lem:beta:Lm}
We have
$$
{\color{magenta}\beta}({\color{red}m};{\color{cyan}t},{\color{magenta}q}) = {\color{cyan}L}_{\color{red}m}(1-{\color{magenta}q}-{\color{cyan}t}{\color{magenta}q},-{\color{cyan}t}{\color{magenta}q}^2).
$$
\end{Lemma}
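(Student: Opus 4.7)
The plan is to verify that ${\color{magenta}\beta}({\color{red}m};{\color{cyan}t},{\color{magenta}q})$ satisfies the very same recurrence and initial data as ${\color{cyan}L}_{\color{red}m}(1-{\color{magenta}q}-{\color{cyan}t}{\color{magenta}q},-{\color{cyan}t}{\color{magenta}q}^2)$, so that induction on ${\color{red}m}$ closes the case. The Lucas recurrence \eqref{Lucas:rec}, evaluated at ${\color{olive}x}=1-{\color{magenta}q}-{\color{cyan}t}{\color{magenta}q}$ and ${\color{teal}s}=-{\color{cyan}t}{\color{magenta}q}^2$, reads
\[
{\color{cyan}L}_{\color{red}m}=(1-{\color{magenta}q}-{\color{cyan}t}{\color{magenta}q})\,{\color{cyan}L}_{{\color{red}m}-1}-{\color{cyan}t}{\color{magenta}q}^2\,{\color{cyan}L}_{{\color{red}m}-2}.
\]
The base cases are immediate from Definition \ref{def:rho}: ${\color{cyan}\rho}(0;{\color{cyan}t},0)=1+{\color{cyan}t}^0=2$ yields ${\color{magenta}\beta}(0;{\color{cyan}t},{\color{magenta}q})=2={\color{cyan}L}_0$, while ${\color{cyan}\rho}(1;{\color{cyan}t},0)=1$ and ${\color{cyan}\rho}(1;{\color{cyan}t},1)=1+{\color{cyan}t}$ give ${\color{magenta}\beta}(1;{\color{cyan}t},{\color{magenta}q})=1-{\color{magenta}q}-{\color{cyan}t}{\color{magenta}q}={\color{cyan}L}_1$.

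For the inductive step I would prove
\[
{\color{magenta}\beta}({\color{red}m};{\color{cyan}t},{\color{magenta}q})=(1-{\color{magenta}q}-{\color{cyan}t}{\color{magenta}q})\,{\color{magenta}\beta}({\color{red}m}-1;{\color{cyan}t},{\color{magenta}q})-{\color{cyan}t}{\color{magenta}q}^2\,{\color{magenta}\beta}({\color{red}m}-2;{\color{cyan}t},{\color{magenta}q})
\]
by extracting the coefficient of $(-{\color{magenta}q})^{\color{violet}d}$ on each side. Using $1-{\color{magenta}q}-{\color{cyan}t}{\color{magenta}q}=1+(1+{\color{cyan}t})(-{\color{magenta}q})$ and the elementary identity ${\color{magenta}q}^2(-{\color{magenta}q})^{\color{violet}d}=(-{\color{magenta}q})^{{\color{violet}d}+2}$, the right-hand side contributes ${\color{cyan}\rho}({\color{red}m}-1;{\color{cyan}t},{\color{violet}d})+(1+{\color{cyan}t})\,{\color{cyan}\rho}({\color{red}m}-1;{\color{cyan}t},{\color{violet}d}-1)-{\color{cyan}t}\,{\color{cyan}\rho}({\color{red}m}-2;{\color{cyan}t},{\color{violet}d}-2)$ to the coefficient of $(-{\color{magenta}q})^{\color{violet}d}$, and the whole inductive step reduces to the single pointwise identity
\[
{\color{cyan}\rho}({\color{red}m};{\color{cyan}t},{\color{violet}d})={\color{cyan}\rho}({\color{red}m}-1;{\color{cyan}t},{\color{violet}d})+(1+{\color{cyan}t})\,{\color{cyan}\rho}({\color{red}m}-1;{\color{cyan}t},{\color{violet}d}-1)-{\color{cyan}t}\,{\color{cyan}\rho}({\color{red}m}-2;{\color{cyan}t},{\color{violet}d}-2),\quad 0\le{\color{violet}d}\le{\color{red}m},
\]
with the convention that ${\color{cyan}\rho}$ vanishes outside its natural range.

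This ${\color{cyan}\rho}$-identity is the real obstacle. For $0\le{\color{violet}d}\le{\color{red}m}-2$ I would substitute the clean form ${\color{cyan}\rho}({\color{red}m};{\color{cyan}t},{\color{violet}d})=\frac{{\color{red}m}}{{\color{red}m}-{\color{violet}d}}\sum_{\color{violet}i}\binom{{\color{red}m}-1-{\color{violet}d}+{\color{violet}i}}{{\color{violet}i}}\binom{{\color{red}m}-1-{\color{violet}i}}{{\color{violet}d}-{\color{violet}i}}\,{\color{cyan}t}^{\color{violet}i}$ from the Remark into each of the four occurrences of ${\color{cyan}\rho}$, equate the coefficient of ${\color{cyan}t}^{\color{violet}i}$, and reduce the resulting identity among products of two binomial coefficients via two applications of Pascal's rule. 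The boundary values ${\color{violet}d}={\color{red}m}-1$ and ${\color{violet}d}={\color{red}m}$ (where that closed form degenerates) would be handled separately using ${\color{cyan}\rho}({\color{red}m};{\color{cyan}t},{\color{red}m})=1+{\color{cyan}t}^{\color{red}m}$ and ${\color{cyan}\rho}({\color{red}m};{\color{cyan}t},{\color{red}m}-1)={\color{red}m}(1-{\color{cyan}t}^{\color{red}m})/(1-{\color{cyan}t})$; as a sanity check, at ${\color{violet}d}={\color{red}m}$ the right-hand side is $(1+{\color{cyan}t})(1+{\color{cyan}t}^{{\color{red}m}-1})-{\color{cyan}t}(1+{\color{cyan}t}^{{\color{red}m}-2})=1+{\color{cyan}t}^{\color{red}m}$, as required. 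Since the identity is hypergeometric in ${\color{violet}i}$, it could also be certified mechanically by the Wilf--Zeilberger method.

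A conceptually cleaner alternative starts from the quadratic \eqref{eq:Narayana}: setting ${\color{teal}w}={\color{blue}\gamma}({\color{cyan}t},{\color{magenta}q})-1$ and substituting produces ${\color{cyan}t}{\color{magenta}q}\,{\color{teal}w}^2-(1-{\color{magenta}q}-{\color{cyan}t}{\color{magenta}q}){\color{teal}w}+{\color{magenta}q}=0$, equivalently ${\color{cyan}t}{\color{magenta}q}\,{\color{teal}w}+{\color{magenta}q}/{\color{teal}w}=1-{\color{magenta}q}-{\color{cyan}t}{\color{magenta}q}$. Hence ${\color{cyan}t}{\color{magenta}q}\,{\color{teal}w}$ and ${\color{magenta}q}/{\color{teal}w}$ are the two roots of ${\color{olive}z}^2-(1-{\color{magenta}q}-{\color{cyan}t}{\color{magenta}q}){\color{olive}z}+{\color{cyan}t}{\color{magenta}q}^2$, and Binet's formula for Lucas polynomials gives directly ${\color{cyan}L}_{\color{red}m}(1-{\color{magenta}q}-{\color{cyan}t}{\color{magenta}q},-{\color{cyan}t}{\color{magenta}q}^2)=({\color{cyan}t}{\color{magenta}q}({\color{blue}\gamma}-1))^{\color{red}m}+({\color{magenta}q}/({\color{blue}\gamma}-1))^{\color{red}m}$. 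Recognising this closed form as the binomial sum defining ${\color{magenta}\beta}({\color{red}m};{\color{cyan}t},{\color{magenta}q})$ is nonetheless equivalent to the same ${\color{cyan}\rho}$-identity, so in the end the Pascal-type expansion appears unavoidable.
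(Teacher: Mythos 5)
Your strategy is sound and genuinely different from the paper's. The paper proves the identity by brute-force coefficient extraction: it expands $L_m(1-q-tq,-tq^2)$ via the defining sum for Lucas polynomials, isolates the coefficient of $q^d t^j$, and evaluates the resulting single alternating sum $\sum_i \binom{j}{i}\binom{m-i-1}{d-i-j}(-1)^i=\binom{m-1-j}{d-j}$ in closed form (citing the ``Method of Coefficients''), matching it against $\rho(m;t,d)$; the boundary $d=m$ is treated separately. You instead induct on $m$ through the three-term Lucas recurrence, which trades that evaluation for the five-term Pascal-type recurrence $\rho(m;t,d)=\rho(m-1;t,d)+(1+t)\rho(m-1;t,d-1)-t\,\rho(m-2;t,d-2)$. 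Your reduction is correct: the base cases, the translation of the recurrence into coefficients of $(-q)^d$, and the boundary checks at $d=m$ and $d=m-1$ (the latter amounts to $m[m]_t=(1+t^{m-1})+(1+t)(m-1)[m-1]_t-t(m-2)[m-2]_t$, which holds) all work, and the $\rho$-recurrence is indeed true. What you have not done is prove that recurrence in the range $0\le d\le m-2$; because of the rational prefactors $\tfrac{m}{m-d}$ versus $\tfrac{m-1}{m-1-d}$ and $\tfrac{m-2}{m-d}$ attached to the four binomial products, this is more than ``two applications of Pascal's rule'' --- it is a genuine five-term identity that needs either a careful hand computation or the WZ certification you mention. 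So the hardest single step is deferred in both proofs, and of comparable weight; the paper's version has the advantage of producing one clean Vandermonde-type sum, while yours avoids the double coefficient extraction in $q$ and $t$ and stays closer to the recursive structure of the Lucas polynomials. Your Binet-formula observation, that $tqw$ and $q/w$ (with $w=\gamma-1$) are the roots of $z^2-(1-q-tq)z+tq^2$, is correct and illuminating --- it explains \emph{why} the lemma should be true and connects it to Corollary~\ref{cor:F} --- and you are right that it does not by itself discharge the combinatorial identification of $\beta$ with the power sum.
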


\begin{proof}
Consider the right-hand side of the claimed identity:
\begin{align*}
\RHS 
&=\sum_{{\color{violet}i}=0}^{\lfloor {\color{red}m}/2\rfloor} 
\binom{{\color{red}m}-{\color{violet}i}}{{\color{violet}i}} \frac{{\color{red}m}}{{\color{red}m}-{\color{violet}i}} 
(1-{\color{magenta}q}-{\color{cyan}t}{\color{magenta}q})^{{\color{red}m}-2{\color{violet}i}}(-{\color{cyan}t}{\color{magenta}q}^2)^{{\color{violet}i}}\\
&=\sum_{{\color{violet}i}=0}^{\lfloor {\color{red}m}/2\rfloor} 
\binom{{\color{red}m}-{\color{violet}i}}{{\color{violet}i}} \frac{{\color{red}m}}{{\color{red}m}-{\color{violet}i}} 
\sum_{\ell=0}^{{\color{red}m}-2{\color{violet}i}} \binom{{\color{red}m}-2{\color{violet}i}}{\ell} (-(1+{\color{cyan}t}){\color{magenta}q})^\ell (-{\color{cyan}t}{\color{magenta}q}^2)^{{\color{violet}i}}.
\end{align*}
For $0\le {\color{violet}d}\le {\color{red}m}$ and $0\le {\color{blue}j}\le {\color{violet}d}$, we extract the coefficient of ${\color{magenta}q}^{\color{violet}d} {\color{cyan}t}^{\color{blue}j}$ from the above expression. First,
\begin{align*}
[{\color{magenta}q}^{\color{violet}d}]\RHS 
&=\sum_{{\color{violet}i}=0}^{\lfloor {\color{violet}d}/2\rfloor} 
\binom{{\color{red}m}-{\color{violet}i}}{{\color{violet}i}} \frac{{\color{red}m}}{{\color{red}m}-{\color{violet}i}} 
\binom{{\color{red}m}-2{\color{violet}i}}{{\color{violet}d}-2{\color{violet}i}} (-(1+{\color{cyan}t}))^{{\color{violet}d}-2{\color{violet}i}} (-{\color{cyan}t})^{{\color{violet}i}}\\
&=(-1)^{\color{violet}d}\sum_{{\color{violet}i}=0}^{\lfloor {\color{violet}d}/2\rfloor} 
\binom{{\color{red}m}-{\color{violet}i}}{{\color{violet}i}} \frac{{\color{red}m}}{{\color{red}m}-{\color{violet}i}} 
\binom{{\color{red}m}-2{\color{violet}i}}{{\color{violet}d}-2{\color{violet}i}} \sum_{\ell=0}^{{\color{violet}d}-2{\color{violet}i}} \binom{{\color{violet}d}-2{\color{violet}i}}{\ell} {\color{cyan}t}^\ell (-{\color{cyan}t})^{\color{violet}i}.
\end{align*}

To obtain $[{\color{magenta}q}^{\color{violet}d} {\color{cyan}t}^{\color{blue}j}]\RHS$, set $\ell = {\color{blue}j}-{\color{violet}i}$ in the last sum.
The condition $0\le \ell \le {\color{violet}d}-2{\color{violet}i}$ becomes
$0\le {\color{blue}j}-{\color{violet}i} \le {\color{violet}d}-2{\color{violet}i}$, i.e., ${\color{violet}i}\le {\color{blue}j}\le {\color{violet}d}-{\color{violet}i}$. Hence ${\color{violet}i}\le {\color{blue}j}$ and ${\color{violet}i}\le {\color{violet}d}-{\color{blue}j}$,
and in particular ${\color{violet}i}\le \lfloor {\color{violet}d}/2\rfloor$.
Therefore,
\begin{align}
[{\color{magenta}q}^{\color{violet}d} {\color{cyan}t}^{\color{blue}j}] \RHS 
&=(-1)^{\color{violet}d}\sum_{{\color{violet}i}=0}^{\min\{{\color{blue}j}, {\color{violet}d}-{\color{blue}j}\}} 
\binom{{\color{red}m}-{\color{violet}i}}{{\color{violet}i}} \frac{{\color{red}m}}{{\color{red}m}-{\color{violet}i}} 
\binom{{\color{red}m}-2{\color{violet}i}}{{\color{violet}d}-2{\color{violet}i}} \binom{{\color{violet}d}-2{\color{violet}i}}{{\color{blue}j}-{\color{violet}i}} (-1)^{\color{violet}i}. \label{RHS}
\end{align}

We now check that these coefficients coincide with the corresponding coefficients in ${\color{magenta}\beta}({\color{red}m};{\color{cyan}t},{\color{magenta}q})$.
For ${\color{violet}d}\le {\color{red}m}-1$, we have
\begin{align*}
[{\color{magenta}q}^{\color{violet}d} {\color{cyan}t}^{\color{blue}j}] \RHS 
&=(-1)^{\color{violet}d} \frac{ {\color{red}m}({\color{red}m}-1+{\color{blue}j}-{\color{violet}d})!}{{\color{blue}j}!({\color{red}m}-{\color{violet}d})!}  
\sum_{{\color{violet}i}=0}^{\min\{{\color{blue}j}, {\color{violet}d}-{\color{blue}j}\}} 
\binom{{\color{blue}j}}{{\color{violet}i}} 
\binom{{\color{red}m}-{\color{violet}i}-1}{{\color{violet}d}-{\color{violet}i}-{\color{blue}j}} (-1)^{\color{violet}i} \\
&=(-1)^{\color{violet}d} \frac{ {\color{red}m}({\color{red}m}-1+{\color{blue}j}-{\color{violet}d})!}{{\color{blue}j}!({\color{red}m}-{\color{violet}d})!}  
\binom{{\color{red}m}-1-{\color{blue}j}}{{\color{violet}d}-{\color{blue}j}}\\
&=[{\color{magenta}q}^{\color{violet}d} {\color{cyan}t}^{\color{blue}j}]{\color{cyan}\rho}({\color{red}m}; {\color{cyan}t},{\color{violet}d})(-{\color{magenta}q})^{\color{violet}d},
\end{align*}
	where the sum over ${\color{violet}i}$ is evaluated using ``{T}he {M}ethode of {C}oefficients"; see \cite{Gessel2008slide}.

If ${\color{violet}d}={\color{red}m}$, then from \eqref{RHS} we obtain $[{\color{magenta}q}^{\color{red}m} {\color{cyan}t}^0]\RHS = [{\color{magenta}q}^{\color{red}m} {\color{cyan}t}^{\color{red}m}]\RHS = (-1)^{\color{red}m}$.
For ${\color{violet}d}={\color{red}m}$ and $1\le {\color{blue}j}\le {\color{red}m}-1$, \eqref{RHS} specializes to
\begin{align*}
[{\color{magenta}q}^{\color{red}m} {\color{cyan}t}^{\color{blue}j}] \RHS 
&=(-1)^{\color{red}m} \frac{{\color{red}m}}{{\color{blue}j}}  
\sum_{{\color{violet}i}=0}^{\min\{{\color{blue}j}, {\color{red}m}-{\color{blue}j}\}} 
\binom{{\color{blue}j}}{{\color{violet}i}} 
\binom{{\color{red}m}-{\color{violet}i}-1}{{\color{red}m}-{\color{violet}i}-{\color{blue}j}} (-1)^{\color{violet}i} \\
&=(-1)^{\color{red}m} \frac{{\color{red}m}}{{\color{blue}j}}  
\binom{{\color{red}m}-1-{\color{blue}j}}{{\color{red}m}-{\color{blue}j}} = 0. \qedhere
\end{align*}
\end{proof}

Moreover, the quantities ${\color{olive}S}({\color{blue}j})$ and ${\color{teal}R}({\color{blue}j})$ are connected through the following relationships:
\begin{align*}
\frac{{\color{red}m}(1-{\color{cyan}t}^{\color{red}m})}{1-{\color{cyan}t}} 
&= \frac{2 {\color{teal}R}({\color{blue}j}) -{\color{olive}S}({\color{blue}j})   }{[{\color{blue}j}+1]_{{\color{cyan}t}^{\color{red}m}}^2}
	=  \frac{-	2{\color{cyan}t}^{\color{red}m} {\color{teal}R}({\color{blue}j}) +{\color{olive}S}({\color{blue}j}+1)}{[{\color{blue}j}+2]_{{\color{cyan}t}^{\color{red}m}}^2}\\
	&=  \frac{-	2{\color{cyan}t}^{\color{red}m} {\color{teal}R}({\color{blue}j}-1) +{\color{olive}S}({\color{blue}j})}{[{\color{blue}j}+1]_{{\color{cyan}t}^{\color{red}m}}^2}
=  \frac{{\color{teal}R}({\color{blue}j}) -	{\color{cyan}t}^{\color{red}m} {\color{teal}R}({\color{blue}j}-1) }{[{\color{blue}j}+1]_{{\color{cyan}t}^{\color{red}m}}^2},\\
{\color{olive}S}({\color{blue}j}) &= {\color{teal}R}({\color{blue}j}) + {\color{cyan}t}^{\color{red}m} {\color{teal}R}({\color{blue}j}-1).
\end{align*}
\medskip

Throughout this paper, we adopt the following rule, referred to as the ``{\it index convention}" \cite{Han2020Trans}.  
It stipulates that, in any expression defined by cases, each formula applies only to those integer indices that have not already appeared as special values. 
For instant, in Lemma \ref{lem:HFrac:Q0}, the formula for ${\color{red}v}_{2{\color{blue}j}+1}$ is applicable for ${\color{blue}j} \geq 1$, but not for ${\color{blue}j} = 0$, because ${\color{red}v}_1$ has already been explicitly specified earlier.

% >>>

\section{Hankel continued fractions}\label{sec:hfrac} % <<<

The Jacobi continued fraction is a useful tool for evaluating Hankel determinants when all of them are nonzero. Since, in our case, some Hankel determinants vanish, we instead have to use the so-called “Hankel continued fractions” developed in \cite{Han2016Adv}. For further references, see \cite{Krattenthaler1998, Cigler2013spe, Cigler2019, Han2020Trans}. We now briefly recall the definition and the basic properties of Hankel continued fractions from \cite{Han2016Adv}.

\begin{Definition}\label{def:super}
For each positive integer ${\color{violet}\delta}$, a {\it super continued fraction} associated with ${\color{violet}\delta}$, called {\it super ${\color{violet}\delta}$-fraction} for short, is defined to be a continued fraction of the following form
\begin{equation}\label{eq:super}
{\color{blue}F}({\color{magenta}q})=
\cFrac{{\color{red}v}_0 {\color{magenta}q}^{{\color{olive}k}_0}}{ 1+{\color{red}u}_1({\color{magenta}q}) {\color{magenta}q}} 
- \cFrac{ {\color{red}v}_1 {\color{magenta}q}^{{\color{olive}k}_0+{\color{olive}k}_1+{\color{violet}\delta}}}{ 1+{\color{red}u}_2({\color{magenta}q}){\color{magenta}q}}
- \cFrac{ {\color{red}v}_2 {\color{magenta}q}^{{\color{olive}k}_1+{\color{olive}k}_2+{\color{violet}\delta}}}{ 1+{\color{red}u}_3({\color{magenta}q}){\color{magenta}q}}
- {\cdots} 
\end{equation}
where ${\color{red}v}_{\color{blue}j}\not=0$ are constants, ${\color{olive}k}_{\color{blue}j}$ are nonnegative integers and ${\color{red}u}_{\color{blue}j}({\color{magenta}q})$ are polynomials of 
degree less than or equal to ${\color{olive}k}_{{\color{blue}j}-1}+{\color{violet}\delta}-2$. By convention, $0$ is of degree $-1$.
\end{Definition}

When ${\color{violet}\delta}=1$ (resp. ${\color{violet}\delta}=2$) and all ${\color{olive}k}_{\color{blue}j}=0$, 
the super ${\color{violet}\delta}$-fraction \eqref{eq:super} 
is the traditional ${\color{olive}S}$-fraction (resp. ${\color{olive}J}$-fraction). 
A super $2$-fraction is called {\it Hankel continued fraction}.

\begin{Theorem}\label{th:super2}
(i) Let ${\color{violet}\delta}$ be a positive integer.
Each super ${\color{violet}\delta}$-fraction defines
a power series, and conversely, for each power series ${\color{blue}F}({\color{magenta}q})$,
the super ${\color{violet}\delta}$-fraction expansion of ${\color{blue}F}({\color{magenta}q})$ exists
and is unique.
\smallskip
(ii) Let ${\color{blue}F}({\color{magenta}q})$ be a power series such that its ${\color{violet}H}$-fraction 
	is given by \eqref{eq:super} with ${\color{violet}\delta}=2$. 
Then, all non-vanishing Hankel determinants of ${\color{blue}F}({\color{magenta}q})$ are given by
	\begin{equation}\label{eq:HankelDetFundamental}
		{\color{violet}H}_{{\color{teal}s}_{\color{blue}j}}({\color{blue}F}({\color{magenta}q}))= (-1)^{{\color{red}\epsilon}_{\color{blue}j}} {\color{red}v}_0^{{\color{teal}s}_{\color{blue}j}} {\color{red}v}_1^{{\color{teal}s}_{\color{blue}j}-{\color{teal}s}_1} {\color{red}v}_2^{{\color{teal}s}_{\color{blue}j}-{\color{teal}s}_2} \cdots {\color{red}v}_{{\color{blue}j}-1}^{{\color{teal}s}_{\color{blue}j}-{\color{teal}s}_{{\color{blue}j}-1}},
	\end{equation}
where ${\color{red}\epsilon}_{\color{blue}j} = \sum_{{\color{violet}i}=0}^{{\color{blue}j}-1} {{\color{olive}k}_{\color{violet}i}({\color{olive}k}_{\color{violet}i}+1)/2}$
and
${\color{teal}s}_{\color{blue}j}={\color{olive}k}_0+{\color{olive}k}_1+\cdots + {\color{olive}k}_{{\color{blue}j}-1}+{\color{blue}j}$ for every ${\color{blue}j}\geq 0$.
\end{Theorem}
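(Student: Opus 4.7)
The plan is to treat parts (i) and (ii) separately. Part (i) follows from a deterministic block-extraction algorithm, while part (ii) is proved by induction on $j$, using a Hankel-determinant reduction that relates $F(q)$ to the tail series obtained after peeling off its first block.

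For (i), let $F(q)$ be a nonzero power series. The leading block $v_0 q^{k_0}/(1 + u_1(q)q - \cdots)$ is forced: one must take $k_0 = \mathrm{ord}(F)$ and $v_0 = [q^{k_0}] F(q)$, since the subtracted term in the super $\delta$-fraction contributes only at order $\ge k_0 + \delta$. The quotient $v_0 q^{k_0}/F(q)$ has constant term $1$, and the degree constraint $\deg u_1 \le k_0 + \delta - 2$ uniquely determines $u_1(q)$ as the truncation of $(v_0 q^{k_0}/F(q) - 1)/q$ at degree $k_0 + \delta - 2$. Writing the remainder $v_0 q^{k_0}/F(q) - 1 - u_1(q) q$ as $-v_1 q^{k_0 + k_1 + \delta}(1 + O(q))$ determines $k_1 \ge 0$ and $v_1 \ne 0$, and solving for the next tail produces a new power series to which the same procedure applies. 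Existence and uniqueness of the expansion follow by iteration.

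For (ii), the induction rests on a \emph{block-reduction lemma} which, for $\delta = 2$, asserts
\begin{equation*}
H_n(F) \;=\; (-1)^{k_0(k_0+1)/2}\, v_0^{n}\, H_{n - k_0 - 1}(\tilde F),
\end{equation*}
for all $n \ge 0$, with the convention $H_m = 0$ when $m < 0$. Here $\tilde F(q) = v_1 q^{k_1}/(1 + u_2(q) q - \cdots)$ is the tail series, whose own super $2$-fraction is obtained by shifting the indices of $F$ by one. The lemma implies immediately that $H_n(F) = 0$ for $1 \le n \le k_0$ (the first gap) and that $H_{s_1}(F) = (-1)^{k_0(k_0+1)/2} v_0^{k_0+1}$; combined with the inductive hypothesis applied to $\tilde F$ and the identity $s_j = k_0 + 1 + \tilde s_{j-1}$, it telescopes to \eqref{eq:HankelDetFundamental}, with the exponent $\epsilon_j = \sum_{i=0}^{j-1} k_i(k_i+1)/2$ accumulating from each reduction step. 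To prove the lemma itself, one uses the identity $F(q) \cdot (1 + u_1(q) q - q^{k_0+2} \tilde F(q)) = v_0 q^{k_0}$ to perform row and column operations on the Hankel matrix of $F$: the first $k_0 + 1$ rows and columns factor out as an anti-diagonal block of $v_0$'s (contributing both the sign $(-1)^{k_0(k_0+1)/2}$ and the power $v_0^{k_0 + 1}$), while the Schur complement is shown, after eliminating the contribution of $u_1$, to be $v_0^{n - k_0 - 1}$ times a Hankel matrix of $\tilde F$ of size $n - k_0 - 1$.

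The main obstacle is proving the block-reduction lemma rigorously, in particular tracking the sign $(-1)^{k_0(k_0+1)/2}$ and the exact power of $v_0$ through the matrix manipulations, and justifying that the Schur-complement computation yields a Hankel matrix of $\tilde F$ of the correct size rather than some unrelated determinant. Once the lemma is in hand, both parts of the theorem fall out quickly: part (i) ensures the tail $\tilde F$ admits its own super $2$-fraction, so the inductive step is well-posed, and the sign and exponent bookkeeping is then just accumulation over the finitely many levels of the recursion up to index $s_j$.
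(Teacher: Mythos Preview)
The paper does not actually prove this theorem: it is quoted verbatim from \cite{Han2016Adv}, as the paragraph preceding Definition~\ref{def:super} makes explicit (``We now briefly recall the definition and the basic properties of Hankel continued fractions from \cite{Han2016Adv}''). So there is no in-paper proof to compare against.

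Your sketch is essentially the standard argument used in \cite{Han2016Adv}: peel off the first block, obtain a tail series $\tilde F$ whose $H$-fraction has all indices shifted by one, and iterate. The block-reduction identity you state is exactly the key lemma there. Two small points to clean up. First, your formula $H_n(F)=(-1)^{k_0(k_0+1)/2}v_0^{\,n}H_{n-k_0-1}(\tilde F)$ with the convention $H_m=0$ for $m<0$ fails at $n=0$, since $H_0(F)=1$ regardless; you should state it for $n\ge 1$ (the base case $j=0$ of the induction, $H_{s_0}=H_0=1$, is handled separately anyway). Second, your description of the Schur-complement step is brief to the point of being only a plan: showing that after killing the contribution of $u_1$ the remaining block is literally a Hankel matrix of $\tilde F$ (and not merely has the same determinant) requires writing out the row and column operations explicitly, and this is where the factor $v_0^{\,n-k_0-1}$ must be extracted. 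That said, the overall architecture is correct and matches the cited reference.
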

By applying Theorem \ref{th:super2}, the proof of our main results reduces to explicitly determining the corresponding Hankel continued fractions. This is exactly what we now proceed to do. 
The ${\color{violet}H}$-fraction will be given in the standard form
\begin{equation}\label{def:Hfrac}
	\HF({\color{magenta}q}; ({\color{olive}k}_{\color{blue}j}), ({\color{red}v}_{\color{blue}j}), ({\color{red}u}_{\color{blue}j})):=
\cFrac{{\color{red}v}_0 {\color{magenta}q}^{{\color{olive}k}_0}}{ 1+{\color{red}u}_1({\color{magenta}q}) {\color{magenta}q}} 
- \cFrac{ {\color{red}v}_1 {\color{magenta}q}^{{\color{olive}k}_0+{\color{olive}k}_1+2}}{ 1+{\color{red}u}_2({\color{magenta}q}){\color{magenta}q}}
- \cFrac{ {\color{red}v}_2 {\color{magenta}q}^{{\color{olive}k}_1+{\color{olive}k}_2+2}}{ 1+{\color{red}u}_3({\color{magenta}q}){\color{magenta}q}}
- {\cdots} 
\end{equation}
with explict values for ${\color{olive}k}_{\color{blue}j}, {\color{red}v}_{\color{blue}j}, {\color{red}u}_{\color{blue}j}$.

\medskip

\medskip

We are now prepared to write down the explicit ${\color{violet}H}$-fractions of
$$
({\color{blue}\gamma}({\color{cyan}t},{\color{magenta}q})-1)^{\color{red}m},\quad 
\frac{({\color{blue}\gamma}({\color{cyan}t},{\color{magenta}q})-1)^{\color{red}m}}{{\color{magenta}q}},\quad
\frac{({\color{blue}\gamma}({\color{cyan}t},{\color{magenta}q})-1)^{\color{red}m}}{{\color{magenta}q}^2},\quad
\frac{({\color{blue}\gamma}({\color{cyan}t},{\color{magenta}q})-1)^{\color{red}m}}{{\color{magenta}q}^3}.
$$
% The associated Hankel determinants can then be derived from the second part of Theorem \ref{th:super2}.  

\begin{Lemma}\label{lem:HFrac:Q0}
For ${\color{red}m}\geq 1$, the power series ${({\color{blue}\gamma}({\color{cyan}t},{\color{magenta}q})-1)^{\color{red}m}}$ has
	the following ${\color{violet}H}$-fraction expansion:
\begin{equation*}
{({\color{blue}\gamma}({\color{cyan}t},{\color{magenta}q})-1)^{\color{red}m}} 
	= \HF(({\color{olive}k}_{\color{blue}j}), ({\color{red}v}_{\color{blue}j}), ({\color{red}u}_{\color{blue}j})),
\end{equation*}
where
${\color{olive}k}_0={\color{red}m},  {\color{olive}k}_{2{\color{blue}j}+1}={\color{red}m}-2,  {\color{olive}k}_{2{\color{blue}j}+2}=0, {\color{red}v}_0 =1,  {\color{red}v}_1 = {\color{cyan}t}^{\color{red}m}$, 
and
\begin{align*}
	{\color{red}v}_{2{\color{blue}j}+1}&=-{(-{\color{cyan}t})^{\color{red}m}  [{\color{blue}j}]_{{\color{cyan}t}^{\color{red}m}}}/{ [{\color{blue}j}+1]_{{\color{cyan}t}^{\color{red}m}}},\\
	{\color{red}v}_{2{\color{blue}j}+2}&={(-1)^{{\color{red}m}+1}  [{\color{blue}j}+2]_{{\color{cyan}t}^{\color{red}m}}}/{[{\color{blue}j}+1]_{{\color{cyan}t}^{\color{red}m}}};\\
	1+{\color{red}u}_{1}({\color{magenta}q}){\color{magenta}q}&={\color{magenta}\beta}({\color{red}m};{\color{cyan}t},{\color{magenta}q}),\\
	1+{\color{red}u}_{2{\color{blue}j}+2}({\color{magenta}q}){\color{magenta}q} &={\color{magenta}\beta}({\color{red}m};{\color{cyan}t},{\color{magenta}q}) - (-{\color{magenta}q})^{\color{red}m}(1+{\color{cyan}t}^{\color{red}m}),\\
	{\color{red}u}_{2{\color{blue}j}+1}({\color{magenta}q})&=0.
\end{align*}
\end{Lemma}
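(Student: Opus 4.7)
The plan is to construct the $H$-fraction by running Algorithm \hbox{\tt NextABC} (the standard machinery of Hankel continued fractions, recalled in Section \ref{sec:hfrac}) on a quadratic relation satisfied by $F:=(\gamma(t,q)-1)^m$, following the five-step program outlined in the introduction and specialised to the unshifted case. The first task is to derive, via Corollary \ref{cor:F} of Section \ref{sec:quadra}, an explicit quadratic $A(q)F^2+B(q)F+C(q)=0$. Combined with Lemma \ref{lem:beta:Lm}, its middle coefficient turns out to be $\beta(m;t,q)=L_m(1-q-tq,-tq^2)$, which is precisely the partial denominator $1+u_1(q)q$ claimed in the lemma. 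I would then apply \hbox{\tt NextABC} symbolically for small $m$ and small $n$, extracting the first few six-tuples $(A_{n+1},B_{n+1},C_{n+1};k_n,v_n,u_{n+1})$, and observe that the exponents alternate as $k_{2j+1}=m-2$, $k_{2j+2}=0$, that the scalars telescope through $t^m$-quantum integers $[j+1]_{t^m}$, and that $u_{2j+2}(q)$ is always $q^{-1}\bigl(\beta(m;t,q)-1-(-q)^m(1+t^m)\bigr)$ while $u_{2j+1}$ vanishes. This yields the closed forms listed in the statement; the base cases $n=0,1$---where $k_0=m$ and $v_1=t^m$ do not fit the generic formulas---must be verified separately.

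Second I would prove the guessed forms by induction on $n$, checking at each step that \hbox{\tt NextABC} applied to $(A_n,B_n,C_n)$ yields the data predicted at level $n+1$. The key algebraic inputs are the telescoping identity $[j+2]_{t^m}-t^m[j+1]_{t^m}=1$, the Lucas recurrence \eqref{Lucas:rec} applied to $\beta$, and the fact---visible from Definition \ref{def:rho}---that $\beta(m;t,q)-(-q)^m(1+t^m)$ is a polynomial in $q$ of degree exactly $m-1$; this last point guarantees that $u_{2j+2}$ meets the degree bound $k_{2j+1}+\delta-2=m-2$ prescribed by Definition \ref{def:super}.

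The main obstacle is the bookkeeping. A single \hbox{\tt NextABC} step produces rather bulky rational expressions, and collapsing them back into the compact form involving $[j+1]_{t^m}$ requires recognising hidden Lucas identities after subtraction of the top term $(-q)^m(1+t^m)$. The alternation between odd-index steps (with $k=m-2$) and even-index steps (with $k=0$) must be handled in two interlocking induction subcases, so that the invariant on the triple $(A,B,C)$ passes cleanly through a full period of length two. For the present unshifted case the verification should still fit on the page, but as the authors already signal for the harder shifted versions treated in Lemmas \ref{lem:HFrac:Q0}--\ref{lem:HFrac:Q3}, beyond this point the symbolic algebra becomes unwieldy and the computer-assisted symbolic proof is the sensible route.
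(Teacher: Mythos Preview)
Your plan is correct in principle but takes a longer route than the paper. The paper does \emph{not} rerun the full \hbox{\tt NextABC} induction for the unshifted series; instead it observes that the quadratic relation from Corollary~\ref{cor:F},
\[
0 = -q^{m} + \beta(m;t,q)\,F - t^{m} q^{m} F^{2},
\]
already gives one level of the continued fraction,
\[
F = \frac{q^{m}}{\beta(m;t,q) - t^{m} q^{m+2}\cdot \dfrac{(\gamma(t,q)-1)^{m}}{q^{2}}},
\]
and the tail is \emph{precisely} the series whose $H$-fraction is computed in Lemma~\ref{lem:HFrac:Q2}. Thus Lemma~\ref{lem:HFrac:Q0} is obtained from Lemma~\ref{lem:HFrac:Q2} by prepending a single layer (contributing $k_0=m$, $v_0=1$, $1+u_1 q=\beta$), after which the parameters $(k_{2j+1},k_{2j+2},v_{2j+1},v_{2j+2},u_{2j+2},u_{2j+1})$ are simply those of Lemma~\ref{lem:HFrac:Q2} with an index shift. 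This buys the result essentially for free once Lemma~\ref{lem:HFrac:Q2} is in hand, and avoids duplicating the two-subcase induction and the computer-assisted verification of relations \eqref{eqAA}--\eqref{eqCC} that you propose to redo. Your direct approach would work and is self-contained, but since Lemma~\ref{lem:HFrac:Q2} must be proved anyway for Theorem~\ref{th:main:Det2}, the paper's reduction is the cleaner choice.
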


\begin{Lemma}\label{lem:HFrac:Q1}
For ${\color{red}m}\geq 1$, the power series ${({\color{blue}\gamma}({\color{cyan}t},{\color{magenta}q})-1)^{\color{red}m}/{\color{magenta}q}}$ has
	the following ${\color{violet}H}$-fraction expansion:
\begin{equation*}
\frac{({\color{blue}\gamma}({\color{cyan}t},{\color{magenta}q})-1)^{\color{red}m}}{{\color{magenta}q}}
	= \HF(({\color{olive}k}_{\color{blue}j}), ({\color{red}v}_{\color{blue}j}), ({\color{red}u}_{\color{blue}j})),
\end{equation*}
where
${\color{olive}k}_{\color{blue}j} ={\color{red}m}-1,
{\color{red}v}_0 =1,
{\color{red}v}_{\color{blue}j} ={\color{cyan}t}^{\color{red}m},
1+{\color{red}u}_{\color{blue}j}({\color{magenta}q}){\color{magenta}q} ={\color{magenta}\beta}({\color{red}m};{\color{cyan}t},{\color{magenta}q}) $.
\end{Lemma}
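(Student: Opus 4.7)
The plan is to verify the proposed $H$-fraction by showing that it defines a formal power series satisfying the same quadratic equation as $F:=(\gamma(t,q)-1)^{m}/q$, and then invoke the uniqueness statement in Theorem \ref{th:super2}(i). First I would derive the quadratic for $F$. Substituting $G=\gamma-1$ into the Narayana relation \eqref{eq:Narayana} gives after one simplification $tqG^{2}-(1-q-tq)G+q=0$. Setting $\zeta:=tqG$, this becomes $\zeta^{2}-(1-q-tq)\zeta+tq^{2}=0$. Denote by $\zeta^{*}$ the conjugate root, so that $\zeta+\zeta^{*}=1-q-tq$ and $\zeta\zeta^{*}=tq^{2}$. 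By Lemma \ref{lem:beta:Lm} combined with the Binet-type identity for Lucas polynomials,
$$\beta(m;t,q)=L_{m}(1-q-tq,-tq^{2})=\zeta^{m}+(\zeta^{*})^{m}.$$
Writing $F=G^{m}/q=\zeta^{m}/(t^{m}q^{m+1})$, Vieta's relations then show that $F$ is a root of
$$t^{m}q^{m+1}F^{2}-\beta(m;t,q)\,F+q^{m-1}=0,$$
its Laurent conjugate being $(\zeta^{*})^{m}/(t^{m}q^{m+1})$, since the product of the two roots equals $(\zeta\zeta^{*})^{m}/(t^{2m}q^{2m+2})=1/(t^{m}q^{2})$.

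Next I would analyze the $H$-fraction itself. Because the data $(k_{j},v_{j},u_{j})$ is independent of $j$ for $j\geq 1$, and $k_{j-1}+k_{j}+2=2m$ throughout, the fraction is purely periodic from the first stage:
$$\HF=\cFrac{q^{m-1}}{\beta(q)}-\cFrac{t^{m}q^{2m}}{\beta(q)}-\cFrac{t^{m}q^{2m}}{\beta(q)}-\cdots$$
Thus the tail $Y$ starting after $q^{m-1}/\beta(q)$ satisfies the self-similar identity $Y(\beta-Y)=t^{m}q^{2m}$, i.e. $Y^{2}-\beta(q)Y+t^{m}q^{2m}=0$. The two roots of this quadratic are precisely $\zeta^{m}$ and $(\zeta^{*})^{m}$ (one checks $\beta^{2}-4t^{m}q^{2m}=(\zeta^{m}-(\zeta^{*})^{m})^{2}$); since $Y$ is a power series of order $q^{2m}$ while $(\zeta^{*})^{m}=1+O(q)$, we must select $Y=\zeta^{m}$. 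Combining this with $\zeta^{m}(\zeta^{*})^{m}=(tq^{2})^{m}$ yields
$$\HF=\frac{q^{m-1}}{\beta-Y}=\frac{q^{m-1}}{(\zeta^{*})^{m}}=\frac{\zeta^{m}}{t^{m}q^{m+1}}=F,$$
which is exactly the claim.

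Finally, it remains to check the admissibility conditions of Definition \ref{def:super}: each $v_{j}$ is nonzero, each $k_{j}=m-1\geq 0$, and each $u_{j}(q)=(\beta(m;t,q)-1)/q$ is a polynomial of degree at most $k_{j-1}+\delta-2=m-1$. The last point is immediate from the expansion $\beta(m;t,q)=1+\sum_{d=1}^{m}\rho(d)(-q)^{d}$ displayed in Definition \ref{def:rho}. The uniqueness assertion in Theorem \ref{th:super2}(i) then forces the $H$-fraction expansion to coincide with $F$.

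The only conceptually subtle step is the identification of the roots of the tail's quadratic with $(\zeta^{m},(\zeta^{*})^{m})$; once Lemma \ref{lem:beta:Lm} is used to recognize $\beta$ as the $m$-th power sum of the two roots of the quadratic satisfied by $tq(\gamma-1)$, the whole verification collapses to a one-line Vieta computation. This explains why the first-shifted series admits such a clean, purely periodic $H$-fraction, in sharp contrast to the other three shifts, whose non-periodic expansions require the full $\mathtt{NextABC}$ machinery invoked elsewhere in the paper.
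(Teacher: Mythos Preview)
Your proof is correct and rests on the same idea as the paper's: the quadratic satisfied by $F=(\gamma-1)^m/q$ makes the continued fraction purely periodic. The paper's version is more compressed---it simply cites Corollary~\ref{cor:F} to get $-q^{m-1}+\beta F - t^m q^{m+1}F^2=0$, rewrites this as the self-referential identity $F=q^{m-1}/(\beta - t^m q^{m+1}F)$, and observes that iterating gives the $H$-fraction---whereas you rederive the quadratic via the Binet formula for $\beta=\zeta^m+(\zeta^*)^m$ and then identify the tail root explicitly; the extra detail is sound but not needed once Corollary~\ref{cor:F} is available.
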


\begin{Lemma}\label{lem:HFrac:Q2}
For ${\color{red}m}\geq 2$, the power series ${({\color{blue}\gamma}({\color{cyan}t},{\color{magenta}q})-1)^{\color{red}m}/{\color{magenta}q}^2}$ has
	the following ${\color{violet}H}$-fraction expansion:
\begin{equation*}
\frac{({\color{blue}\gamma}({\color{cyan}t},{\color{magenta}q})-1)^{\color{red}m}}{{\color{magenta}q}^2}
	= \HF(({\color{olive}k}_{\color{blue}j}), ({\color{red}v}_{\color{blue}j}), ({\color{red}u}_{\color{blue}j})),
\end{equation*}
where
${\color{olive}k}_{2{\color{blue}j}}={\color{red}m}-2,  {\color{olive}k}_{2{\color{blue}j}+1}=0, {\color{red}v}_0 =1$, and
\begin{align*}
	{\color{red}v}_{2{\color{blue}j}}&=-{(-{\color{cyan}t})^{\color{red}m}  [{\color{blue}j}]_{{\color{cyan}t}^{\color{red}m}}}/{ [{\color{blue}j}+1]_{{\color{cyan}t}^{\color{red}m}}},\\
	{\color{red}v}_{2{\color{blue}j}+1}&={(-1)^{{\color{red}m}+1}  [{\color{blue}j}+2]_{{\color{cyan}t}^{\color{red}m}}}/{[{\color{blue}j}+1]_{{\color{cyan}t}^{\color{red}m}}},\\
	1+{\color{red}u}_{2{\color{blue}j}+1}({\color{magenta}q}){\color{magenta}q} &={\color{magenta}\beta}({\color{red}m};{\color{cyan}t},{\color{magenta}q}) - (-{\color{magenta}q})^{\color{red}m}(1+{\color{cyan}t}^{\color{red}m}),\\
	{\color{red}u}_{2{\color{blue}j}}({\color{magenta}q})&=0.
\end{align*}
\end{Lemma}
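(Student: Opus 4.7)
The plan is to apply the $H$-fraction framework of Section \ref{sec:hfrac} to the series $F(q) = (\gamma(t,q)-1)^m/q^2$ following the five-step strategy announced in the introduction. First, I would derive a quadratic relation
$$
A_0(q) + B_0(q)\, F + C_0(q)\, F^2 = 0
$$
using Corollary \ref{cor:F} from Section \ref{sec:quadra}, which specializes the Narayana identity \eqref{eq:Narayana} to shifted powers of $\gamma-1$. Since $\gamma-1 = q + O(q^2)$, the series $F$ has valuation exactly $m-2$ for $m \geq 2$, which matches the initial exponent $k_0 = m-2$ in the statement and yields $v_0 = [q^{m-2}]F = 1$.

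Next, the $H$-fraction is built stage-by-stage by iterating Algorithm \texttt{NextABC} on $(A_0, B_0, C_0)$. Each application outputs a sextuple
$$
(A_{n+1}, B_{n+1}, C_{n+1};\ k_n, a_n, D_n)
$$
in which $k_n$ is precisely the exponent appearing in the $n$-th stage of \eqref{def:Hfrac}, $a_n$ equals the Hankel coefficient $v_n$, and $D_n$ together with $C_n$ reconstructs the denominator polynomial $1 + u_{n+1}(q)\,q$. By the strategy of the paper, the closed-form expressions for these sextuples are collected in Lemma \ref{lem:ABCq2}, and the bulk of the technical work lies in checking that those formulas actually satisfy the \texttt{NextABC} recursion.

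Granted Lemma \ref{lem:ABCq2}, the present statement reduces to translating the closed forms into $H$-fraction data. The alternation pattern $k_{2j} = m-2$, $k_{2j+1} = 0$ in the claim reflects the parity structure of the sextuples: at even stages one absorbs the monomial $q^{m-2}$ coming from the shifted numerator $(\gamma-1)^m/q^2$, whereas the odd stages behave like an ordinary $J$-fraction step of exponent $0$. The formulas $v_{2j} = -(-t)^m[j]_{t^m}/[j+1]_{t^m}$ and $v_{2j+1} = (-1)^{m+1}[j+2]_{t^m}/[j+1]_{t^m}$, together with the alternating denominators $u_{2j}(q) = 0$ and $1 + u_{2j+1}(q)\,q = \beta(m;t,q) - (-q)^m(1+t^m)$, then match directly the values of $a_n$ and $1 + u_{n+1}(q)\,q$ produced by the algorithm; the index convention handles the special case $v_0 = 1$ cleanly, since the generic formula for $v_{2j}$ vanishes at $j = 0$.

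The main obstacle is the inductive verification of Lemma \ref{lem:ABCq2} itself. Checking that the conjectured sextuples satisfy the \texttt{NextABC} recursion requires manipulating binomial sums built from $\rho(m;t,d)$ together with rational expressions in $R(m;t,n)$, $S(m;t,n)$, $\beta(m;t,q)$, and the $q$-integers $[j]_{t^m}$. The identities recorded immediately after Lemma \ref{lem:beta:Lm}, relating $R$, $S$, and $m(1-t^m)/(1-t)$, and the Lucas-polynomial representation $\beta = L_m(1-q-tq,-tq^2)$, are the essential tools for collapsing these long expressions into the compact closed forms, and this is exactly the step the author delegates in part to symbolic computation. Once that inductive verification is in hand, the extraction of the $H$-fraction data asserted in Lemma \ref{lem:HFrac:Q2} is purely bookkeeping.
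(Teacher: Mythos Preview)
Your proposal is correct and follows the paper's own approach exactly: obtain the quadratic via Corollary~\ref{cor:F}, iterate \texttt{NextABC}, verify the closed forms of Lemma~\ref{lem:ABCq2} (partly by computer algebra), and read off the $H$-fraction data. Two small slips worth fixing: the translation is $v_n = -a_n$ (not $v_n = a_n$; compare $a_0=-1$ with $v_0=1$) and $1+u_{n+1}(q)q = D_n$ alone, while the quantities $R(m;t,n)$ and $S(m;t,n)$ appear only in the $q^3$ case of Lemma~\ref{lem:ABCq3}, not in Lemma~\ref{lem:ABCq2}.
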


\begin{Lemma}\label{lem:HFrac:Q3}
For ${\color{red}m}\geq 3$, the power series ${({\color{blue}\gamma}({\color{cyan}t},{\color{magenta}q})-1)^{\color{red}m}/{\color{magenta}q}^3}$ has
	the following ${\color{violet}H}$-fraction expansion:
\begin{equation*}
\frac{({\color{blue}\gamma}({\color{cyan}t},{\color{magenta}q})-1)^{\color{red}m}}{{\color{magenta}q}^3}
	= \HF(({\color{olive}k}_{\color{blue}j}), ({\color{red}v}_{\color{blue}j}), ({\color{red}u}_{\color{blue}j})),
\end{equation*}
where
${\color{olive}k}_{3{\color{blue}j}}={\color{red}m}-3, 
{\color{olive}k}_{3{\color{blue}j}+1}=0, 
{\color{olive}k}_{3{\color{blue}j}+2}=0,
{\color{red}v}_0=-1$, and
\begin{align*}
{\color{red}v}_{3{\color{blue}j}} &=(-{\color{cyan}t})^{{\color{red}m}+1}   { {\color{teal}R}({\color{blue}j}-1) }/{ [{\color{blue}j}+1]_{{\color{cyan}t}^{\color{red}m}}^{2}  },\\
{\color{red}v}_{3{\color{blue}j}+1} &={(-1)^{{\color{red}m}}  {\color{teal}R}({\color{blue}j}) }/{[{\color{blue}j}+1]_{{\color{cyan}t}^{\color{red}m}}^2  },\\
{\color{red}v}_{3{\color{blue}j}+2} &=- [{\color{blue}j}+1]_{{\color{cyan}t}^{\color{red}m}}^2 [{\color{blue}j}+2]_{{\color{cyan}t}^{\color{red}m}}^2  /{{\color{teal}R}({\color{blue}j})^2 },\\
{\color{red}u}_{3{\color{blue}j}}({\color{magenta}q}) &=- {[{\color{blue}j}]_{{\color{cyan}t}^{\color{red}m}} [{\color{blue}j}+1]_{{\color{cyan}t}^{\color{red}m}}  }/{{\color{teal}R}({\color{blue}j}-1) },\\
1+{\color{red}u}_{3{\color{blue}j}+1}({\color{magenta}q}){\color{magenta}q} &={\color{teal}\alpha}({\color{magenta}q}) ,\\
{\color{red}u}_{3{\color{blue}j}+2}({\color{magenta}q}) &={[{\color{blue}j}+1]_{{\color{cyan}t}^{\color{red}m}} [{\color{blue}j}+2]_{{\color{cyan}t}^{\color{red}m}}  }/{{\color{teal}R}({\color{blue}j}) }.
\end{align*}
\end{Lemma}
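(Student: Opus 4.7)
The plan is to follow the five-step Hankel continued fraction strategy outlined in the introduction, here specialised to $F(q) = (\gamma(t,q)-1)^m/q^3$ with $m \geq 3$. The engine will be Algorithm \texttt{NextABC}: starting from a quadratic equation $A_0(q) + B_0(q)F(q) + C_0(q)F(q)^2 = 0$, one iteratively produces six-tuples $(A_{n+1},B_{n+1},C_{n+1};k_n,a_n,D_n)$, and the data $(k_n,v_n,u_n)$ of the $H$-fraction can be read off along the way.

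First I would apply Corollary~\ref{cor:F} to obtain an explicit initial quadratic for $F$. Substituting $\gamma = 1 + q^3 F$ into the Narayana relation \eqref{eq:Narayana} and raising to the $m$-th power, one obtains $A_0, B_0, C_0$ expressed through $\beta(m;t,q)$ and $\alpha(m;t,q)$ of Definition~\ref{def:rho}; the identity $\beta = L_m(1 - q - tq, -tq^2)$ from Lemma~\ref{lem:beta:Lm} will be useful for keeping these polynomials compact. Then I would run \texttt{NextABC} for the first several steps, recognise the mod-$3$ periodic pattern (which corresponds precisely to the three non-vanishing cases $N = mn, mn-1, mn-2$ in Theorem~\ref{th:main:Det3}), and match it against the conjectural closed form already displayed as Lemma~\ref{lem:ABCq3}.

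Next I would prove the closed form by induction on $j$: assuming $(A_{3j},B_{3j},C_{3j})$ has the stated form, three consecutive applications of \texttt{NextABC} should reproduce $(A_{3(j+1)},B_{3(j+1)},C_{3(j+1)})$ with $j$ replaced by $j+1$. Reading off the intermediate quotients gives exactly the $k_{3j}, k_{3j+1}, k_{3j+2}$, the $v_{3j}, v_{3j+1}, v_{3j+2}$ and the $u_{3j}, u_{3j+1}, u_{3j+2}$ listed in Lemma~\ref{lem:HFrac:Q3}. The linear identities linking $R(j)$, $R(j-1)$, $S(j)$, $S(j+1)$ and $[j+1]_{t^m}$ stated right after Lemma~\ref{lem:beta:Lm} will be the lever that reduces the verifications to manageable form.

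The main obstacle is precisely this inductive verification. Already the explicit formula for $C_{3j}$ reproduced in the introduction mixes $R(j)$ and $R(j-1)$ in both numerators and denominators, and each of the three \texttt{NextABC} steps multiplies the apparent complexity. Most of these polynomial identities are not realistically checked by hand and must be discharged by the symbolic computation program made available on the author's webpage; what remains to be done on paper is the bookkeeping that packages the six-tuple data into the $H$-fraction of the desired shape \eqref{def:Hfrac}, at which point the statement of Lemma~\ref{lem:HFrac:Q3} follows directly.
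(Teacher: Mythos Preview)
Your proposal is correct and follows essentially the same route as the paper: start from the initial quadratic for $F$ given by Corollary~\ref{cor:F} (with $A_0=-q^{m-3}$, $B_0=\beta(m;t,q)$, $C_0=-t^m q^{m+3}$), verify the closed-form six-tuples of Lemma~\ref{lem:ABCq3} through three consecutive applications of \texttt{NextABC} modulo~$3$, and read off $(k_j,v_j,u_j)$ via $v_j=-a_j$ and $1+u_j(q)q=D_{j-1}$. The paper likewise carries out the determination of $k_n$, $a_n$, $D_n$ by hand in each of the three turns and delegates the remaining polynomial identities \eqref{eqAA}--\eqref{eqCC} to symbolic computation; your only minor slip is the phrase ``raising to the $m$-th power'', since the quadratic for $(\gamma-1)^m$ comes from Lemma~\ref{lem:ABC:operator} via Lucas polynomials rather than a naive power, but you invoke the correct Corollary~\ref{cor:F} anyway.
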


% >>>

\section{Proofs of the main theorems}\label{sec:proofmain} % <<<

In this section we prove the four main theorems using the four explicit ${\color{violet}H}$-fraction given in the previous section.

\begin{proof}[Proof of Theorem \ref{th:main:Det0}]
We apply the second part of Theorem \ref{th:super2} to the ${\color{violet}H}$-fraction from Lemma \ref{lem:HFrac:Q0} that corresponds to ${({\color{blue}\gamma}({\color{cyan}t},{\color{magenta}q})-1)^{\color{red}m}}$. In this setting, we obtain ${\color{teal}s}_0=0$,
\begin{align*}
{\color{teal}s}_{2{\color{blue}j}+1}
&={\color{olive}k}_0+{\color{olive}k}_1+\cdots + {\color{olive}k}_{2{\color{blue}j}}+2{\color{blue}j}+1
%={\color{red}m}+ ({\color{red}m}-2){\color{blue}j} +2{\color{blue}j}+1
= {\color{red}m}({\color{blue}j}+1)+1,\\
{\color{teal}s}_{2{\color{blue}j}+2}&={\color{teal}s}_{2{\color{blue}j}+1} +  {\color{olive}k}_{2{\color{blue}j}+1}+1
= {\color{red}m}({\color{blue}j}+1)+1 + ({\color{red}m}-2) +1
= {\color{red}m}({\color{blue}j}+2),
\end{align*}
and ${\color{red}\epsilon}_0 =0$,
\begin{align*}
{\color{red}\epsilon}_{2{\color{blue}j}+1} 
&={\color{red}m}({\color{red}m}+1)/2 +   \sum_{{\color{violet}i}=1}^{2{\color{blue}j}} {{\color{olive}k}_{\color{violet}i}({\color{olive}k}_{\color{violet}i}+1)/2}\\
	&= {\color{red}m}({\color{red}m}+1)/2 +   {\color{blue}j} ({\color{red}m}-2)({\color{red}m}-1)/2,\\
{\color{red}\epsilon}_{2{\color{blue}j}+2}  
&={\color{red}\epsilon}_{2{\color{blue}j}+1} + ({\color{red}m}-2)({\color{red}m}-1)/2\\
	&= {\color{red}m}({\color{red}m}+1)/2 +   ({\color{blue}j}+1) ({\color{red}m}-2)({\color{red}m}-1)/2.
\end{align*}
Since ${\color{red}v}_0=1$ and ${\color{red}v}_{2{\color{violet}i}+2} {\color{red}v}_{2{\color{violet}i}+3}  ={\color{cyan}t}^{{\color{red}m}}$ , we have
${\color{violet}H}_0 =1,
{\color{violet}H}_{{\color{teal}s}_1} =(-1)^{{\color{red}m}({\color{red}m}+1)/2}$,  
\begin{align*}
{\color{violet}H}_{{\color{teal}s}_{2{\color{blue}j}+1}}({\color{blue}F}({\color{magenta}q}))
&=(-1)^{{\color{red}\epsilon}_{2{\color{blue}j}+1}}  {\color{red}v}_1^{{\color{teal}s}_{2{\color{blue}j}+1}-{\color{teal}s}_1} {\color{red}v}_2^{{\color{teal}s}_{2{\color{blue}j}+1}-{\color{teal}s}_2} \cdots {\color{red}v}_{2{\color{blue}j}}^{{\color{teal}s}_{2{\color{blue}j}+1}-{\color{teal}s}_{2{\color{blue}j}}},\\
&=(-1)^{{\color{red}\epsilon}_{2{\color{blue}j}+1}}   {\color{red}v}_1^{{\color{teal}s}_{2{\color{blue}j}+1}-{\color{teal}s}_1} {\color{red}v}_{2{\color{blue}j}}^{{\color{teal}s}_{2{\color{blue}j}+1}-{\color{teal}s}_{2{\color{blue}j}}}\prod_{{\color{violet}i}=0}^{{\color{blue}j}-2} {\color{red}v}_{2{\color{violet}i}+2}^{{\color{teal}s}_{2{\color{blue}j}+1}-{\color{teal}s}_{2{\color{violet}i}+2}} {\color{red}v}_{2{\color{violet}i}+3}^{{\color{teal}s}_{2{\color{blue}j}+1}-{\color{teal}s}_{2{\color{violet}i}+3}}\\
%&=(-1)^{  ({\color{blue}j}+1)({\color{red}m}-2)({\color{red}m}-1)/2 +{\color{red}m} }  
%  {\color{cyan}t}^{{\color{red}m}^2{\color{blue}j}}
%	 \frac{ [{\color{blue}j}+1]_{{\color{cyan}t}^{\color{red}m}}}{[{\color{blue}j}]_{{\color{cyan}t}^{\color{red}m}}}\\
%	& \quad \times \prod_{{\color{violet}i}=0}^{{\color{blue}j}-2} ({\color{red}v}_{2{\color{violet}i}+2} {\color{red}v}_{2{\color{violet}i}+3})^{({\color{blue}j}-{\color{violet}i}){\color{red}m}  -{\color{red}m} +1  }
%  \prod_{{\color{violet}i}=0}^{{\color{blue}j}-2} {\color{red}v}_{2{\color{violet}i}+3}^{ -1 }\\
&=(-1)^{  ({\color{blue}j}+1)({\color{red}m}-2)({\color{red}m}-1)/2 +{\color{red}m} +({\color{red}m}+1)({\color{blue}j}-1) }  
	{\color{cyan}t}^{{\color{red}m}^2{\color{blue}j}}\\
& \quad \times  \frac{[{\color{blue}j}+1]_{{\color{cyan}t}^{\color{red}m}}}{[{\color{blue}j}]_{{\color{cyan}t}^{\color{red}m}}} 
\prod_{{\color{violet}i}=0}^{{\color{blue}j}-2} {\color{cyan}t}^{{\color{red}m}({\color{red}m}{\color{blue}j}-{\color{red}m}+1) -{\color{violet}i}{\color{red}m}^2 -{\color{red}m}}
 \prod_{{\color{violet}i}=0}^{{\color{blue}j}-2}   \frac  { [{\color{violet}i}+2]_{{\color{cyan}t}^{\color{red}m}}}{  [{\color{violet}i}+1]_{{\color{cyan}t}^{\color{red}m}}   }\\
&=(-1)^{  ({\color{blue}j}+1){\color{red}m}({\color{red}m}-1)/2 +{\color{red}m}   }   
{\color{cyan}t}^{ {\color{red}m}^2 {\color{blue}j} ({\color{blue}j}+1)/2 } [{\color{blue}j}+1]_{{\color{cyan}t}^{\color{red}m}},
\end{align*}
and
\begin{align*}
{\color{violet}H}_{{\color{teal}s}_{2{\color{blue}j}+2}}({\color{blue}F}({\color{magenta}q}))
&=(-1)^{{\color{red}\epsilon}_{2{\color{blue}j}+2}}  {\color{red}v}_1^{{\color{teal}s}_{2{\color{blue}j}+2}-{\color{teal}s}_1} {\color{red}v}_2^{{\color{teal}s}_{2{\color{blue}j}+2}-{\color{teal}s}_2} \cdots {\color{red}v}_{2{\color{blue}j}+1}^{{\color{teal}s}_{2{\color{blue}j}+2}-{\color{teal}s}_{2{\color{blue}j}+1}},\\
&=(-1)^{{\color{red}\epsilon}_{2{\color{blue}j}+2}}  {\color{red}v}_1^{{\color{teal}s}_{2{\color{blue}j}+2}-{\color{teal}s}_1}  \prod_{{\color{violet}i}=0}^{{\color{blue}j}-1} {\color{red}v}_{2{\color{violet}i}+2}^{{\color{teal}s}_{2{\color{blue}j}+2}-{\color{teal}s}_{2{\color{violet}i}+2}} {\color{red}v}_{2{\color{violet}i}+3}^{{\color{teal}s}_{2{\color{blue}j}+2}-{\color{teal}s}_{2{\color{violet}i}+3}}\\
&=(-1)^{{\color{red}m}({\color{red}m}+1)/2 + ({\color{blue}j}+1)({\color{red}m}-2)({\color{red}m}-1)/2  }  {\color{cyan}t}^{{\color{red}m}({\color{red}m}{\color{blue}j}+{\color{red}m}-1)}\\
& \quad \times\prod_{{\color{violet}i}=0}^{{\color{blue}j}-1} ({\color{red}v}_{2{\color{violet}i}+2} {\color{red}v}_{2{\color{violet}i}+3})^{{\color{red}m}({\color{blue}j}-{\color{violet}i})  } \prod_{{\color{violet}i}=0}^{{\color{blue}j}-1} {\color{red}v}_{2{\color{violet}i}+3}^{ -1}\\
&=(-1)^{{\color{red}m}({\color{red}m}+1)/2 + ({\color{blue}j}+1)({\color{red}m}-2)({\color{red}m}-1)/2  }  {\color{cyan}t}^{{\color{red}m}({\color{red}m}{\color{blue}j}+{\color{red}m}-1)}\\
& \quad \times\prod_{{\color{violet}i}=0}^{{\color{blue}j}-1} ({\color{cyan}t}^{\color{red}m})^{{\color{red}m}({\color{blue}j}-{\color{violet}i})  }
\prod_{{\color{violet}i}=0}^{{\color{blue}j}-1} \frac{(-1)^{{\color{red}m}+1} [{\color{violet}i}+2]_{{\color{cyan}t}^{\color{red}m}}}{{\color{cyan}t}^{\color{red}m} [{\color{violet}i}+1]_{{\color{cyan}t}^{\color{red}m}}}\\
&=(-1)^{({\color{blue}j}+2){\color{red}m}({\color{red}m}-1)/2 +1 }  {\color{cyan}t}^{{\color{red}m}({\color{blue}j}+1)({\color{red}m}{\color{blue}j}+2{\color{red}m}-2)/2 } 
{ [{\color{blue}j}+1]_{{\color{cyan}t}^{\color{red}m}}}. \qedhere
\end{align*}
\end{proof}

\begin{proof}[Proof of Theorem \ref{th:main:Det1}]
Apply the second part of Theorem \ref{th:super2} to the ${\color{violet}H}$-fraction from Lemma \ref{lem:HFrac:Q1} corresponding to ${({\color{blue}\gamma}({\color{cyan}t},{\color{magenta}q})-1)^{\color{red}m}}/{{\color{magenta}q}}$. Then we obtain
\begin{align*}
{\color{teal}s}_0 &=0, \qquad 
{\color{teal}s}_{\color{blue}j}
={\color{olive}k}_0+{\color{olive}k}_1+\cdots + {\color{olive}k}_{{\color{blue}j}-1}+{\color{blue}j}
={\color{blue}j}({\color{red}m}-1) +{\color{blue}j}
= {\color{red}m}{\color{blue}j},
\end{align*}
and
\begin{equation*}
{\color{red}\epsilon}_{\color{blue}j} 
= \sum_{{\color{violet}i}=0}^{{\color{blue}j}-1} {{\color{olive}k}_{\color{violet}i}({\color{olive}k}_{\color{violet}i}+1)/2}
= {\color{blue}j} {\color{red}m}({\color{red}m}-1)/2.
\end{equation*}
Since ${\color{red}v}_0=1$, we have ${\color{violet}H}_0 =1$,
\begin{align*}
{\color{violet}H}_{{\color{teal}s}_{\color{blue}j}}({\color{blue}F}({\color{magenta}q}))
&=(-1)^{{\color{red}\epsilon}_{\color{blue}j}} {\color{red}v}_0^{{\color{teal}s}_{\color{blue}j}} {\color{red}v}_1^{{\color{teal}s}_{\color{blue}j}-{\color{teal}s}_1} {\color{red}v}_2^{{\color{teal}s}_{\color{blue}j}-{\color{teal}s}_2} \cdots {\color{red}v}_{{\color{blue}j}-1}^{{\color{teal}s}_{\color{blue}j}-{\color{teal}s}_{{\color{blue}j}-1}}\\
&=(-1)^{{\color{blue}j}{\color{red}m}({\color{red}m}-1)/2}  {\color{red}v}_1^{{\color{blue}j}{\color{red}m}-{\color{red}m}} {\color{red}v}_2^{{\color{blue}j}{\color{red}m}-2{\color{red}m}} \cdots {\color{red}v}_{{\color{blue}j}-1}^{{\color{blue}j}{\color{red}m}-({\color{blue}j}-1){\color{red}m}}\\
&=(-1)^{{\color{blue}j}{\color{red}m}({\color{red}m}-1)/2}  ({\color{cyan}t}^{\color{red}m})^{{\color{blue}j}{\color{red}m}-{\color{red}m}+{\color{blue}j}{\color{red}m}-2{\color{red}m} + \cdots   +{\color{blue}j}{\color{red}m}-({\color{blue}j}-1){\color{red}m}}\\
&=(-1)^{{\color{blue}j}{\color{red}m}({\color{red}m}-1)/2}  {\color{cyan}t}^{{\color{red}m}^2{\color{blue}j}({\color{blue}j}-1)/2} \qedhere
\end{align*}
\end{proof}

\begin{proof}[Proof of Theorem \ref{th:main:Det2}]
We apply the second part of Theorem \ref{th:super2} to the ${\color{violet}H}$-fraction from Lemma \ref{lem:HFrac:Q2}, corresponding to ${({\color{blue}\gamma}({\color{cyan}t},{\color{magenta}q})-1)^{\color{red}m}}/{{\color{magenta}q}^2}$. This yields
${\color{teal}s}_0 =0$,
\begin{align*}
{\color{teal}s}_{2{\color{blue}j}+1}
&={\color{olive}k}_0+{\color{olive}k}_1+\cdots + {\color{olive}k}_{2{\color{blue}j}}+2{\color{blue}j}+1\\
	&=({\color{blue}j}+1)({\color{red}m}-2) + 2{\color{blue}j}+1
	=({\color{blue}j}+1){\color{red}m} -1 ,\\
{\color{teal}s}_{2{\color{blue}j}+2}&={\color{olive}k}_0+{\color{olive}k}_1+\cdots + {\color{olive}k}_{2{\color{blue}j}+1}+2{\color{blue}j}+2\\
&=({\color{olive}k}_0+{\color{olive}k}_1+\cdots + {\color{olive}k}_{2{\color{blue}j}}+2{\color{blue}j}+1) +1
=({\color{blue}j}+1){\color{red}m},
\end{align*}
and ${\color{red}\epsilon}_0 =0$,
\begin{align*}
{\color{red}\epsilon}_{2{\color{blue}j}+1} &=\sum_{{\color{violet}i}=0}^{2{\color{blue}j}} {{\color{olive}k}_{\color{violet}i}({\color{olive}k}_{\color{violet}i}+1)/2}
=({\color{blue}j}+1) ({\color{red}m}-2)({\color{red}m}-1)/2,\\
{\color{red}\epsilon}_{2{\color{blue}j}+2}  &=({\color{blue}j}+1) ({\color{red}m}-2)({\color{red}m}-1)/2.
\end{align*}
Since ${\color{red}v}_0=1$ and  ${\color{red}v}_{2{\color{violet}i}+1} {\color{red}v}_{2{\color{violet}i}+2}  ={\color{cyan}t}^{\color{red}m}$, we have
${\color{violet}H}_0 =1$,
\begin{align*}
{\color{violet}H}_{{\color{teal}s}_{2{\color{blue}j}+1}}({\color{blue}F}({\color{magenta}q}))
&=(-1)^{{\color{red}\epsilon}_{2{\color{blue}j}+1}}  {\color{red}v}_1^{{\color{teal}s}_{2{\color{blue}j}+1}-{\color{teal}s}_1} {\color{red}v}_2^{{\color{teal}s}_{2{\color{blue}j}+1}-{\color{teal}s}_2} \cdots {\color{red}v}_{2{\color{blue}j}}^{{\color{teal}s}_{2{\color{blue}j}+1}-{\color{teal}s}_{2{\color{blue}j}}},\\
&={\color{cyan}\xi}_2    \prod_{{\color{violet}i}=0}^{{\color{blue}j}-1} {\color{red}v}_{2{\color{violet}i}+1}^{({\color{blue}j}-{\color{violet}i}){\color{red}m}} {\color{red}v}_{2{\color{violet}i}+2}^{ ({\color{blue}j}-{\color{violet}i}){\color{red}m}-1 }\\
&={\color{cyan}\xi}_2  
\prod_{{\color{violet}i}=0}^{{\color{blue}j}-1} \left({\color{red}v}_{2{\color{violet}i}+1} {\color{red}v}_{2{\color{violet}i}+2} \right)^{({\color{blue}j}-{\color{violet}i}){\color{red}m}}
\prod_{{\color{violet}i}=0}^{{\color{blue}j}-1}  {\color{red}v}_{2{\color{violet}i}+2}^{ -1 }\\
&={\color{cyan}\xi}_2  
\prod_{{\color{violet}i}=0}^{{\color{blue}j}-1} \left( {\color{cyan}t}^{\color{red}m}\right)^{({\color{blue}j}-{\color{violet}i}){\color{red}m}}
/ \prod_{{\color{violet}i}=0}^{{\color{blue}j}-1}   \frac{ (-1)^{{\color{red}m}+1} {\color{cyan}t}^{{\color{red}m}}[{\color{violet}i}+1]_{{\color{cyan}t}^{\color{red}m}}   }{ [{\color{violet}i}+2]_{{\color{cyan}t}^{\color{red}m}}}\\
&=(-1)^{({\color{red}m}+1)({\color{red}m}{\color{blue}j}+{\color{red}m}-2)/2}  {\color{cyan}t}^{{\color{red}m}{\color{blue}j}({\color{red}m}{\color{blue}j}+{\color{red}m}-2)/2}  [{\color{blue}j}+1]_{{\color{cyan}t}^{\color{red}m}},
\end{align*}
where ${\color{cyan}\xi}_2 =(-1)^{({\color{blue}j}+1)({\color{red}m}-2)({\color{red}m}-1)/2}$,
and
\begin{align*}
{\color{violet}H}_{{\color{teal}s}_{2{\color{blue}j}+2}}({\color{blue}F}({\color{magenta}q}))
&=(-1)^{{\color{red}\epsilon}_{2{\color{blue}j}+2}}  {\color{red}v}_1^{{\color{teal}s}_{2{\color{blue}j}+2}-{\color{teal}s}_1} {\color{red}v}_2^{{\color{teal}s}_{2{\color{blue}j}+2}-{\color{teal}s}_2} \cdots {\color{red}v}_{2{\color{blue}j}+1}^{{\color{teal}s}_{2{\color{blue}j}+2}-{\color{teal}s}_{2{\color{blue}j}+1}},\\
&=(-1)^{{\color{red}\epsilon}_{2{\color{blue}j}+2}}   \prod_{{\color{violet}i}=0}^{{\color{blue}j}-1} {\color{red}v}_{2{\color{violet}i}+1}^{{\color{teal}s}_{2{\color{blue}j}+2}-{\color{teal}s}_{2{\color{violet}i}+1}} {\color{red}v}_{2{\color{violet}i}+2}^{{\color{teal}s}_{2{\color{blue}j}+2}-{\color{teal}s}_{2{\color{violet}i}+2}}   \times {\color{red}v}_{2{\color{blue}j}+1}^{{\color{teal}s}_{2{\color{blue}j}+2}-{\color{teal}s}_{2{\color{blue}j}+1}}\\
&={\color{cyan}\xi}_2  {\color{red}v}_{2{\color{blue}j}+1} 
\prod_{{\color{violet}i}=0}^{{\color{blue}j}-1} \left({\color{red}v}_{2{\color{violet}i}+1} {\color{red}v}_{2{\color{violet}i}+2}  \right)^{({\color{blue}j}-{\color{violet}i}){\color{red}m}+1}
\prod_{{\color{violet}i}=0}^{{\color{blue}j}-1}  {\color{red}v}_{2{\color{violet}i}+2}^{ -1 }\\
&={\color{cyan}\xi}_2 {\color{red}v}_{2{\color{blue}j}+1} 
\prod_{{\color{violet}i}=0}^{{\color{blue}j}-1} \left( {\color{cyan}t}^{\color{red}m}\right)^{({\color{blue}j}-{\color{violet}i}){\color{red}m}+1}
/ \prod_{{\color{violet}i}=0}^{{\color{blue}j}-1}   \frac{ (-1)^{{\color{red}m}+1} {\color{cyan}t}^{{\color{red}m}}[{\color{violet}i}+1]_{{\color{cyan}t}^{\color{red}m}}   }{ [{\color{violet}i}+2]_{{\color{cyan}t}^{\color{red}m}}}\\
&=(-1)^{({\color{blue}j}+1){\color{red}m}({\color{red}m}+1)/2}    {\color{cyan}t}^{{\color{red}m}^2{\color{blue}j}({\color{blue}j}+1)/2} [{\color{blue}j}+2]_{{\color{cyan}t}^{\color{red}m}}. \qedhere
\end{align*}
\end{proof}

\begin{proof}[Proof of Theorem \ref{th:main:Det3}]
We apply the second part of Theorem \ref{th:super2} to the ${\color{violet}H}$-fraction from Lemma \ref{lem:HFrac:Q3} corresponding to ${({\color{blue}\gamma}({\color{cyan}t},{\color{magenta}q})-1)^{\color{red}m}}/{{\color{magenta}q}^3}$. In this setting, we have ${\color{teal}s}_0 = 0$,
\begin{align*}
{\color{teal}s}_{3{\color{blue}j}+1}
&={\color{olive}k}_0+{\color{olive}k}_1+\cdots + {\color{olive}k}_{3{\color{blue}j}}+3{\color{blue}j}+1\\
	&=({\color{blue}j}+1)({\color{red}m}-3) + 3{\color{blue}j}+1
=({\color{blue}j}+1){\color{red}m} -2,\\
{\color{teal}s}_{3{\color{blue}j}+2}&={\color{teal}s}_{3{\color{blue}j}+1} +  {\color{olive}k}_{3{\color{blue}j}+1}+1
=({\color{blue}j}+1){\color{red}m} -1,\\
{\color{teal}s}_{3{\color{blue}j}+3}&={\color{teal}s}_{3{\color{blue}j}+2} + {\color{olive}k}_{3{\color{blue}j}+2}+1
=({\color{blue}j}+1){\color{red}m},
\end{align*}
and ${\color{red}\epsilon}_0 =0$,
\begin{align*}
	{\color{red}\epsilon}_{3{\color{blue}j}+1} & = {\color{red}\epsilon}_{3{\color{blue}j}+2} = {\color{red}\epsilon}_{3{\color{blue}j}+3}\\
	&=\sum_{{\color{violet}i}=0}^{3{\color{blue}j}} {{\color{olive}k}_{\color{violet}i}({\color{olive}k}_{\color{violet}i}+1)/2} =({\color{blue}j}+1) ({\color{red}m}-3)({\color{red}m}-2)/2.
\end{align*}
We can verify that
\begin{align*}
{\color{red}v}_{3{\color{violet}i}+1} {\color{red}v}_{3{\color{violet}i}+2} {\color{red}v}_{3{\color{violet}i}+3} &={\color{cyan}t}^{\color{red}m},\\
{\color{red}v}_{3{\color{violet}i}+2} {\color{red}v}_{3{\color{violet}i}+3}^2 &=- {\color{cyan}t}^{2{\color{red}m}}[{\color{violet}i}+1]^2_{{\color{cyan}t}^{\color{red}m}}   / [{\color{violet}i}+2]^2_{{\color{cyan}t}^{\color{red}m}},\\
{\color{red}v}_{3{\color{blue}j}+1}^2 {\color{red}v}_{3{\color{blue}j}+2}  
&=-     [{\color{blue}j}+2]^2_{{\color{cyan}t}^{\color{red}m}} / [{\color{blue}j}+1]^2_{{\color{cyan}t}^{\color{red}m}}.
\end{align*}
Since ${\color{red}v}_0=1$, we have ${\color{violet}H}_0 =1$,
\begin{align*}
{\color{violet}H}_{{\color{teal}s}_{3{\color{blue}j}+1}}({\color{blue}F}({\color{magenta}q}))
&=(-1)^{{\color{red}\epsilon}_{3{\color{blue}j}+1}}  {\color{red}v}_1^{{\color{teal}s}_{3{\color{blue}j}+1}-{\color{teal}s}_1} {\color{red}v}_2^{{\color{teal}s}_{3{\color{blue}j}+1}-{\color{teal}s}_2} \cdots {\color{red}v}_{3{\color{blue}j}}^{{\color{teal}s}_{3{\color{blue}j}+1}-{\color{teal}s}_{3{\color{blue}j}}},\\
&=(-1)^{{\color{red}\epsilon}_{3{\color{blue}j}+1}}   \prod_{{\color{violet}i}=0}^{{\color{blue}j}-1} {\color{red}v}_{3{\color{violet}i}+1}^{{\color{teal}s}_{3{\color{blue}j}+1}-{\color{teal}s}_{3{\color{violet}i}+1}} {\color{red}v}_{3{\color{violet}i}+2}^{{\color{teal}s}_{3{\color{blue}j}+1}-{\color{teal}s}_{3{\color{violet}i}+2}} {\color{red}v}_{3{\color{violet}i}+3}^{{\color{teal}s}_{3{\color{blue}j}+1}-{\color{teal}s}_{3{\color{violet}i}+3}}\\
&={\color{cyan}\xi}_3   \prod_{{\color{violet}i}=0}^{{\color{blue}j}-1} {\color{red}v}_{3{\color{violet}i}+1}^{({\color{blue}j}-{\color{violet}i}){\color{red}m}} {\color{red}v}_{3{\color{violet}i}+2}^{ ({\color{blue}j}-{\color{violet}i}){\color{red}m}-1 } {\color{red}v}_{3{\color{violet}i}+3}^{({\color{blue}j}-{\color{violet}i}){\color{red}m}-2}\\
&={\color{cyan}\xi}_3  
\prod_{{\color{violet}i}=0}^{{\color{blue}j}-1} \left({\color{red}v}_{3{\color{violet}i}+1} {\color{red}v}_{3{\color{violet}i}+2} {\color{red}v}_{3{\color{violet}i}+3} )\right)^{({\color{blue}j}-{\color{violet}i}){\color{red}m}}
\prod_{{\color{violet}i}=0}^{{\color{blue}j}-1}  {\color{red}v}_{3{\color{violet}i}+2}^{ -1 } {\color{red}v}_{3{\color{violet}i}+3}^{-2}\\
&={\color{cyan}\xi}_3  
\prod_{{\color{violet}i}=0}^{{\color{blue}j}-1} \left( {\color{cyan}t}^{\color{red}m}\right)^{({\color{blue}j}-{\color{violet}i}){\color{red}m}}
/ \prod_{{\color{violet}i}=0}^{{\color{blue}j}-1}   \frac{ - {\color{cyan}t}^{2{\color{red}m}}[{\color{violet}i}+1]^2_{{\color{cyan}t}^{\color{red}m}}   }{ [{\color{violet}i}+2]^2_{{\color{cyan}t}^{\color{red}m}}}\\
&=(-1)^{1+({\color{blue}j}+1){\color{red}m}({\color{red}m}-1)/2}  {\color{cyan}t}^{{\color{red}m}{\color{blue}j}({\color{red}m}{\color{blue}j}+{\color{red}m}-4)/2}  [{\color{blue}j}+1]^2_{{\color{cyan}t}^{\color{red}m}},
\end{align*}
where ${\color{cyan}\xi}_3 =(-1)^{({\color{blue}j}+1)({\color{red}m}-3)({\color{red}m}-2)/2}$,
and
\begin{align*}
{\color{violet}H}_{{\color{teal}s}_{3{\color{blue}j}+2}}({\color{blue}F}({\color{magenta}q}))
&=(-1)^{{\color{red}\epsilon}_{3{\color{blue}j}+2}}  {\color{red}v}_1^{{\color{teal}s}_{3{\color{blue}j}+2}-{\color{teal}s}_1} {\color{red}v}_2^{{\color{teal}s}_{3{\color{blue}j}+2}-{\color{teal}s}_2} \cdots {\color{red}v}_{3{\color{blue}j}+1}^{{\color{teal}s}_{3{\color{blue}j}+2}-{\color{teal}s}_{3{\color{blue}j}+1}},\\
&=(-1)^{{\color{red}\epsilon}_{3{\color{blue}j}+2}}   \prod_{{\color{violet}i}=0}^{{\color{blue}j}-1} {\color{red}v}_{3{\color{violet}i}+1}^{{\color{teal}s}_{3{\color{blue}j}+2}-{\color{teal}s}_{3{\color{violet}i}+1}} {\color{red}v}_{3{\color{violet}i}+2}^{{\color{teal}s}_{3{\color{blue}j}+2}-{\color{teal}s}_{3{\color{violet}i}+2}} {\color{red}v}_{3{\color{violet}i}+3}^{{\color{teal}s}_{3{\color{blue}j}+2}-{\color{teal}s}_{3{\color{violet}i}+3}}  \times {\color{red}v}_{3{\color{blue}j}+1}^{{\color{teal}s}_{3{\color{blue}j}+2}-{\color{teal}s}_{3{\color{blue}j}+1}}\\
&={\color{cyan}\xi}_3  {\color{red}v}_{3{\color{blue}j}+1}   \prod_{{\color{violet}i}=0}^{{\color{blue}j}-1} {\color{red}v}_{3{\color{violet}i}+1}^{({\color{blue}j}-{\color{violet}i}){\color{red}m}+1} {\color{red}v}_{3{\color{violet}i}+2}^{ ({\color{blue}j}-{\color{violet}i}){\color{red}m} } {\color{red}v}_{3{\color{violet}i}+3}^{({\color{blue}j}-{\color{violet}i}){\color{red}m}-1}\\
&={\color{cyan}\xi}_3  {\color{red}v}_{3{\color{blue}j}+1} 
\prod_{{\color{violet}i}=0}^{{\color{blue}j}-1} \left({\color{red}v}_{3{\color{violet}i}+1} {\color{red}v}_{3{\color{violet}i}+2} {\color{red}v}_{3{\color{violet}i}+3} \right)^{({\color{blue}j}-{\color{violet}i}){\color{red}m}+1}
\prod_{{\color{violet}i}=0}^{{\color{blue}j}-1}  {\color{red}v}_{3{\color{violet}i}+2}^{ -1 } {\color{red}v}_{3{\color{violet}i}+3}^{-2}\\
&={\color{cyan}\xi}_3 {\color{red}v}_{3{\color{blue}j}+1} 
\prod_{{\color{violet}i}=0}^{{\color{blue}j}-1} \left( {\color{cyan}t}^{\color{red}m}\right)^{({\color{blue}j}-{\color{violet}i}){\color{red}m}+1}
/ \prod_{{\color{violet}i}=0}^{{\color{blue}j}-1}   \frac{ - {\color{cyan}t}^{2{\color{red}m}}[{\color{violet}i}+1]^2_{{\color{cyan}t}^{\color{red}m}}   }{ [{\color{violet}i}+2]^2_{{\color{cyan}t}^{\color{red}m}}}\\
&=(-1)^{{\color{red}m}-1+({\color{blue}j}+1){\color{red}m}({\color{red}m}-1)/2} {\color{teal}R}({\color{blue}j}) {\color{cyan}t}^{{\color{red}m}{\color{blue}j}({\color{red}m}{\color{blue}j}+{\color{red}m}-2)/2},
\end{align*}
and
\begin{align*}
{\color{violet}H}_{{\color{teal}s}_{3{\color{blue}j}+3}}({\color{blue}F}({\color{magenta}q}))
&=(-1)^{{\color{red}\epsilon}_{3{\color{blue}j}+3}}  {\color{red}v}_1^{{\color{teal}s}_{3{\color{blue}j}+3}-{\color{teal}s}_1} {\color{red}v}_2^{{\color{teal}s}_{3{\color{blue}j}+3}-{\color{teal}s}_2} \cdots {\color{red}v}_{3{\color{blue}j}+2}^{{\color{teal}s}_{3{\color{blue}j}+3}-{\color{teal}s}_{3{\color{blue}j}+2}},\\
&=(-1)^{{\color{red}\epsilon}_{3{\color{blue}j}+3}}   \prod_{{\color{violet}i}=0}^{{\color{blue}j}-1} {\color{red}v}_{3{\color{violet}i}+1}^{{\color{teal}s}_{3{\color{blue}j}+3}-{\color{teal}s}_{3{\color{violet}i}+1}} {\color{red}v}_{3{\color{violet}i}+2}^{{\color{teal}s}_{3{\color{blue}j}+3}-{\color{teal}s}_{3{\color{violet}i}+2}} {\color{red}v}_{3{\color{violet}i}+3}^{{\color{teal}s}_{3{\color{blue}j}+3}-{\color{teal}s}_{3{\color{violet}i}+3}}   \\
	& \quad \times 
{\color{red}v}_{3{\color{blue}j}+1}^{{\color{teal}s}_{3{\color{blue}j}+3}-{\color{teal}s}_{3{\color{blue}j}+1}}
{\color{red}v}_{3{\color{blue}j}+2}^{{\color{teal}s}_{3{\color{blue}j}+3}-{\color{teal}s}_{3{\color{blue}j}+2}}\\
&={\color{cyan}\xi}_3 {\color{red}v}_{3{\color{blue}j}+1}^2 {\color{red}v}_{3{\color{blue}j}+2} 
\prod_{{\color{violet}i}=0}^{{\color{blue}j}-1} \left({\color{red}v}_{3{\color{violet}i}+1} {\color{red}v}_{3{\color{violet}i}+2} {\color{red}v}_{3{\color{violet}i}+3} \right)^{({\color{blue}j}-{\color{violet}i}){\color{red}m}+2}
\prod_{{\color{violet}i}=0}^{{\color{blue}j}-1}  {\color{red}v}_{3{\color{violet}i}+2}^{ -1 } {\color{red}v}_{3{\color{violet}i}+3}^{-2}\\
&={\color{cyan}\xi}_3  \frac{[{\color{blue}j}+2]^2_{{\color{cyan}t}^{\color{red}m}}}{[{\color{blue}j}+1]^2_{{\color{cyan}t}^{\color{red}m}}}
\prod_{{\color{violet}i}=0}^{{\color{blue}j}-1} \left( {\color{cyan}t}^{\color{red}m}\right)^{({\color{blue}j}-{\color{violet}i}){\color{red}m}+2}
/ \prod_{{\color{violet}i}=0}^{{\color{blue}j}-1}   \frac{ - {\color{cyan}t}^{2{\color{red}m}}[{\color{violet}i}+1]^2_{{\color{cyan}t}^{\color{red}m}}   }{ [{\color{violet}i}+2]^2_{{\color{cyan}t}^{\color{red}m}}}\\
&=(-1)^{({\color{blue}j}+1){\color{red}m}({\color{red}m}-1)/2}  {\color{cyan}t}^{{\color{red}m}^2 {\color{blue}j}({\color{blue}j}+1)/2 }   [{\color{blue}j}+2]^2_{{\color{cyan}t}^{\color{red}m}}.\qedhere
\end{align*}
\end{proof}

% >>>

\section{Basic transformations for quadratic power series}\label{sec:quadra} % <<<

\begin{Lemma}\label{lem:ABC:operator}
Let ${\color{teal}A},{\color{magenta}B},{\color{cyan}C},{\color{purple}U}$ be polynomials.
Suppose ${\color{blue}F}$ is a quadratic power series satisfying
\begin{equation*}
0={\color{teal}A} + {\color{magenta}B} {\color{blue}F} + {\color{cyan}C} {\color{blue}F}^2.
\end{equation*}
Then the series ${\color{purple}U}{\color{blue}F}$, ${\color{blue}F}+{\color{purple}U}$, and ${\color{blue}F}^{\color{purple}n}$ are also quadratic power series, and they satisfy
\begin{align*}
0&={\color{teal}A}{\color{purple}U}^2 + {\color{magenta}B}{\color{purple}U} ({\color{purple}U}{\color{blue}F}) + {\color{cyan}C}  ({\color{purple}U}{\color{blue}F})^2,\\
0&=({\color{teal}A} -{\color{magenta}B}{\color{purple}U} +{\color{cyan}C}{\color{purple}U}^2)  + ({\color{magenta}B}   - 2{\color{cyan}C}{\color{purple}U})({\color{blue}F}+{\color{purple}U}) + {\color{cyan}C} ({\color{blue}F}+{\color{purple}U})^2,\\
0&={\color{teal}A}^{\color{purple}n} + (-1)^{{\color{purple}n}+1} {\color{cyan}L}_{\color{purple}n} ({\color{magenta}B}, -{\color{teal}A}{\color{cyan}C}) {\color{blue}F}^{\color{purple}n} + {\color{cyan}C}^{\color{purple}n} {\color{blue}F}^{2 {\color{purple}n}}.
\end{align*}
\end{Lemma}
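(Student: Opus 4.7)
The first two identities are one-line algebraic manipulations of the quadratic relation $0={\color{teal}A}+{\color{magenta}B}{\color{blue}F}+{\color{cyan}C}{\color{blue}F}^2$. For the first, I would multiply both sides by ${\color{purple}U}^2$ and regroup as $0={\color{teal}A}{\color{purple}U}^2+{\color{magenta}B}{\color{purple}U}\cdot({\color{purple}U}{\color{blue}F})+{\color{cyan}C}({\color{purple}U}{\color{blue}F})^2$. For the second, I would substitute ${\color{blue}F}=({\color{blue}F}+{\color{purple}U})-{\color{purple}U}$ into the relation and expand, then collect by powers of $({\color{blue}F}+{\color{purple}U})$; the three resulting coefficients are precisely ${\color{teal}A}-{\color{magenta}B}{\color{purple}U}+{\color{cyan}C}{\color{purple}U}^2$, ${\color{magenta}B}-2{\color{cyan}C}{\color{purple}U}$, and ${\color{cyan}C}$.

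The third identity carries the real content. The plan is to prove it by induction on ${\color{purple}n}$ driven by the Lucas recurrence \eqref{Lucas:rec}. I would set
$$
X_{\color{purple}n} := {\color{teal}A}^{\color{purple}n} + {\color{cyan}C}^{\color{purple}n}{\color{blue}F}^{2{\color{purple}n}}, \qquad Y_{\color{purple}n} := (-1)^{\color{purple}n}{\color{cyan}L}_{\color{purple}n}({\color{magenta}B},-{\color{teal}A}{\color{cyan}C})\,{\color{blue}F}^{\color{purple}n},
$$
so that the claim becomes $X_{\color{purple}n}=Y_{\color{purple}n}$ for every ${\color{purple}n}\ge 0$. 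The initial cases read $X_0=2=Y_0$ and $X_1={\color{teal}A}+{\color{cyan}C}{\color{blue}F}^2=-{\color{magenta}B}{\color{blue}F}=Y_1$, the middle equality being the quadratic relation itself. Multiplying the Lucas recurrence ${\color{cyan}L}_{\color{purple}n}={\color{magenta}B}\,{\color{cyan}L}_{{\color{purple}n}-1}-{\color{teal}A}{\color{cyan}C}\,{\color{cyan}L}_{{\color{purple}n}-2}$ by $(-1)^{\color{purple}n}{\color{blue}F}^{\color{purple}n}$ and adjusting signs gives $Y_{\color{purple}n}=-{\color{magenta}B}{\color{blue}F}\cdot Y_{{\color{purple}n}-1}-{\color{teal}A}{\color{cyan}C}{\color{blue}F}^2\cdot Y_{{\color{purple}n}-2}$. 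I would then check that $X_{\color{purple}n}$ satisfies this same three-term recurrence: expanding $-{\color{magenta}B}{\color{blue}F}\,X_{{\color{purple}n}-1}-{\color{teal}A}{\color{cyan}C}{\color{blue}F}^2\,X_{{\color{purple}n}-2}$ produces four terms that pair up and collapse via ${\color{magenta}B}{\color{blue}F}+{\color{cyan}C}{\color{blue}F}^2=-{\color{teal}A}$ and ${\color{magenta}B}{\color{blue}F}+{\color{teal}A}=-{\color{cyan}C}{\color{blue}F}^2$ into ${\color{teal}A}^{\color{purple}n}+{\color{cyan}C}^{\color{purple}n}{\color{blue}F}^{2{\color{purple}n}}=X_{\color{purple}n}$. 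Together with the matching base cases, this forces $X_{\color{purple}n}=Y_{\color{purple}n}$ throughout.

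The computations are routine; the only real obstacle will be the sign bookkeeping in the Lucas recurrence. Conceptually, the identity reflects the factorization ${\color{teal}A}+{\color{magenta}B} z+{\color{cyan}C} z^2 = {\color{cyan}C}(z-{\color{blue}F})(z-F')$ with a conjugate formal root $F'$: the pair $-{\color{cyan}C}{\color{blue}F},\,-{\color{cyan}C} F'$ are then the roots of $w^2-{\color{magenta}B} w+{\color{teal}A}{\color{cyan}C}$, so the classical formula ${\color{cyan}L}_{\color{purple}n}(x,s)=\alpha^{\color{purple}n}+\beta^{\color{purple}n}$ (for $\alpha,\beta$ the roots of $w^2-xw-s$) gives ${\color{cyan}L}_{\color{purple}n}({\color{magenta}B},-{\color{teal}A}{\color{cyan}C})=(-{\color{cyan}C}{\color{blue}F})^{\color{purple}n}+(-{\color{cyan}C} F')^{\color{purple}n}$, and the third identity falls out from $({\color{blue}F} F')^{\color{purple}n}=({\color{teal}A}/{\color{cyan}C})^{\color{purple}n}$ after multiplication by ${\color{blue}F}^{\color{purple}n}$. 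Keeping this factorization in mind makes the sign verification above transparent.
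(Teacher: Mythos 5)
Your proposal is correct. The first two identities are handled exactly as in the paper (they are immediate algebra). For the third, your route differs in execution from the paper's, though both are ultimately driven by the Lucas three-term recurrence $L_n = B\,L_{n-1} - AC\,L_{n-2}$. The paper first normalizes to the case $A=C=1$ via the substitution $F=\sqrt{A/C}\,G$, reads off $B_n = -(F^{n}+F^{-n})$ from $0=1+B_nF^n+F^{2n}$, derives $BB_n=-B_{n+1}-B_{n-1}$ from the product $(F+F^{-1})(F^{n}+F^{-n})$, matches this with the Lucas recurrence, and then undoes the normalization. Your direct induction on $X_n=A^n+C^nF^{2n}$ and $Y_n=(-1)^nL_n(B,-AC)F^n$ verifies the same three-term recurrence for both sides without ever introducing $F^{-1}$ or the formal square root $\sqrt{A/C}$, so it stays inside the ambient ring and sidesteps the (mild) bookkeeping of justifying those formal manipulations; the collapse of $-BF\,X_{n-1}-ACF^2X_{n-2}$ into $X_n$ via $BF+CF^2=-A$ and $BF+A=-CF^2$ checks out, as do the base cases $X_0=Y_0=2$ and $X_1=Y_1=-BF$. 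What the paper's normalization buys is the explicit power-sum form $B_n=-(F^n+F^{-n})$, which is precisely the structural fact you record in your closing remark about the conjugate root $F'$; either argument is complete.
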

\begin{proof}
The initial two situations, ${\color{purple}U}{\color{blue}F}$ and ${\color{blue}F}+{\color{purple}U}$, are straightforward, so we focus on the case ${\color{red}G}={\color{blue}F}^{\color{purple}n}$.

Begin with the special case ${\color{teal}A}={\color{cyan}C}=1$, and assume
\begin{align}
0&=1 + {\color{magenta}B} {\color{blue}F} +  {\color{blue}F}^2,\label{eq:quadraB}\\
0&=1 + {\color{magenta}B}_{\color{purple}n} {\color{blue}F}^{\color{purple}n} +  {\color{blue}F}^{2 {\color{purple}n}}.\label{eq:quadraBk}
\end{align}
Clearly, ${\color{magenta}B}_0=-2$ and ${\color{magenta}B}_1={\color{magenta}B}$. From \eqref{eq:quadraB} and \eqref{eq:quadraBk} we derive
\begin{align*}
{\color{magenta}B}{\color{magenta}B}_{\color{purple}n} &=\left(\frac{1}{{\color{blue}F}}  +  {\color{blue}F}\right)\left(\frac{1}{{\color{blue}F}^{\color{purple}n}}  +  {\color{blue}F}^{ {\color{purple}n}}\right)\\
	&=\left(\frac{1}{{\color{blue}F}^{{\color{purple}n}+1}} +  {\color{blue}F}^{{\color{purple}n}+1}\right) +  \left({\color{blue}F}^{{\color{purple}n}-1} + \frac{1}{{\color{blue}F}^{{\color{purple}n}-1}}\right)\\
&=-{\color{magenta}B}_{{\color{purple}n}+1} -{\color{magenta}B}_{{\color{purple}n}-1}.
\end{align*}
Comparing this with \eqref{Lucas:rec}, we obtain
\begin{align}
{\color{magenta}B}_{\color{purple}n} &=(-1)^{{\color{purple}n}+1} {\color{cyan}L}_{\color{purple}n}({\color{magenta}B},-1)\nonumber\\
&=(-1)^{{\color{purple}n}+1}\sum_{{\color{olive}k}=0}^{\lfloor {\color{purple}n}/2\rfloor} 
	\binom{{\color{purple}n}-{\color{olive}k}}{{\color{olive}k}} \frac{{\color{purple}n}}{{\color{purple}n}-{\color{olive}k}} (-1)^{\color{olive}k} {\color{magenta}B}^{{\color{purple}n}-2{\color{olive}k}}.\label{eq:specBn}
\end{align}

Now consider the general case, where
\begin{align}
0&={\color{teal}A} + {\color{magenta}B} {\color{blue}F} + {\color{cyan}C} {\color{blue}F}^2,\nonumber\\
0&={\color{teal}A}_{\color{purple}n} + {\color{magenta}B}_{\color{purple}n} {\color{blue}F}^{\color{purple}n} + {\color{cyan}C}_{\color{purple}n} {\color{blue}F}^{2 {\color{purple}n}}.\label{eq:generalBn}
\end{align}
Set ${\color{blue}F} = \sqrt{{\color{teal}A}/{\color{cyan}C}}\, {\color{red}G}$. Then the first relation becomes
\begin{align*}
0&={\color{teal}A} + {\color{magenta}B} \sqrt{\frac{{\color{teal}A}}{{\color{cyan}C}}}\, {\color{red}G} + {\color{cyan}C}   \left(\sqrt{\frac{{\color{teal}A}}{{\color{cyan}C}}}\, {\color{red}G}\right)^2,\\
0&=1 + \frac{{\color{magenta}B}}{\sqrt{{\color{teal}A}{\color{cyan}C}}}\, {\color{red}G} +     {\color{red}G}^2.
\end{align*}
By \eqref{eq:specBn} it follows that
\begin{equation}\label{eq:Gn}
0=1 + {\bar {\color{magenta}B}}_{\color{purple}n} {\color{red}G}^{\color{purple}n} +     {\color{red}G}^{2{\color{purple}n}}, 
\end{equation}
where ${\bar {\color{magenta}B}}_0=-2$ and
\begin{equation*}
{\bar {\color{magenta}B}}_{\color{purple}n} = (-1)^{{\color{purple}n}+1}\sum_{{\color{olive}k}=0}^{\lfloor {\color{purple}n}/2\rfloor} 
\binom{{\color{purple}n}-{\color{olive}k}}{{\color{olive}k}} \frac{{\color{purple}n}}{{\color{purple}n}-{\color{olive}k}} (-1)^{\color{olive}k} \left(\frac{{\color{magenta}B}}{\sqrt{{\color{teal}A}{\color{cyan}C}}}\right)^{{\color{purple}n}-2{\color{olive}k}}.
\end{equation*}
On the other hand, from \eqref{eq:Gn} we get
\begin{align*}
0&=1 + {\bar {\color{magenta}B}}_{\color{purple}n} \left(\sqrt{\frac{{\color{cyan}C}}{{\color{teal}A}}}\right)^{\!{\color{purple}n}} {\color{blue}F}^{\color{purple}n} +    \left(\frac{{\color{cyan}C}}{{\color{teal}A}}\right)^{\!{\color{purple}n}} {\color{blue}F}^{2{\color{purple}n}}\\
&={\color{teal}A}^{\color{purple}n} + {\bar {\color{magenta}B}}_{\color{purple}n} (\sqrt{{\color{teal}A}{\color{cyan}C}})^{\color{purple}n}  {\color{blue}F}^{\color{purple}n} +    {\color{cyan}C}^{\color{purple}n} {\color{blue}F}^{2{\color{purple}n}}.
\end{align*}
Comparing this with \eqref{eq:generalBn}, we identify ${\color{teal}A}_{\color{purple}n}={\color{teal}A}^{\color{purple}n}$, ${\color{cyan}C}_{\color{purple}n}={\color{cyan}C}^{\color{purple}n}$, and
\begin{align*}
{\color{magenta}B}_{\color{purple}n} 
	&={\bar {\color{magenta}B}}_{\color{purple}n} (\sqrt{{\color{teal}A}{\color{cyan}C}})^{\color{purple}n}\\
&=(-1)^{{\color{purple}n}+1}\sum_{{\color{olive}k}=0}^{\lfloor {\color{purple}n}/2\rfloor} 
\binom{{\color{purple}n}-{\color{olive}k}}{{\color{olive}k}} \frac{{\color{purple}n}}{{\color{purple}n}-{\color{olive}k}} (-1)^{\color{olive}k} \left(\frac{{\color{magenta}B}}{\sqrt{{\color{teal}A}{\color{cyan}C}}}\right)^{{\color{purple}n}-2{\color{olive}k}}
 (\sqrt{{\color{teal}A}{\color{cyan}C}})^{\color{purple}n}\\
&=(-1)^{{\color{purple}n}+1}\sum_{{\color{olive}k}=0}^{\lfloor {\color{purple}n}/2\rfloor} 
\binom{{\color{purple}n}-{\color{olive}k}}{{\color{olive}k}} \frac{{\color{purple}n}}{{\color{purple}n}-{\color{olive}k}} (-1)^{\color{olive}k} 
{\color{magenta}B}^{{\color{purple}n}-2{\color{olive}k}}({\color{teal}A}{\color{cyan}C})^{{\color{olive}k}}\\
&=(-1)^{{\color{purple}n}+1}{\color{cyan}L}_{\color{purple}n} ({\color{magenta}B}, -{\color{teal}A}{\color{cyan}C}).\qedhere
\end{align*}
\end{proof}

\begin{Corollary}\label{cor:F}
The power series
\begin{equation*}
{\color{blue}F}({\color{magenta}q})=\frac{({\color{blue}\gamma}({\color{cyan}t},{\color{magenta}q})-1)^{\color{red}m}}{{\color{magenta}q}^{{\color{red}m}_0}}
\end{equation*}
satisfies the following quadartic equation
\begin{equation*}
0= - {\color{magenta}q}^{{\color{red}m}-{\color{red}m}_0} +     {\color{magenta}\beta}({\color{red}m};{\color{cyan}t},{\color{magenta}q})  {\color{blue}F}  - {\color{cyan}t}^{\color{red}m}  {\color{magenta}q}^{{\color{red}m}+{\color{red}m}_0}  {\color{blue}F}^{2}.
% ${\color{teal}A}=-{\color{magenta}q}^{{\color{red}m}-3}, {\color{magenta}B}={\color{magenta}\beta}({\color{red}m};{\color{cyan}t},{\color{magenta}q}), {\color{cyan}C}=-{\color{cyan}t}^{\color{red}m}{\color{magenta}q}^{{\color{red}m}+3}$.
\end{equation*}
\end{Corollary}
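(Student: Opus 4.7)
The strategy is to apply in sequence the three transformations provided by Lemma \ref{lem:ABC:operator}, starting from the defining quadratic relation \eqref{eq:Narayana} for $\gamma(t,q)$, which may be written as $A + B\gamma + C\gamma^2 = 0$ with $A = -1$, $B = 1 - q + tq$, and $C = -tq$.

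First, I would perform the translation $\gamma \mapsto \gamma - 1$ by applying part~2 of Lemma \ref{lem:ABC:operator} with $U = -1$. A direct computation yields the new coefficients
\[
A' = A - BU + CU^2 = -q, \qquad B' = B - 2CU = 1 - q - tq, \qquad C' = -tq,
\]
so that $\gamma - 1$ satisfies $-q + (1 - q - tq)(\gamma - 1) - tq(\gamma - 1)^2 = 0$. In particular $-A'C' = -tq^2$, which will serve as the second argument of the Lucas polynomial in the next step.

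Next, I would raise to the $m$-th power using part~3 of Lemma \ref{lem:ABC:operator} with $n = m$. This produces the quadratic relation for $(\gamma - 1)^m$:
\[
(-q)^m + (-1)^{m+1} L_m(1 - q - tq,\; -tq^2)\,(\gamma - 1)^m + (-tq)^m (\gamma - 1)^{2m} = 0.
\]
Invoking Lemma \ref{lem:beta:Lm} to identify $L_m(1 - q - tq, -tq^2) = \beta(m; t, q)$ and multiplying through by $(-1)^{m+1}$, the equation simplifies to
\[
-q^m + \beta(m;t,q)\,(\gamma - 1)^m - t^m q^m (\gamma - 1)^{2m} = 0.
\]

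Finally, I would substitute $(\gamma - 1)^m = q^{m_0} F$ (equivalently, apply part~1 of Lemma \ref{lem:ABC:operator} with $U = q^{-m_0}$) and divide through by $q^{m_0}$, which yields
\[
-q^{m - m_0} + \beta(m;t,q)\, F - t^m q^{m + m_0} F^2 = 0,
\]
as desired. There is no genuine obstacle in this argument: once the translation, the power transformation, and the Lucas-polynomial identification of $\beta$ via Lemma \ref{lem:beta:Lm} are in place, the whole proof reduces to careful bookkeeping of signs and powers of $q$.
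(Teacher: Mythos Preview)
Your proof is correct and follows essentially the same route as the paper: translate $\gamma \mapsto \gamma-1$, raise to the $m$-th power via Lemma~\ref{lem:ABC:operator}, identify the Lucas polynomial with $\beta(m;t,q)$ using Lemma~\ref{lem:beta:Lm}, and finally rescale by $q^{-m_0}$. The sign and power bookkeeping is all correct.
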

\begin{proof}
	Invoking Lemma \ref{lem:ABC:operator} in the case ${\color{blue}F}+{\color{purple}U}$ and applying it to \eqref{eq:Narayana}, we obtain
\begin{equation*}
	-{\color{magenta}q}+ (1 - {\color{magenta}q} - {\color{cyan}t}{\color{magenta}q})\,({\color{blue}\gamma}({\color{cyan}t},{\color{magenta}q})-1) - {\color{cyan}t}{\color{magenta}q}\,({\color{blue}\gamma}({\color{cyan}t},{\color{magenta}q})-1)^2 = 0.
\end{equation*}
	Next, applying Lemma \ref{lem:ABC:operator} once more, now in the case ${\color{blue}F}^{\color{purple}n}$, and using Lemma \ref{lem:beta:Lm}, we deduce
\begin{equation*}
	-{\color{magenta}q}^{\color{red}m}+ {\color{magenta}\beta}({\color{red}m};{\color{cyan}t},{\color{magenta}q}) ({\color{blue}\gamma}({\color{cyan}t},{\color{magenta}q})-1)^{\color{red}m} -  ({\color{cyan}t}{\color{magenta}q})^{\color{red}m}\,({\color{blue}\gamma}({\color{cyan}t},{\color{magenta}q})-1)^{2{\color{red}m}} = 0.
\end{equation*}
The statement of the corollary follows by one further application of the same lemma, this time in the case ${\color{blue}F}{\color{purple}U}$.
\end{proof}

% >>>

\section{Hankel continued fraction of quadratic series} % <<<
To derive the Hankel continued fraction of a quadratic series, we recall the main idea presented in \cite{Han2016Adv, Han2025Pedon}. Suppose ${\color{blue}F}({\color{magenta}q})$ is a power series that satisfies the quadratic equation
$${\color{teal}A}({\color{magenta}q})+{\color{magenta}B}({\color{magenta}q}){\color{blue}F}({\color{magenta}q})+ {\color{cyan}C}({\color{magenta}q}){\color{blue}F}({\color{magenta}q})^2 = 0.$$
Then ${\color{blue}F}({\color{magenta}q})$ can be expressed in the form
$$
{\color{blue}F}({\color{magenta}q}) = \frac{-{\color{purple}a} {\color{magenta}q}^{\color{olive}k}}{{\color{cyan}D}({\color{magenta}q}) - {\color{magenta}q}^{{\color{olive}k}+{\color{violet}\delta}} {\color{red}G}({\color{magenta}q})},
$$
where ${\color{red}G}({\color{magenta}q})$ is another power series that satisfies a transformed quadratic equation
$${\color{teal}A}^*({\color{magenta}q})+{\color{magenta}B}^*({\color{magenta}q}){\color{red}G}({\color{magenta}q})+ {\color{cyan}C}^*({\color{magenta}q}){\color{red}G}({\color{magenta}q})^2 = 0.$$
Moreover, the quantities ${\color{teal}A}^*, {\color{magenta}B}^*, {\color{cyan}C}^*, {\color{olive}k}, {\color{purple}a},$ and ${\color{cyan}D}$ can be computed explicitly by means of the following algorithm.

\medskip

{\bf Algorithm} \hbox{\tt [NextABC]} (for ${\color{violet}\delta}=2$)

\smallskip\noindent

Prototype: $({\color{teal}A}^*, {\color{magenta}B}^*, {\color{cyan}C}^*; {\color{olive}k}, {\color{purple}a}, {\color{cyan}D})=\hbox{\tt NextABC}({\color{teal}A},{\color{magenta}B},{\color{cyan}C})$

\smallskip\noindent

Input: ${\color{teal}A}({\color{magenta}q}), {\color{magenta}B}({\color{magenta}q}), {\color{cyan}C}({\color{magenta}q})\in \QQ[{\color{magenta}q}]$ three polynomials such that 
${\color{magenta}B}(0)=1,$ ${\color{cyan}C}(0)=0$, ${\color{cyan}C}({\color{magenta}q})\not=0, {\color{teal}A}({\color{magenta}q})\not=0$;

\smallskip\noindent

Output: ${\color{teal}A}^*({\color{magenta}q}), {\color{magenta}B}^*({\color{magenta}q}), {\color{cyan}C}^*({\color{magenta}q})\in\QQ[{\color{magenta}q}]$, ${\color{olive}k}\in \NN^+$, ${\color{purple}a}\not=0 \in\QQ$,
${\color{cyan}D}({\color{magenta}q})\in\QQ[{\color{magenta}q}]$ a polynomial of degree less than or equal to ${\color{olive}k}+1$ such that ${\color{cyan}D}(0)=1$.

\smallskip\noindent

Step 1 [Define ${\color{olive}k}, {\color{purple}a}$]. Since ${\color{teal}A}({\color{magenta}q})\not=0$, let ${\color{teal}A}({\color{magenta}q})={\color{purple}a}{\color{magenta}q}^{\color{olive}k} + {\color{magenta}O}({\color{magenta}q}^{{\color{olive}k}+1})$ with ${\color{purple}a}\not=0$.

\smallskip\noindent

Step 2 [Define ${\color{cyan}D}$].  Define ${\color{cyan}D}({\color{magenta}q})$  by
\begin{equation}\label{defD2}\frac{{\color{purple}a}{\color{magenta}q}^{\color{olive}k}{\color{magenta}B}}{{\color{teal}A}}  - \frac{{\color{purple}a}{\color{magenta}q}^{\color{olive}k}  {\color{cyan}C}}{ {\color{magenta}B}} ={\color{cyan}D}({\color{magenta}q})  +  {\color{magenta}O}({\color{magenta}q}^{{\color{olive}k}+2 }),  
\end{equation}
where ${\color{cyan}D}({\color{magenta}q})$ is a polynomial of degree less than or equal to ${\color{olive}k}+1$ such that ${\color{cyan}D}(0)=1$.

\smallskip\noindent

Step 3. [Define ${\color{teal}A}^*, {\color{magenta}B}^*, {\color{cyan}C}^*$]. Let
\begin{align}
{{\color{teal}A}^*}({\color{magenta}q}) &=\bigl(-{\color{cyan}D}^2{\color{teal}A}/{\color{purple}a}+{\color{magenta}B}{\color{cyan}D} {\color{magenta}q}^{\color{olive}k}-{\color{cyan}C}{\color{purple}a}{\color{magenta}q}^{2{\color{olive}k}}\bigr)/{\color{magenta}q}^{2{\color{olive}k}+2};\label{eqAA}\\
	{{\color{magenta}B}^*} ({\color{magenta}q}) &=2{\color{teal}A}{\color{cyan}D}/({\color{purple}a}{\color{magenta}q}^{{\color{olive}k}}) - {\color{magenta}B} ;\label{eqBB}\\
	{{\color{cyan}C}^*} ({\color{magenta}q}) &=-{\color{teal}A}{\color{magenta}q}^{2}/{\color{purple}a}.\label{eqCC}
\end{align}
{\it Remark}. In Step 2, if ${\color{cyan}C} = {\color{magenta}q}^2 {\color{cyan}C}'$, then
\begin{equation*}
\frac{{\color{purple}a} {\color{magenta}q}^{\color{olive}k} {\color{magenta}B}}{{\color{teal}A}} = {\color{cyan}D}({\color{magenta}q}) + {\color{magenta}O}({\color{magenta}q}^{{\color{olive}k}+2}).
\end{equation*}
Similarly, if ${\color{teal}A} = {\color{purple}a} {\color{magenta}q}^{\color{olive}k}$, then
\begin{equation*}
{\color{magenta}B} = {\color{cyan}D}({\color{magenta}q}) + {\color{magenta}O}({\color{magenta}q}^{{\color{olive}k}+2}).
\end{equation*}

By repeatedly applying Algorithm \hbox{\tt NextABC}, we generate a sequence of six-tuples  
\begin{equation}\label{sixtuples}
({\color{teal}A}_{{\color{purple}n}+1}, {\color{magenta}B}_{{\color{purple}n}+1}, {\color{cyan}C}_{{\color{purple}n}+1}; {\color{olive}k}_{{\color{purple}n}}, {\color{purple}a}_{{\color{purple}n}}, {\color{cyan}D}_{{\color{purple}n}}),
\end{equation}
satisfying, for each ${\color{purple}n}$,
\begin{equation}\label{ABCcheck}
({\color{teal}A}_{{\color{purple}n}+1}, {\color{magenta}B}_{{\color{purple}n}+1}, {\color{cyan}C}_{{\color{purple}n}+1}; {\color{olive}k}_{{\color{purple}n}}, {\color{purple}a}_{{\color{purple}n}}, {\color{cyan}D}_{{\color{purple}n}}) = \hbox{\tt NextABC}({\color{teal}A}_{{\color{purple}n}}, {\color{magenta}B}_{{\color{purple}n}}, {\color{cyan}C}_{{\color{purple}n}}).
\end{equation}
This yields an ${\color{violet}H}$-fraction expansion of ${\color{blue}F}({\color{magenta}q})$ of the form
$$
{\color{blue}F}({\color{magenta}q}) = \mathcal{{\color{violet}H}}(({\color{olive}k}_{{\color{blue}j}}), (-{\color{purple}a}_{{\color{blue}j}}), ({\color{red}u}_{{\color{blue}j}})),
$$
where ${\color{red}u}_{\color{blue}j}$ is determined by the relation $1+{\color{red}u}_{{\color{blue}j}}({\color{magenta}q}){\color{magenta}q} = {\color{cyan}D}_{{\color{blue}j}-1}$.

\medskip

We will employ this method to establish the ${\color{violet}H}$-fraction representations stated in Section \ref{sec:hfrac}. To do this, we first conjecture the sequence \eqref{sixtuples} from computational data obtained experimentally, and then confirm its validity by verifying the relations \eqref{ABCcheck}. In other words, we must check the three steps of the algorithm. 
%In our examples, the verifications of \eqref{eqAA}, \eqref{eqBB}, and \eqref{eqCC} are lengthy and intricate; however, they can be carried out with the help of a computer algebra system.

% >>>

\section{Proof of Lemma \ref{lem:HFrac:Q2}}\label{sec:Q2} % <<<
For ${\color{red}m}\geq 2$, let
\begin{equation*}
{\color{blue}F}({\color{magenta}q})=\frac{({\color{blue}\gamma}({\color{cyan}t},{\color{magenta}q})-1)^{\color{red}m}}{{\color{magenta}q}^2}.
\end{equation*}
By Corollary \ref{cor:F}, we have
\begin{equation*}
0= - {\color{magenta}q}^{{\color{red}m}-2} +     {\color{magenta}\beta}({\color{red}m},{\color{cyan}t},{\color{magenta}q})  {\color{blue}F}  - {\color{cyan}t}^{\color{red}m}  {\color{magenta}q}^{{\color{red}m}+2}  {\color{blue}F}^{2}.
\end{equation*}
Thus, we run Algorithm \hbox{\tt NextABC} starting from the initialization
$${\color{teal}A}_0=-{\color{magenta}q}^{{\color{red}m}-2},\quad {\color{magenta}B}_0={\color{magenta}\beta}({\color{red}m};{\color{cyan}t},{\color{magenta}q}),\quad {\color{cyan}C}_0=-{\color{cyan}t}^{\color{red}m}{\color{magenta}q}^{{\color{red}m}+2}.$$

\begin{Lemma}\label{lem:ABCq2}
If we use ${\color{teal}A}_{{\color{purple}n}}, {\color{magenta}B}_{{\color{purple}n}}, {\color{cyan}C}_{{\color{purple}n}}$ as the input to Algorithm \hbox{\tt NextABC},
then the outputs are
$$ {\color{teal}A}_{{\color{purple}n}+1}, {\color{magenta}B}_{{\color{purple}n}+1}, {\color{cyan}C}_{{\color{purple}n}+1}, {\color{olive}k}_{{\color{purple}n}}, {\color{purple}a}_{{\color{purple}n}}, {\color{cyan}D}_{{\color{purple}n}}, $$
where
${\color{teal}A}_{{\color{purple}n}}, {\color{magenta}B}_{{\color{purple}n}}, {\color{cyan}C}_{{\color{purple}n}}, {\color{olive}k}_{{\color{purple}n}}, {\color{purple}a}_{{\color{purple}n}}, {\color{cyan}D}_{{\color{purple}n}}$ for ${\color{purple}n} \geq 0$ are defined as follows:
\begin{align*}
	{\color{teal}A}_{0}&=-{\color{magenta}q}^{{\color{red}m}-2}, \qquad
	{\color{teal}A}_{2{\color{blue}j}}=\frac{ (-{\color{cyan}t})^{\color{red}m} {\color{magenta}q}^{{\color{red}m}-2} {\color{olive}J}_0}{{\color{olive}J}_1},\\
	{\color{teal}A}_{2{\color{blue}j}+1}&=\frac{(-1)^{\color{red}m}{\color{olive}J}_2}{{\color{olive}J}_1}  {\color{magenta}\beta}({\color{red}m};{\color{cyan}t},{\color{magenta}q})
 + 
(\frac{1-2{\color{olive}J}_2}{{\color{olive}J}_1}-\frac{{\color{cyan}t}^{{\color{red}m}({\color{blue}j}+1)}{\color{olive}J}_2}{{\color{olive}J}_1^2} ) {\color{magenta}q}^{\color{red}m},\\
	{\color{magenta}B}_{2{\color{blue}j}}&={\color{magenta}\beta}({\color{red}m};{\color{cyan}t},{\color{magenta}q}) + 2(\frac{1}{{\color{olive}J}_1} - 1) (-{\color{magenta}q})^{\color{red}m},\\
	{\color{magenta}B}_{2{\color{blue}j}+1}&={\color{magenta}\beta}({\color{red}m};{\color{cyan}t},{\color{magenta}q}) - 2(\frac{{\color{cyan}t}^{{\color{red}m}({\color{blue}j}+1)}}{{\color{olive}J}_1} + 1) (-{\color{magenta}q})^{\color{red}m},\\
	{\color{cyan}C}_{0}&=-{\color{cyan}t}^{\color{red}m} {\color{magenta}q}^{{\color{red}m}+2}, \qquad {\color{cyan}C}_{2{\color{blue}j}+1}=-{\color{magenta}q}^{\color{red}m},\\
	{\color{cyan}C}_{2{\color{blue}j}}&=-{\color{magenta}\beta}({\color{red}m};{\color{cyan}t},{\color{magenta}q}) {\color{magenta}q}^2 - ( \frac{1}{{\color{olive}J}_1}  - \frac{{\color{cyan}t}^{{\color{red}m}{\color{blue}j}}}{{\color{olive}J}_0}-2 ) (-{\color{magenta}q})^{{\color{red}m}+2}\\
	{\color{olive}k}_{2{\color{blue}j}}&={\color{red}m}-2, \qquad {\color{olive}k}_{2{\color{blue}j}+1}=0,\\
	{\color{purple}a}_{0}&=-1, \qquad
	{\color{purple}a}_{2{\color{blue}j}}=\frac{(-{\color{cyan}t})^{\color{red}m}  {\color{olive}J}_0}{ {\color{olive}J}_1},\qquad
	{\color{purple}a}_{2{\color{blue}j}+1}=\frac{(-1)^{\color{red}m}  {\color{olive}J}_2}{{\color{olive}J}_1},\\
	{\color{cyan}D}_{2{\color{blue}j}}&={\color{magenta}\beta}({\color{red}m};{\color{cyan}t},{\color{magenta}q}) - (-{\color{magenta}q})^{\color{red}m}(1+{\color{cyan}t}^{\color{red}m}), \qquad
	{\color{cyan}D}_{2{\color{blue}j}+1}=1,
\end{align*}
where, for brevity, we denote
\begin{align*}
{\color{olive}J}_0&=[{\color{blue}j}]_{{\color{cyan}t}^{\color{red}m}},\qquad
{\color{olive}J}_1=[{\color{blue}j}+1]_{{\color{cyan}t}^{\color{red}m}},\qquad
{\color{olive}J}_2=[{\color{blue}j}+2]_{{\color{cyan}t}^{\color{red}m}}.
\end{align*}

\end{Lemma}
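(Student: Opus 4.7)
The plan is to proceed by induction on ${\color{purple}n}$, handling the even and odd indices separately. Each induction step consists of verifying the three stages of Algorithm \hbox{\tt NextABC} when applied to the claimed $({\color{teal}A}_{{\color{purple}n}}, {\color{magenta}B}_{{\color{purple}n}}, {\color{cyan}C}_{{\color{purple}n}})$: reading off the leading monomial of ${\color{teal}A}_{\color{purple}n}$ to get $({\color{olive}k}_{\color{purple}n}, {\color{purple}a}_{\color{purple}n})$, producing ${\color{cyan}D}_{\color{purple}n}$ via the truncation identity \eqref{defD2}, and finally recovering $({\color{teal}A}_{{\color{purple}n}+1}, {\color{magenta}B}_{{\color{purple}n}+1}, {\color{cyan}C}_{{\color{purple}n}+1})$ through \eqref{eqAA}--\eqref{eqCC}. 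The base case ${\color{purple}n}=0$ is immediate: since ${\color{teal}A}_0=-{\color{magenta}q}^{{\color{red}m}-2}$ is a monomial, we get ${\color{olive}k}_0={\color{red}m}-2$, ${\color{purple}a}_0=-1$, and the remark after the algorithm reduces Step 2 to matching ${\color{magenta}B}_0={\color{magenta}\beta}({\color{magenta}q})$ modulo ${\color{magenta}q}^{\color{red}m}$ with the claimed ${\color{cyan}D}_0={\color{magenta}\beta}({\color{magenta}q})-(-{\color{magenta}q})^{\color{red}m}(1+{\color{cyan}t}^{\color{red}m})$.

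Step 1 in the induction is bookkeeping. For ${\color{purple}n}=2{\color{blue}j}$, the series ${\color{teal}A}_{2{\color{blue}j}}$ is the single monomial $(-{\color{cyan}t})^{\color{red}m}{\color{magenta}q}^{{\color{red}m}-2}{\color{olive}J}_0/{\color{olive}J}_1$, giving directly ${\color{olive}k}_{2{\color{blue}j}}={\color{red}m}-2$ and ${\color{purple}a}_{2{\color{blue}j}}=(-{\color{cyan}t})^{\color{red}m}{\color{olive}J}_0/{\color{olive}J}_1$. For ${\color{purple}n}=2{\color{blue}j}+1$, using ${\color{magenta}\beta}({\color{magenta}q})=1+O({\color{magenta}q})$, the constant term of ${\color{teal}A}_{2{\color{blue}j}+1}$ is $(-1)^{\color{red}m}{\color{olive}J}_2/{\color{olive}J}_1$, so ${\color{olive}k}_{2{\color{blue}j}+1}=0$ and ${\color{purple}a}_{2{\color{blue}j}+1}=(-1)^{\color{red}m}{\color{olive}J}_2/{\color{olive}J}_1$. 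Step 2 is also relatively short in both parities: when ${\color{purple}n}=2{\color{blue}j}$ the monomial structure of ${\color{teal}A}_{2{\color{blue}j}}$ lets the remark apply, so I only need to check that ${\color{magenta}B}_{2{\color{blue}j}}\equiv{\color{cyan}D}_{2{\color{blue}j}}\pmod{{\color{magenta}q}^{\color{red}m}}$, which holds because ${\color{magenta}B}_{2{\color{blue}j}}$ and ${\color{cyan}D}_{2{\color{blue}j}}$ differ from ${\color{magenta}\beta}({\color{magenta}q})$ only in the ${\color{magenta}q}^{\color{red}m}$ term; when ${\color{purple}n}=2{\color{blue}j}+1$ the vanishing condition ${\color{cyan}D}_{2{\color{blue}j}+1}=1+O({\color{magenta}q}^2)$ reduces to checking ${\color{teal}A}_{2{\color{blue}j}+1}'(0)/{\color{teal}A}_{2{\color{blue}j}+1}(0)={\color{magenta}B}_{2{\color{blue}j}+1}'(0)/{\color{magenta}B}_{2{\color{blue}j}+1}(0)$, which both equal $-{\color{red}m}(1+{\color{cyan}t})$ since the ${\color{magenta}q}^{\color{red}m}$ corrections do not contribute to the first derivative at $0$ for ${\color{red}m}\geq 2$.

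The bulk of the argument is Step 3. The easiest piece is ${\color{cyan}C}_{{\color{purple}n}+1}=-{\color{teal}A}_{\color{purple}n}{\color{magenta}q}^2/{\color{purple}a}_{\color{purple}n}$, which instantly produces ${\color{cyan}C}_{2{\color{blue}j}+1}=-{\color{magenta}q}^{\color{red}m}$ and, after expansion using the explicit ${\color{teal}A}_{2{\color{blue}j}+1}$, the stated ${\color{cyan}C}_{2{\color{blue}j}+2}$. The formula ${\color{magenta}B}_{{\color{purple}n}+1}=2{\color{teal}A}_{\color{purple}n}{\color{cyan}D}_{\color{purple}n}/({\color{purple}a}_{\color{purple}n}{\color{magenta}q}^{{\color{olive}k}_{\color{purple}n}})-{\color{magenta}B}_{\color{purple}n}$ yields the shift between ${\color{magenta}B}_{2{\color{blue}j}}$ and ${\color{magenta}B}_{2{\color{blue}j}+1}$ using the $q$-number identity $[{\color{blue}j}+2]_{{\color{cyan}t}^{\color{red}m}}=[{\color{blue}j}+1]_{{\color{cyan}t}^{\color{red}m}}+{\color{cyan}t}^{{\color{red}m}({\color{blue}j}+1)}$, and analogously for the other parity. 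The remaining and most involved task is the formula for ${\color{teal}A}_{{\color{purple}n}+1}$: one must expand
$$-{\color{cyan}D}_{\color{purple}n}^2{\color{teal}A}_{\color{purple}n}/{\color{purple}a}_{\color{purple}n}+{\color{magenta}B}_{\color{purple}n}{\color{cyan}D}_{\color{purple}n}{\color{magenta}q}^{{\color{olive}k}_{\color{purple}n}}-{\color{cyan}C}_{\color{purple}n}{\color{purple}a}_{\color{purple}n}{\color{magenta}q}^{2{\color{olive}k}_{\color{purple}n}},$$
show that it is divisible by ${\color{magenta}q}^{2{\color{olive}k}_{\color{purple}n}+2}$, and identify the quotient with the claimed compact expression for ${\color{teal}A}_{{\color{purple}n}+1}$.

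The hard part will be precisely this last identification for ${\color{teal}A}_{{\color{purple}n}+1}$. After substitution the numerator is a sizeable polynomial in ${\color{magenta}q}$, and the required simplification hinges on squaring and telescoping terms of ${\color{magenta}\beta}({\color{magenta}q})$, together with repeated use of the identities
$$[{\color{blue}j}+2]_{{\color{cyan}t}^{\color{red}m}}=[{\color{blue}j}+1]_{{\color{cyan}t}^{\color{red}m}}+{\color{cyan}t}^{{\color{red}m}({\color{blue}j}+1)},\qquad [{\color{blue}j}+1]_{{\color{cyan}t}^{\color{red}m}}-{\color{cyan}t}^{\color{red}m}[{\color{blue}j}]_{{\color{cyan}t}^{\color{red}m}}=1$$
and the representation of ${\color{magenta}\beta}$ as the Lucas polynomial ${\color{cyan}L}_{\color{red}m}(1-{\color{magenta}q}-{\color{cyan}t}{\color{magenta}q},-{\color{cyan}t}{\color{magenta}q}^2)$ from Lemma \ref{lem:beta:Lm}. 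Following the author's stated strategy, I would carry out by hand the structural reduction (matching degrees of ${\color{magenta}q}$, pulling out the ${\color{magenta}\beta}({\color{magenta}q})$ factor that must reappear in ${\color{teal}A}_{2{\color{blue}j}+2}$ and ${\color{magenta}B}_{2{\color{blue}j}+2}$, clearing the $[{\color{blue}j}+1]_{{\color{cyan}t}^{\color{red}m}}$ denominators against the numerator), and delegate the final bulk polynomial identity in ${\color{magenta}q}$ and ${\color{cyan}t}$ to the symbolic computation programs referenced in the introduction.
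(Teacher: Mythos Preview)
Your proposal is correct and follows essentially the same approach as the paper: read off $({\color{olive}k}_{\color{purple}n},{\color{purple}a}_{\color{purple}n})$ from the leading monomial of ${\color{teal}A}_{\color{purple}n}$, verify ${\color{cyan}D}_{\color{purple}n}$ by the short truncation arguments (using that ${\color{teal}A}_{2{\color{blue}j}}$ is a monomial and ${\color{cyan}C}_{\color{purple}n}=O({\color{magenta}q}^2)$), and relegate the heavy identities \eqref{eqAA}--\eqref{eqCC} to symbolic computation. The paper's own proof is in fact terser than yours---it does not attempt any of the hand reductions you sketch for ${\color{cyan}C}_{{\color{purple}n}+1}$, ${\color{magenta}B}_{{\color{purple}n}+1}$, or the structural part of ${\color{teal}A}_{{\color{purple}n}+1}$, and simply states that all three of \eqref{eqAA}--\eqref{eqCC} are checked by computer; your extra detail (e.g.\ the explicit derivative match confirming ${\color{cyan}D}_{2{\color{blue}j}+1}=1$) is welcome but not a different strategy.
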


\begin{proof}
The values of ${\color{olive}k}_{\color{blue}j}$ and ${\color{purple}a}_{\color{blue}j}$ are straightforward to determine. For ${\color{cyan}D}_{\color{blue}j}$, we distinguish two cases.

In the even case, we have ${\color{olive}k}_{2{\color{blue}j}} = {\color{red}m} - 2$. Since ${\color{cyan}C}_{\color{purple}n} = {\color{magenta}O}({\color{magenta}q}^2)$, it follows that
\begin{equation*}
	\frac{{\color{purple}a}_{2{\color{blue}j}} {\color{magenta}q}^{\color{olive}k} {\color{magenta}B}_{2{\color{blue}j}}}{{\color{teal}A}_{2{\color{blue}j}}} = {\color{cyan}D}_{2{\color{blue}j}}({\color{magenta}q}) + {\color{magenta}O}({\color{magenta}q}^{{\color{red}m}}),
\end{equation*}
which implies
\begin{equation*}
	{\color{magenta}B}_{2{\color{blue}j}} = {\color{cyan}D}_{2{\color{blue}j}}({\color{magenta}q}) + {\color{magenta}O}({\color{magenta}q}^{{\color{red}m}}).
\end{equation*}
Thus, ${\color{cyan}D}_{2{\color{blue}j}}$ is simply the polynomial obtained from ${\color{magenta}B}_{2{\color{blue}j}}$ by truncating its ${\color{magenta}q}$-expansion at order ${\color{magenta}q}^{{\color{red}m}-1}$.

In the odd case, we have ${\color{olive}k}_{2{\color{blue}j}+1} = 0$, and
\begin{equation*}
	\frac{{\color{purple}a}_{2{\color{blue}j}+1} {\color{magenta}B}_{2{\color{blue}j}+1}}{{\color{teal}A}_{2{\color{blue}j}+1}} = {\color{cyan}D}_{2{\color{blue}j}+1} + {\color{magenta}O}({\color{magenta}q}^{2}).
\end{equation*}
Hence, ${\color{cyan}D}_{2{\color{blue}j}+1} = 1$.

The remaining three relations, \eqref{eqAA}, \eqref{eqBB}, and \eqref{eqCC}, are verified using a computer algebra system.
\end{proof}
% >>>

\section{Proof of Lemma \ref{lem:HFrac:Q3}}\label{sec:Q3} % <<<
For ${\color{red}m}\geq 3$, let
\begin{equation*}
{\color{blue}F}({\color{magenta}q})=\frac{({\color{blue}\gamma}({\color{cyan}t},{\color{magenta}q})-1)^{\color{red}m}}{{\color{magenta}q}^3}.
\end{equation*}
By Corollary \ref{cor:F}, we have
\begin{equation*}
0= - {\color{magenta}q}^{{\color{red}m}-3} +     {\color{magenta}\beta}({\color{red}m},{\color{cyan}t},{\color{magenta}q})  {\color{blue}F}  - {\color{cyan}t}^{\color{red}m}  {\color{magenta}q}^{{\color{red}m}+3}  {\color{blue}F}^{2}
\end{equation*}
So we run Algorithm \hbox{\tt NextABC} starting from the initial values
$${\color{teal}A}_0=-{\color{magenta}q}^{{\color{red}m}-3},\quad {\color{magenta}B}_0={\color{magenta}\beta}({\color{red}m};{\color{cyan}t},{\color{magenta}q}),\quad {\color{cyan}C}_0=-{\color{cyan}t}^{\color{red}m}{\color{magenta}q}^{{\color{red}m}+3}.$$

\begin{Lemma}\label{lem:ABCq3}
If we input ${\color{teal}A}_{{\color{purple}n}}, {\color{magenta}B}_{{\color{purple}n}}, {\color{cyan}C}_{{\color{purple}n}}$ into Algorithm \hbox{\tt NextABC}, then the algorithm produces
$$ {\color{teal}A}_{{\color{purple}n}+1}, {\color{magenta}B}_{{\color{purple}n}+1}, {\color{cyan}C}_{{\color{purple}n}+1}, {\color{olive}k}_{{\color{purple}n}}, {\color{purple}a}_{{\color{purple}n}}, {\color{cyan}D}_{{\color{purple}n}}, $$
where, for ${\color{purple}n} \geq 0$, the sequences ${\color{teal}A}_{{\color{purple}n}}, {\color{magenta}B}_{{\color{purple}n}}, {\color{cyan}C}_{{\color{purple}n}}, {\color{olive}k}_{{\color{purple}n}}, {\color{purple}a}_{{\color{purple}n}}, {\color{cyan}D}_{{\color{purple}n}}$ are defined as follows:
\begin{align*}
	{\color{teal}A}_{0}&=-{\color{magenta}q}^{{\color{red}m}-3}, \qquad
	{\color{teal}A}_{3{\color{blue}j}}=(-{\color{cyan}t})^{\color{red}m} {\color{magenta}q}^{{\color{red}m}-3} { {\color{teal}R}({\color{blue}j}-1) {\color{olive}J}_1^{-2} },\\
	{{\color{teal}A}_{3{\color{blue}j}+1}} 	&={\color{teal}\alpha}({\color{magenta}q})\left( {\color{red}u}({\color{blue}j})
	 + {(-1)^{\color{red}m} {\color{olive}J}_2 }{{\color{olive}J}_1^{-1}} {\color{magenta}q}
	\right)\\
& \quad - \left(
	(-1)^{{\color{red}m}+1}{\color{red}u}({\color{blue}j}) + {\color{blue}w}_3({\color{blue}j}) {\color{magenta}q} + {\color{cyan}t}^{{\color{red}m}{\color{blue}j}} {\color{magenta}q}^2
	\right) {{\color{cyan}t}^{\color{red}m} {\color{magenta}q}^{{\color{red}m}-1}}{ {\color{olive}J}_1^{-2}}{\color{teal}R}({\color{blue}j}-1),\\
	{\color{teal}A}_{3{\color{blue}j}+2} &=\frac{-{\color{red}v}({\color{blue}j})^2  {\color{cyan}C}_{3{\color{blue}j}+3}}{ {\color{magenta}q}^2},\\
	{\color{magenta}B}_{3{\color{blue}j}}&={\color{teal}\alpha}({\color{magenta}q}) + {\color{blue}w}_2({\color{blue}j}), \qquad
	{{\color{magenta}B}_{3{\color{blue}j}+1}}   ={\color{teal}\alpha}({\color{magenta}q})  -{\color{blue}w}_2({\color{blue}j}),\\
	{{\color{magenta}B}_{3{\color{blue}j}+2}} &=2  (1+{\color{magenta}q} {\color{red}v}({\color{blue}j}))  {\color{blue}w}_1({\color{blue}j}) -{\color{teal}\alpha}({\color{magenta}q})  + {\color{blue}w}_2({\color{blue}j}),\\
	{\color{cyan}C}_{0}&=-{\color{cyan}t}^{\color{red}m} {\color{magenta}q}^{{\color{red}m}+3},\\
	{\color{cyan}C}_{3{\color{blue}j}}&=-  {\color{teal}\alpha}({\color{magenta}q}) {\color{magenta}q}^2 (1+ {\color{red}v}({\color{blue}j}-1) {\color{magenta}q}) - {\color{red}u}({\color{blue}j}) {\color{magenta}q}^{{\color{red}m}+1}\\
& \quad + {\color{blue}w}_3({\color{blue}j}) (-{\color{magenta}q})^{{\color{red}m}+2} - {{\color{cyan}t}^{{\color{red}m}{\color{blue}j}}}{{\color{teal}R}({\color{blue}j}-1)^{-1}}  (-{\color{magenta}q})^{{\color{red}m}+3},\\
	{{\color{cyan}C}_{3{\color{blue}j}+1}}  &=-{\color{magenta}q}^{{\color{red}m}-1}, \qquad 
	{{\color{cyan}C}_{3{\color{blue}j}+2}} =-{\color{magenta}q}^{2}{\color{blue}w}_1({\color{blue}j}),\\
	{\color{olive}k}_{3{\color{blue}j}}&={\color{red}m}-3, \qquad {\color{olive}k}_{3{\color{blue}j}+1}= {\color{olive}k}_{3{\color{blue}j}+2}=0,\\
	{\color{purple}a}_{0}&=-1, \quad 
	{\color{purple}a}_{3{\color{blue}j}}=(-{\color{cyan}t})^{\color{red}m}   { {\color{teal}R}({\color{blue}j}-1) }{ {\color{olive}J}_1^{-2}  },\\
	{\color{purple}a}_{3{\color{blue}j}+1}&={\color{red}u}({\color{blue}j}), \qquad
	{\color{purple}a}_{3{\color{blue}j}+2} ={\color{red}v}({\color{blue}j})^2,\\
	{\color{cyan}D}_{3{\color{blue}j}}&={\color{teal}\alpha}({\color{magenta}q}), \qquad
	{\color{cyan}D}_{3{\color{blue}j}+1}=1+{\color{magenta}q} {\color{red}v}({\color{blue}j}), \qquad
	{\color{cyan}D}_{3{\color{blue}j}+2}=1- {\color{magenta}q}{\color{red}v}({\color{blue}j}) ,
\end{align*}
where, for brevity, we define
\begin{align*}
{\color{red}u}({\color{blue}j})&=\frac{(-1)^{{\color{red}m}+1}  {\color{teal}R}({\color{blue}j}) }{ {\color{olive}J}_1^2},\qquad
{\color{red}v}({\color{blue}j})=\frac{{\color{olive}J}_1 {\color{olive}J}_2}{{\color{teal}R}({\color{blue}j}) }, \qquad
	{\color{blue}w}_1({\color{blue}j})=\frac{{\color{teal}A}_{3{\color{blue}j}+1}}{{\color{purple}a}_{3{\color{blue}j}+1}},\\
{\color{blue}w}_2({\color{blue}j})&=(-{\color{magenta}q})^{{\color{red}m}-1} { {\color{olive}S}({\color{blue}j})   }{ {\color{olive}J}_1^{-2} } + (-{\color{magenta}q})^{\color{red}m} { (1+{\color{cyan}t}^{({\color{blue}j}+1){\color{red}m}}) }{{\color{olive}J}_1^{-1}},\\
{\color{blue}w}_3({\color{blue}j})&={\color{red}m}[{\color{red}m}]_{\color{cyan}t} {\color{red}v}({\color{blue}j}-1) - {(1+{\color{cyan}t}^{{\color{red}m}({\color{blue}j}+1)})}{{\color{olive}J}_1^{-1}}.
\end{align*}
\end{Lemma}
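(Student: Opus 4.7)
The plan is to verify the lemma by induction on ${\color{purple}n}$, showing that the explicitly given six-tuple $({\color{teal}A}_{{\color{purple}n}+1}, {\color{magenta}B}_{{\color{purple}n}+1}, {\color{cyan}C}_{{\color{purple}n}+1}; {\color{olive}k}_{\color{purple}n}, {\color{purple}a}_{\color{purple}n}, {\color{cyan}D}_{\color{purple}n})$ is indeed the output of Algorithm \hbox{\tt NextABC} applied to $({\color{teal}A}_{\color{purple}n}, {\color{magenta}B}_{\color{purple}n}, {\color{cyan}C}_{\color{purple}n})$. The base case ${\color{purple}n}=0$ matches the initial data ${\color{teal}A}_0=-{\color{magenta}q}^{{\color{red}m}-3},\ {\color{magenta}B}_0={\color{magenta}\beta}({\color{red}m};{\color{cyan}t},{\color{magenta}q}),\ {\color{cyan}C}_0=-{\color{cyan}t}^{\color{red}m}{\color{magenta}q}^{{\color{red}m}+3}$ obtained from Corollary \ref{cor:F}. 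For the inductive step, I would split into three cases according to ${\color{purple}n}\bmod 3$, since the conjectured formulas have period three.

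Within each case, I would proceed in the order prescribed by the algorithm. For Step 1, I would read off the ${\color{magenta}q}$-adic valuation of ${\color{teal}A}_{\color{purple}n}$ and its leading coefficient; the only nontrivial check occurs for ${\color{teal}A}_{3{\color{blue}j}+1}$ and ${\color{teal}A}_{3{\color{blue}j}+2}$, where the ${\color{magenta}q}^0$-term must cancel properly, using ${\color{teal}\alpha}(0)=1$ together with the closed forms for ${\color{red}u}({\color{blue}j})$ and ${\color{red}v}({\color{blue}j})$. For Step 2, I would use the observation recorded in the remark after the algorithm: whenever ${\color{cyan}C}_{\color{purple}n}={\color{magenta}O}({\color{magenta}q}^2)$, the relation \eqref{defD2} reduces to
\begin{equation*}
{\color{purple}a}_{\color{purple}n} {\color{magenta}q}^{{\color{olive}k}_{\color{purple}n}} {\color{magenta}B}_{\color{purple}n}/{\color{teal}A}_{\color{purple}n} = {\color{cyan}D}_{\color{purple}n}({\color{magenta}q}) + {\color{magenta}O}({\color{magenta}q}^{{\color{olive}k}_{\color{purple}n}+2}),
\end{equation*}
so that ${\color{cyan}D}_{\color{purple}n}$ is simply the low-order truncation of the indicated rational function; for ${\color{teal}A}_{\color{purple}n}={\color{purple}a}_{\color{purple}n}{\color{magenta}q}^{{\color{olive}k}_{\color{purple}n}}$ this truncation is a truncation of ${\color{magenta}B}_{\color{purple}n}$ itself, which matches the closed forms ${\color{teal}\alpha}({\color{magenta}q})$, $1+{\color{magenta}q}{\color{red}v}({\color{blue}j})$, $1-{\color{magenta}q}{\color{red}v}({\color{blue}j})$ after invoking ${\color{magenta}\beta}-{\color{teal}\alpha}=\frac{{\color{red}m}(1-{\color{cyan}t}^{\color{red}m})}{1-{\color{cyan}t}}(-{\color{magenta}q})^{{\color{red}m}-1}+(1+{\color{cyan}t}^{\color{red}m})(-{\color{magenta}q})^{\color{red}m}$.

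The heart of the proof is Step 3: establishing the three polynomial identities \eqref{eqAA}, \eqref{eqBB}, \eqref{eqCC} for the conjectured data in each of the six resulting transitions $3{\color{blue}j}\to 3{\color{blue}j}+1$, $3{\color{blue}j}+1\to 3{\color{blue}j}+2$, $3{\color{blue}j}+2\to 3{\color{blue}j}+3$. Here the relation ${\color{cyan}C}^*=-{\color{teal}A}{\color{magenta}q}^2/{\color{purple}a}$ from \eqref{eqCC} is immediate from the closed forms for ${\color{teal}A}_{\color{purple}n}$ and ${\color{purple}a}_{\color{purple}n}$, while \eqref{eqBB} follows by rewriting ${\color{magenta}B}^*=2{\color{cyan}D}_{\color{purple}n}/({\color{purple}a}_{\color{purple}n}{\color{magenta}q}^{{\color{olive}k}_{\color{purple}n}}/{\color{teal}A}_{\color{purple}n})^{-1}-{\color{magenta}B}_{\color{purple}n}$ and comparing with the target ${\color{magenta}B}_{{\color{purple}n}+1}$. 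My strategy is to clear denominators (the natural denominators being $[{\color{blue}j}+1]^2_{{\color{cyan}t}^{\color{red}m}}$ and ${\color{teal}R}({\color{blue}j})^2$) and then rewrite all occurrences of ${\color{olive}S}({\color{blue}j})$ and ${\color{red}m}(1-{\color{cyan}t}^{\color{red}m})/(1-{\color{cyan}t})$ in terms of ${\color{teal}R}({\color{blue}j})$ and ${\color{teal}R}({\color{blue}j}-1)$ via the linear relations listed at the end of Section \ref{sec:notations}; this reduces every identity to a polynomial identity in ${\color{magenta}q}$, ${\color{cyan}t}$, and ${\color{teal}R}({\color{blue}j}), {\color{teal}R}({\color{blue}j}-1)$.

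The main obstacle is the sheer bulk of these identities, particularly for \eqref{eqAA}: the polynomial ${\color{teal}A}^*$ involves squares and products of expressions in ${\color{magenta}\beta}$ and ${\color{teal}\alpha}$ of degree up to ${\color{red}m}$ in ${\color{magenta}q}$, with rational-function coefficients in ${\color{cyan}t}$ whose denominators ${\color{teal}R}({\color{blue}j})$, ${\color{teal}R}({\color{blue}j}-1)$ appear in an essential way (compare the explicit expression for ${\color{cyan}C}_{3{\color{blue}j}}$ reproduced in the introduction). After the algebraic reductions above, each identity becomes a polynomial identity in ${\color{magenta}q}, {\color{cyan}t}$ of controllable degree that can be verified coefficient by coefficient. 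Following the paper's stated methodology, the final formal verification of these large identities is delegated to a symbolic computation program, which carries out the comparison rigorously rather than on finitely many instances.
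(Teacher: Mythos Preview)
Your plan coincides with the paper's own proof: treat the three residues $n\equiv 0,1,2\pmod 3$ separately, read off $k_n,a_n$ from the leading term of $A_n$, determine $D_n$ from the truncation in \eqref{defD2} (using that $C_n=O(q^2)$ in each case), and delegate the polynomial identities \eqref{eqAA}--\eqref{eqCC} to a symbolic computation; the paper does exactly this, and your extra remark about eliminating $S(j)$ and $m[m]_t$ in favor of $R(j),R(j-1)$ is a sensible implementation detail. The one place where your write-up is too quick is Step~2 for $n\equiv 1,2\pmod 3$: there $A_n$ is \emph{not} a monomial $a_nq^{k_n}$, so $D_n$ is not a truncation of $B_n$ alone---one must expand $A_{3j+1}/a_{3j+1}=\alpha(q)(1-v(j)q)+O(q^{m-1})$ and then invert to get $D_{3j+1}=1+qv(j)$, and similarly use $A_{3j+2}/a_{3j+2}=-C_{3j+3}/q^2=\alpha(q)(1+v(j)q)+O(q^{m-1})$ to obtain $D_{3j+2}=1-qv(j)$, exactly as the paper does.
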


\begin{proof}
Turn $(3{\color{blue}j})$.  
	From ${\color{teal}A}_{0}$ we derive the formulas for ${\color{olive}k}_{0}$ and ${\color{purple}a}_{0}$.
	Using \eqref{defD2}, we obtain $ {\color{magenta}B}_{0} = {\color{cyan}D}_{0} + {\color{magenta}O}({\color{magenta}q}^{{\color{red}m}-1})$,
	which validates the expression of ${\color{cyan}D}_{0}$.
	For ${\color{blue}j}\geq 1$, ${\color{teal}A}_{3{\color{blue}j}}$ yields the expressions for ${\color{olive}k}_{3{\color{blue}j}}$ and ${\color{purple}a}_{3{\color{blue}j}}$.
	Again, by \eqref{defD2}, we have $ {\color{magenta}B}_{3{\color{blue}j}} = {\color{cyan}D}_{3{\color{blue}j}} + {\color{magenta}O}({\color{magenta}q}^{{\color{red}m}-1})$,
	which confirms the expression of ${\color{cyan}D}_{3{\color{blue}j}}$.

	Turn $(3{\color{blue}j}+1)$.  From the explicit forms of ${\color{teal}A}_{3{\color{blue}j}+1}$ and ${\color{magenta}B}_{3{\color{blue}j}+1}$, we obtain
	${\color{olive}k}_{3{\color{blue}j}+1}=0$ and ${\color{purple}a}_{3{\color{blue}j}+1}={\color{red}u}({\color{blue}j})$.
Define
\begin{equation*}
	{\color{blue}w}_1({\color{blue}j})=\frac{{\color{teal}A}_{3{\color{blue}j}+1}}{{\color{purple}a}_{3{\color{blue}j}+1}} = {\color{teal}\alpha}({\color{magenta}q})\bigl(1 - {\color{red}v}({\color{blue}j}){\color{magenta}q}\bigr) + {\color{magenta}O}({\color{magenta}q}^{{\color{red}m}-1}).
\end{equation*}
Consequently,
\begin{equation*}
	\frac{{\color{magenta}B}_{3{\color{blue}j}+1}}{{\color{blue}w}_1({\color{blue}j})}
	= \frac{1}{1 - {\color{red}v}({\color{blue}j}){\color{magenta}q}} + {\color{magenta}O}({\color{magenta}q}^{{\color{red}m}-1}).
\end{equation*}
	Since ${\color{red}m}\geq 3$, we have ${\color{cyan}C}_{3{\color{blue}j}+1}={\color{magenta}O}({\color{magenta}q}^2)$, which implies ${\color{cyan}D}_{3{\color{blue}j}+1}=1 + {\color{magenta}q} {\color{red}v}({\color{blue}j})$.

Turn $(3{\color{blue}j}+2)$.  
	We obtain ${\color{olive}k}_{3{\color{blue}j}+2}=0$ and ${\color{purple}a}_{3{\color{blue}j}+2} = {\color{red}v}({\color{blue}j})^2$.
	Because ${\color{cyan}C}_{3{\color{blue}j}+2} = {\color{magenta}O}({\color{magenta}q}^2)$, we get
\begin{align*}
	{\color{magenta}B}_{3{\color{blue}j}+2}({\color{magenta}q}) 
	&= 2{\color{cyan}D}_{3{\color{blue}j}+1}{\color{blue}w}_1({\color{blue}j}) - {\color{magenta}B}_{3{\color{blue}j}+1}\\
	&= 2(1+{\color{red}v}({\color{blue}j}){\color{magenta}q}){\color{teal}\alpha}({\color{magenta}q})(1-{\color{red}v}({\color{blue}j}){\color{magenta}q}) - {\color{teal}\alpha}({\color{magenta}q}) + {\color{magenta}O}({\color{magenta}q}^2)\\
	&= 1 - {\color{red}m}({\color{cyan}t}+1){\color{magenta}q} + {\color{magenta}O}({\color{magenta}q}^2).
\end{align*}
Thus,
\begin{equation*}
	\frac{{\color{purple}a}_{3{\color{blue}j}+2}{\color{magenta}B}_{3{\color{blue}j}+2}}{{\color{teal}A}_{3{\color{blue}j}+2}}  = 1 - {\color{red}v}({\color{blue}j}){\color{magenta}q} + {\color{magenta}O}({\color{magenta}q}^{2}).
\end{equation*}
	Hence ${\color{cyan}D}_{3{\color{blue}j}+2} = 1 - {\color{magenta}q} {\color{red}v}({\color{blue}j})$.
The remaining three relations \eqref{eqAA}, \eqref{eqBB}, and \eqref{eqCC} are verified using a computer algebra system.
\end{proof}

% >>>

\section{Proof of Lemmas \ref{lem:HFrac:Q0} and \ref{lem:HFrac:Q1}} % <<<
In this section, we obtain Lemma \ref{lem:HFrac:Q0} as a consequence of Lemma \ref{lem:HFrac:Q2}, and we establish Lemma \ref{lem:HFrac:Q1} by a direct calculation.

\begin{proof}[Proof of Lemma \ref{lem:HFrac:Q0}]
For ${\color{red}m}\geq 0$, define ${\color{blue}F}({\color{magenta}q})=({\color{blue}\gamma}({\color{cyan}t},{\color{magenta}q})-1)^{\color{red}m}$.
By Corollary \ref{cor:F}, we have
\begin{equation*}
0= - {\color{magenta}q}^{{\color{red}m}} + {\color{magenta}\beta}({\color{red}m};{\color{cyan}t},{\color{magenta}q})\,{\color{blue}F} - {\color{cyan}t}^{\color{red}m} {\color{magenta}q}^{{\color{red}m}} {\color{blue}F}^{2}.
\end{equation*}
Hence
\begin{equation*}
{\color{blue}F}
= \frac{{\color{magenta}q}^{\color{red}m}}{{\color{magenta}\beta}({\color{red}m};{\color{cyan}t},{\color{magenta}q}) - {\color{cyan}t}^{\color{red}m} {\color{magenta}q}^{\color{red}m} ({\color{blue}\gamma}({\color{cyan}t},{\color{magenta}q})-1)^{\color{red}m}}
= \frac{{\color{magenta}q}^{\color{red}m}}{{\color{magenta}\beta}({\color{red}m};{\color{cyan}t},{\color{magenta}q}) - {\color{cyan}t}^{\color{red}m} {\color{magenta}q}^{{\color{red}m}+2} \frac{({\color{blue}\gamma}({\color{cyan}t},{\color{magenta}q})-1)^{\color{red}m}}{{\color{magenta}q}^2}}.
\end{equation*}
Since the ${\color{violet}H}$-fraction of ${({\color{blue}\gamma}({\color{cyan}t},{\color{magenta}q})-1)^{\color{red}m}}/{{\color{magenta}q}^2}$ is determined in Lemma \ref{lem:HFrac:Q2}, this directly yields the ${\color{violet}H}$-fraction expansion of ${\color{blue}F}$.
\end{proof}

\begin{proof}[Proof of Lemma \ref{lem:HFrac:Q1}]
Set
$${\color{blue}F}= \frac{({\color{blue}\gamma}({\color{cyan}t},{\color{magenta}q})-1)^{\color{red}m}}{{\color{magenta}q}}.$$
By Corollary \ref{cor:F}, we obtain
$$
{\color{blue}F}=\frac{{\color{magenta}q}^{{\color{red}m}-1}}{{\color{magenta}\beta}({\color{red}m};{\color{cyan}t},{\color{magenta}q}) - {\color{cyan}t}^{\color{red}m} {\color{magenta}q}^{{\color{red}m}+1} {\color{blue}F}}.
$$
This identity provides the ${\color{violet}H}$-fraction representation of ${\color{blue}F}$.
\end{proof}
% >>>

\section{Fibonacci polynomials and Lucas polynomials} %<<< 
In exploring the central topic of this paper, we obtained several identities involving the Fibonacci and Lucas polynomials. Although these results are not required for the final exposition, we still find it worthwhile to record them here.

Recall that the {\it Fibonacci polynomials} \cite{Cigler2018} are defined for 
${\color{purple}n}\geq 0$ by
\begin{equation}\label{Fibo:sum}
{\color{blue}F}_{\color{purple}n}({\color{olive}x},{\color{teal}s}) = \sum_{{\color{olive}k}=0}^{\lfloor ({\color{purple}n}-1)/2\rfloor} 
\binom{{\color{purple}n}-1-{\color{olive}k}}{{\color{olive}k}}  {\color{teal}s}^{\color{olive}k} {\color{olive}x}^{{\color{purple}n}-1-2{\color{olive}k}}.
\end{equation}
Their ordinary generating function is
\begin{equation}\label{Fibo:gf}
\sum_{{\color{purple}n}\geq 0} {\color{blue}F}_{\color{purple}n}({\color{olive}x},{\color{teal}s}) {\color{purple}Y}^{\color{purple}n} = \frac{{\color{purple}Y}}{1-{\color{olive}x}{\color{purple}Y} -{\color{teal}s}{\color{purple}Y}^2}.
\end{equation}
These polynomials obey the three-term recurrence
\begin{equation}\label{Fibo:rec}
{\color{blue}F}_{\color{purple}n}({\color{olive}x},{\color{teal}s}) = {\color{olive}x} {\color{blue}F}_{{\color{purple}n}-1}({\color{olive}x},{\color{teal}s}) + {\color{teal}s} {\color{blue}F}_{{\color{purple}n}-2}({\color{olive}x},{\color{teal}s}),
\end{equation}
subject to the initial values ${\color{blue}F}_0({\color{olive}x},{\color{teal}s})=0$ and ${\color{blue}F}_1({\color{olive}x},{\color{teal}s})=1$.

\begin{Lemma}
	The Fibonacci polynomials admit the representation
\begin{equation*}
{\color{blue}F}_{\color{purple}n}({\color{olive}x},{\color{teal}s}) =   \frac{1}{2^{{\color{purple}n}-1}} 
	\sum_{{\color{olive}k}=1}^{\lfloor ({\color{purple}n}+1)/2\rfloor} \binom{{\color{purple}n}}{2{\color{olive}k}-1} ({\color{olive}x}^2+4{\color{teal}s})^{{\color{olive}k}-1} {\color{olive}x}^{{\color{purple}n}-2{\color{olive}k}+1}.
\end{equation*}

\end{Lemma}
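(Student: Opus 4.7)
The plan is to use a Binet-type closed form for $F_n(x,s)$ and then expand by the binomial theorem. Let $D=\sqrt{x^2+4s}$ and let $\alpha,\beta$ be the two roots of the characteristic equation $z^2-xz-s=0$ associated with the recurrence \eqref{Fibo:rec}, so that
$$\alpha=\frac{x+D}{2},\qquad \beta=\frac{x-D}{2},\qquad \alpha+\beta=x,\quad \alpha\beta=-s,\quad \alpha-\beta=D.$$

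First I would establish the identity
$$F_n(x,s)=\frac{\alpha^n-\beta^n}{\alpha-\beta}.$$
This follows by checking the two initial values (the right-hand side equals $0$ for $n=0$ and $1$ for $n=1$) and observing that the right-hand side satisfies the same three-term recurrence \eqref{Fibo:rec}, since $\alpha$ and $\beta$ are roots of $z^2=xz+s$. By uniqueness of the solution of the recurrence with given initial data, the identity holds for all $n\geq 0$.

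Next I would expand $\alpha^n-\beta^n$ via the binomial theorem, using $\alpha=(x+D)/2$ and $\beta=(x-D)/2$:
$$\alpha^n-\beta^n=\frac{1}{2^n}\sum_{j=0}^{n}\binom{n}{j}x^{n-j}\bigl(D^j-(-D)^j\bigr)=\frac{1}{2^{n-1}}\sum_{\substack{1\le j\le n\\ j\text{ odd}}}\binom{n}{j}x^{n-j}D^{j}.$$
Writing $j=2k-1$ with $1\le k\le \lfloor(n+1)/2\rfloor$ and dividing by $\alpha-\beta=D$, the surviving power is $D^{2k-2}=(x^2+4s)^{k-1}$, which gives exactly the claimed formula.

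I do not expect any serious obstacle: the only routine point to watch is that $D$ appears only through its even powers after division, so the resulting expression is a genuine polynomial in $x$ and $s$, matching $F_n(x,s)$ as defined by \eqref{Fibo:sum} or \eqref{Fibo:gf}. An alternative, purely algebraic route (bypassing the introduction of $D$) would be to verify the proposed closed form directly by induction on $n$ using the recurrence \eqref{Fibo:rec} together with the Pascal-type identity $\binom{n}{2k-1}=\binom{n-1}{2k-1}+\binom{n-1}{2k-2}$ and a small manipulation of $(x^2+4s)^{k-1}$, but the Binet approach above is shorter and more transparent.
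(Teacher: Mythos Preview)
Your proof is correct and takes a genuinely different route from the paper. The paper computes the ordinary generating function $\sum_{n\ge 0}\mathrm{RHS}\cdot Y^n$: it interchanges the sums over $n$ and $k$, sums the resulting geometric-type series, and simplifies to $\dfrac{Y}{1-xY-sY^2}$, which is the known generating function \eqref{Fibo:gf} of $F_n(x,s)$. You instead invoke the Binet formula $F_n=(\alpha^n-\beta^n)/(\alpha-\beta)$ and expand $(x\pm D)^n$ by the binomial theorem; the odd-index terms survive and, after dividing by $D$, only even powers $D^{2k-2}=(x^2+4s)^{k-1}$ remain. Your argument is shorter and makes the identity transparent as nothing more than the binomial expansion of Binet's expression; the paper's generating-function computation is a bit longer but stays entirely within polynomial algebra and avoids introducing the auxiliary square root $D$. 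The only cosmetic point worth adding is a one-line remark that the Binet identity can be read in the ring $\mathbb{Q}[x,s][D]/(D^2-x^2-4s)$ (or, equivalently, that the final polynomial identity holds on the Zariski-open set $x^2+4s\neq 0$ and hence everywhere), so that no genuine analytic choice of square root is needed.
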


\begin{proof}
Let $\RHS$ denote the right-hand side of the preceding identity. Set ${\color{red}u} = {\color{olive}x}^2 + 4{\color{teal}s}$. Then we obtain
\begin{align*}
	\sum_{{\color{red}m}\geq 0} \RHS \  {\color{purple}Y}^{\color{purple}n} &=\sum_{{\color{purple}n}\geq 0}
  \frac{2}{(2)^{{\color{purple}n}}} 
	\sum_{{\color{olive}k}\geq 1, 2{\color{olive}k}-1\leq {\color{purple}n}} \binom{{\color{purple}n}}{2{\color{olive}k}-1} {\color{red}u}^{{\color{olive}k}-1}{\color{olive}x}^{{\color{purple}n}-2{\color{olive}k}+1}
	{\color{purple}Y}^{\color{purple}n}\\
&=\frac{2{\color{olive}x}}{{\color{red}u}}
\sum_{{\color{olive}k}\geq 1}
	\frac{  {\color{red}u}^{\color{olive}k}}{ {\color{olive}x}^{2{\color{olive}k}} }
	\sum_{{\color{purple}n}\geq 2{\color{olive}k}-1}
	 \binom{{\color{purple}n}}{2{\color{olive}k}-1} ( \frac{ {\color{olive}x} {\color{purple}Y}}{2})^{\color{purple}n}\\
&=\frac{2{\color{olive}x}}{{\color{red}u}}
\sum_{{\color{olive}k}\geq 1}
	\frac{  {\color{red}u}^{\color{olive}k}}{ {\color{olive}x}^{2{\color{olive}k}} }
		\frac{( \frac{ {\color{olive}x} {\color{purple}Y}}{2})^{2{\color{olive}k}-1}}
		{(1- \frac{ {\color{olive}x} {\color{purple}Y}}{2})^{2{\color{olive}k}}}\\
&=\frac{2{\color{olive}x}}{{\color{red}u}}
\frac{{\color{red}u} \frac{{\color{olive}x}{\color{purple}Y}}{2}}
		{{\color{olive}x}^2  (1-{\color{olive}x}{\color{purple}Y}/2)^2}
\sum_{{\color{olive}k}\geq 0}
	\left(	\frac{  {\color{red}u}}{ {\color{olive}x}^{2} }
		\frac{( \frac{ {\color{olive}x} {\color{purple}Y}}{2})^{2}}
		{(1- \frac{ {\color{olive}x} {\color{purple}Y}}{2})^{2}} \right)^{\color{olive}k}\\
&=\frac{  {\color{purple}Y}}
		{ (1-{\color{olive}x}{\color{purple}Y}/2)^2}
\frac{1}{1-
		\frac{  {\color{red}u}}{ {\color{olive}x}^{2} }
		\frac{( \frac{ {\color{olive}x} {\color{purple}Y}}{2})^{2}}
		{(1- \frac{ {\color{olive}x} {\color{purple}Y}}{2})^{2}} }\\
%&=\frac{  4 {\color{purple}Y} }{  (2-{\color{olive}x}{\color{purple}Y})^2  - {\color{red}u} {\color{purple}Y}^2}\\
&=\frac{  {\color{purple}Y} }{   1 - {\color{olive}x}{\color{purple}Y}  - {\color{teal}s} {\color{purple}Y}^2}.\qedhere
\end{align*}

\end{proof}

The Lucas polynomials are defined in Section \ref{sec:notations}. Their generating function is given by
\begin{equation}\label{Lucas:gf}\sum_{{\color{purple}n}\geq 0} {\color{cyan}L}_{\color{purple}n}({\color{olive}x},{\color{teal}s}) {\color{purple}Y}^{\color{purple}n} = \frac{2-{\color{olive}x}{\color{purple}Y}}{1-{\color{olive}x}{\color{purple}Y} -{\color{teal}s}{\color{purple}Y}^2}.
\end{equation}

\begin{Lemma}
	The Lucas polynomials admit the explicit representation
	\begin{equation*}
{\color{cyan}L}_{\color{purple}n}({\color{olive}x},{\color{teal}s}) = \frac{1}{2^{{\color{purple}n}-1}} 
	\sum_{{\color{olive}k}=0}^{\lfloor {\color{purple}n}/2\rfloor} \binom{{\color{purple}n}}{2{\color{olive}k}} ({\color{olive}x}^2+4{\color{teal}s})^{\color{olive}k} {\color{olive}x}^{{\color{purple}n}-2{\color{olive}k}}.
\end{equation*}
\end{Lemma}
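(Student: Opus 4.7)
The plan is to mirror the generating-function argument already used for the Fibonacci analogue: sum the claimed closed form against $Y^n$, interchange the order of summation, apply the standard identity $\sum_{m\ge 0}\binom{m+2k}{m}z^m = (1-z)^{-2k-1}$, and reduce the result to the known Lucas generating function \eqref{Lucas:gf}. By uniqueness of power-series coefficients, this will establish the identity.

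Concretely, writing $u=x^2+4s$ for brevity and denoting by $\RHS$ the right-hand side of the claimed formula, I substitute $m=n-2k$ and swap the sums to get
$$\sum_{n\ge 0}\RHS\cdot Y^n = 2\sum_{k\ge 0} u^k \Bigl(\tfrac{Y}{2}\Bigr)^{\!2k}\sum_{m\ge 0}\binom{m+2k}{m}\Bigl(\tfrac{xY}{2}\Bigr)^{\!m} = \frac{2}{1-xY/2}\sum_{k\ge 0}\left(\frac{uY^2/4}{(1-xY/2)^2}\right)^{\!k}.$$
Summing the geometric series in $k$ produces $2(1-xY/2)/\bigl((1-xY/2)^2 - uY^2/4\bigr)$. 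The only algebraic point requiring verification is the denominator simplification
$$\bigl(1-\tfrac{xY}{2}\bigr)^{\!2} - (x^2+4s)\tfrac{Y^2}{4} = 1 - xY - sY^2,$$
after which the whole expression becomes $(2-xY)/(1-xY-sY^2)$, matching \eqref{Lucas:gf}.

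The argument is essentially mechanical, parallels the Fibonacci lemma step for step, and has no real obstacle beyond the two short formal manipulations above (sum swap and denominator cancellation). A comparably quick alternative that avoids generating functions altogether would be to verify the three-term recurrence \eqref{Lucas:rec} directly on the proposed closed form, using the Pascal identity $\binom{n}{2k}=\binom{n-1}{2k}+\binom{n-1}{2k-1}$ to relate consecutive terms, together with the initial values $L_0(x,s)=2$ and $L_1(x,s)=x$; I would choose whichever route reads more cleanly when typeset.
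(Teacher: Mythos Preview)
Your proposal is correct and essentially identical to the paper's own proof: both set $u=x^2+4s$, sum the claimed expression against $Y^n$, swap the order of summation, apply the binomial-series identity $\sum_{m\ge 0}\binom{m+2k}{m}z^m=(1-z)^{-2k-1}$, sum the resulting geometric series in $k$, and simplify the denominator to recover \eqref{Lucas:gf}. The only cosmetic difference is that the paper keeps the outer index $n$ and factors out $x^{-2k}$, whereas you substitute $m=n-2k$ explicitly; the two computations are line-by-line equivalent.
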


\begin{proof}
Let ${\color{red}u} = {\color{olive}x}^2 + 4{\color{teal}s}$, and denote the right-hand side of the identity above by $\RHS$. Then,
\begin{align*}
	\sum_{{\color{purple}n}\geq 0} \RHS {\color{purple}Y}^{\color{purple}n} &=\sum_{{\color{purple}n}\geq 0}
  \frac{2}{(2)^{{\color{purple}n}}} 
	\sum_{{\color{olive}k}\geq 0, 2{\color{olive}k}\leq {\color{purple}n}} \binom{{\color{purple}n}}{2{\color{olive}k}} {\color{red}u}^{\color{olive}k} {\color{olive}x}^{{\color{purple}n}-2{\color{olive}k}}
	{\color{purple}Y}^{\color{purple}n}\\
&=\sum_{{\color{olive}k}\geq 0}
	\frac{ 2 {\color{red}u}^{\color{olive}k}}{ {\color{olive}x}^{2{\color{olive}k}} }
	\sum_{{\color{purple}n}\geq 2{\color{olive}k}}
	 \binom{{\color{purple}n}}{2{\color{olive}k}} ( \frac{ {\color{olive}x} {\color{purple}Y}}{2})^{\color{purple}n}\\
&=\sum_{{\color{olive}k}\geq 0}
	\frac{ 2 {\color{red}u}^{\color{olive}k}}{ {\color{olive}x}^{2{\color{olive}k}} }
		\frac{( \frac{ {\color{olive}x} {\color{purple}Y}}{2})^{2{\color{olive}k}}}
		{(1- \frac{ {\color{olive}x} {\color{purple}Y}}{2})^{2{\color{olive}k}+1}}\\
&=\frac{2}
		{1- \frac{ {\color{olive}x} {\color{purple}Y}}{2}}
\sum_{{\color{olive}k}\geq 0}
	\left(	\frac{  {\color{red}u}}{ {\color{olive}x}^{2} }
		\frac{( \frac{ {\color{olive}x} {\color{purple}Y}}{2})^{2}}
		{(1- \frac{ {\color{olive}x} {\color{purple}Y}}{2})^{2}} \right)^{\color{olive}k}\\
&=\frac{2} {1- \frac{ {\color{olive}x} {\color{purple}Y}}{2}}
	\frac{1} {1- \frac{ {\color{red}u}{\color{purple}Y}^2}{(2-{\color{olive}x}{\color{purple}Y})^2}}\\
&=\frac{2-{\color{olive}x}{\color{purple}Y}}  {1- {\color{olive}x} {\color{purple}Y} -{\color{teal}s}{\color{purple}Y}^2}.\qedhere
\end{align*}
\end{proof}

We had examined the odd case, but without success. Instead, we choose to present the quadratic equation here, in case the reader wishes to continue working on the problem. Since the functions ${\color{blue}\gamma}^{(2{\color{olive}\tau}+1)}_{\color{purple}n}({\color{cyan}t})$ are defined by
$$\sum {\color{blue}\gamma}^{(2{\color{olive}\tau}+1)}_{\color{purple}n}({\color{cyan}t})\, {\color{magenta}q}^{\color{purple}n} \;=\; {\color{blue}\gamma}({\color{cyan}t},{\color{magenta}q})\,{\color{red}G}({\color{cyan}t},{\color{magenta}q})^{\color{olive}\tau},$$
it appears natural to examine the series
$$
{\color{blue}F} = \frac{{\color{blue}\gamma}({\color{cyan}t},{\color{magenta}q})\,({\color{blue}\gamma}({\color{cyan}t},{\color{magenta}q})-1)^{\color{red}m}}{{\color{magenta}q}^{{\color{red}m}_0}}.
$$
This led us to the following lemma, whose proof we omit here.

\begin{Lemma}
The power series ${\color{blue}F}$ defined above satisfies the quadratic relation
$$
0 = -2{\color{magenta}q}^{{\color{red}m}-{\color{red}m}_0} + {(-1)^{\color{red}m} {\color{blue}T}}\,{\color{blue}F} - 2{\color{cyan}t}^{{\color{red}m}+1} {\color{magenta}q}^{{\color{red}m}+{\color{red}m}_0+1} {\color{blue}F}^2,
$$
where
\begin{align*}
  {\color{blue}T} &= (1-{\color{magenta}q}+{\color{cyan}t}{\color{magenta}q})\, {\color{cyan}L}_{\color{purple}n}(-1+{\color{magenta}q}+{\color{cyan}t}{\color{magenta}q} , -{\color{cyan}t}{\color{magenta}q}^2) \\
    &\qquad - \Bigl((1+{\color{magenta}q}-{\color{cyan}t}{\color{magenta}q})^2 - 4{\color{magenta}q}\Bigr)\, {\color{blue}F}_{\color{purple}n}(-1+{\color{magenta}q}+{\color{cyan}t}{\color{magenta}q} , -{\color{cyan}t}{\color{magenta}q}^2).
\end{align*}
\end{Lemma}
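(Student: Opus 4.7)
The plan is to derive the quadratic equation by a Galois-conjugate computation directly from \eqref{eq:Narayana}. Let ${\color{blue}\gamma}'$ denote the conjugate root of the quadratic satisfied by ${\color{blue}\gamma}({\color{cyan}t},{\color{magenta}q})$, so Vieta gives
$$
{\color{blue}\gamma}+{\color{blue}\gamma}' = \frac{1-{\color{magenta}q}+{\color{cyan}t}{\color{magenta}q}}{{\color{cyan}t}{\color{magenta}q}}, \qquad {\color{blue}\gamma}{\color{blue}\gamma}' = \frac{1}{{\color{cyan}t}{\color{magenta}q}}.
$$
Set ${\color{blue}F}' := {\color{blue}\gamma}'({\color{blue}\gamma}'-1)^{\color{red}m}/{\color{magenta}q}^{{\color{red}m}_0}$. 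Then ${\color{blue}F},{\color{blue}F}'$ are the two roots of a quadratic over $\QQ({\color{cyan}t},{\color{magenta}q})$, and the claimed equation is equivalent, by Vieta applied to the target quadratic, to the two identities
$$
{\color{blue}F}{\color{blue}F}' = \frac{1}{{\color{cyan}t}^{{\color{red}m}+1}{\color{magenta}q}^{2{\color{red}m}_0+1}}, \qquad {\color{blue}F}+{\color{blue}F}' = \frac{(-1)^{\color{red}m}{\color{blue}T}}{2{\color{cyan}t}^{{\color{red}m}+1}{\color{magenta}q}^{{\color{red}m}+{\color{red}m}_0+1}}.
$$

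Writing $u={\color{blue}\gamma}-1$ and $u'={\color{blue}\gamma}'-1$, I first compute $uu' = {\color{blue}\gamma}{\color{blue}\gamma}'-({\color{blue}\gamma}+{\color{blue}\gamma}')+1 = 1/{\color{cyan}t}$ and $u+u' = (1-{\color{magenta}q}-{\color{cyan}t}{\color{magenta}q})/({\color{cyan}t}{\color{magenta}q})$. The product identity is then immediate, since ${\color{blue}F}{\color{blue}F}' = ({\color{blue}\gamma}{\color{blue}\gamma}')(uu')^{\color{red}m}/{\color{magenta}q}^{2{\color{red}m}_0}$ simplifies at once to the required value. For the sum I symmetrise
$$
{\color{blue}\gamma} u^{\color{red}m} + {\color{blue}\gamma}' (u')^{\color{red}m} = \tfrac{1}{2}({\color{blue}\gamma}+{\color{blue}\gamma}')\bigl(u^{\color{red}m}+(u')^{\color{red}m}\bigr) + \tfrac{1}{2}({\color{blue}\gamma}-{\color{blue}\gamma}')\bigl(u^{\color{red}m}-(u')^{\color{red}m}\bigr),
$$
and invoke the standard Binet-style identities
$$
u^{\color{red}m}+(u')^{\color{red}m} = {\color{cyan}L}_{\color{red}m}(u+u',-uu'), \qquad u^{\color{red}m}-(u')^{\color{red}m} = (u-u')\,{\color{blue}F}_{\color{red}m}(u+u',-uu').
$$
Because ${\color{blue}\gamma}-{\color{blue}\gamma}'=u-u'$, the second summand collapses to $\tfrac{1}{2}\bigl(({\color{blue}\gamma}+{\color{blue}\gamma}')^2-4{\color{blue}\gamma}{\color{blue}\gamma}'\bigr){\color{blue}F}_{\color{red}m}(u+u',-uu')$, so the whole expression becomes rational in the symmetric functions of ${\color{blue}\gamma},{\color{blue}\gamma}'$ already at hand.

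The final step is to convert the arguments $(u+u',-uu')$ into the arguments $(-1+{\color{magenta}q}+{\color{cyan}t}{\color{magenta}q},\,-{\color{cyan}t}{\color{magenta}q}^2)$ appearing in ${\color{blue}T}$. The explicit sums defining ${\color{cyan}L}_n$ and ${\color{blue}F}_n$ give the homogeneity relations ${\color{cyan}L}_n(\lambda x,\lambda^2 s) = \lambda^n {\color{cyan}L}_n(x,s)$ and ${\color{blue}F}_n(\lambda x,\lambda^2 s) = \lambda^{n-1}{\color{blue}F}_n(x,s)$, and I apply them with $\lambda=-{\color{cyan}t}{\color{magenta}q}$. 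Combining the rescaled Lucas and Fibonacci pieces with the prefactors $({\color{blue}\gamma}+{\color{blue}\gamma}')/2$ and $(({\color{blue}\gamma}+{\color{blue}\gamma}')^2-4{\color{blue}\gamma}{\color{blue}\gamma}')/2$, and clearing the common factor $1/(2({\color{cyan}t}{\color{magenta}q})^{{\color{red}m}+1})$, yields exactly the target expression for ${\color{blue}T}$ up to the elementary identity
$$
(1-{\color{magenta}q}+{\color{cyan}t}{\color{magenta}q})^2-4{\color{cyan}t}{\color{magenta}q} = (1+{\color{magenta}q}-{\color{cyan}t}{\color{magenta}q})^2-4{\color{magenta}q},
$$
which follows from $(1+a)^2-(1-a)^2=4a$ applied to $a={\color{magenta}q}-{\color{cyan}t}{\color{magenta}q}$.

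The main obstacle is purely the careful bookkeeping of signs and of powers of $-{\color{cyan}t}{\color{magenta}q}$ during the rescaling step; once the Binet and homogeneity identities are in place, the verification is mechanical, and no additional combinatorial or analytic ingredient is required beyond what was already used for Corollary~\ref{cor:F}.
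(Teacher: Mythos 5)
Your proposal is correct. Note that the paper itself omits the proof of this lemma (``whose proof we omit here''), so there is nothing to compare against line by line; your conjugate-root argument is a natural extension of the method the paper does use for Corollary~\ref{cor:F} via Lemma~\ref{lem:ABC:operator}, where the identity $u^{\color{red}m}+(u')^{\color{red}m}={\color{cyan}L}_{\color{red}m}(u+u',-uu')$ already appears implicitly. I checked the key computations: from \eqref{eq:Narayana} one gets ${\color{blue}\gamma}+{\color{blue}\gamma}'=(1-{\color{magenta}q}+{\color{cyan}t}{\color{magenta}q})/({\color{cyan}t}{\color{magenta}q})$ and ${\color{blue}\gamma}{\color{blue}\gamma}'=1/({\color{cyan}t}{\color{magenta}q})$, hence $uu'=1/{\color{cyan}t}$ and $u+u'=(1-{\color{magenta}q}-{\color{cyan}t}{\color{magenta}q})/({\color{cyan}t}{\color{magenta}q})$; the product identity ${\color{blue}F}{\color{blue}F}'={\color{cyan}t}^{-{\color{red}m}-1}{\color{magenta}q}^{-2{\color{red}m}_0-1}$ follows, and the symmetrization together with the Binet identities and the homogeneity relations ${\color{cyan}L}_{\color{red}m}(\lambda x,\lambda^2 s)=\lambda^{\color{red}m}{\color{cyan}L}_{\color{red}m}(x,s)$, ${\color{blue}F}_{\color{red}m}(\lambda x,\lambda^2 s)=\lambda^{{\color{red}m}-1}{\color{blue}F}_{\color{red}m}(x,s)$ with $\lambda=-{\color{cyan}t}{\color{magenta}q}$ produce the two terms of ${\color{blue}T}$ with the correct relative sign $(-1)^{\color{red}m}$ versus $(-1)^{{\color{red}m}-1}$, and the discriminant identity $(1-{\color{magenta}q}+{\color{cyan}t}{\color{magenta}q})^2-4{\color{cyan}t}{\color{magenta}q}=(1+{\color{magenta}q}-{\color{cyan}t}{\color{magenta}q})^2-4{\color{magenta}q}$ is elementary. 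Two small remarks: your derivation yields the subscript ${\color{red}m}$ on ${\color{cyan}L}$ and ${\color{blue}F}$ in ${\color{blue}T}$, confirming that the subscript ${\color{purple}n}$ in the paper's statement is a typo; and you should say a word about why the Vieta argument is legitimate even though ${\color{blue}\gamma}'$ (which has a pole at ${\color{magenta}q}=0$) is not a power series --- one simply works in the quadratic extension of the field of Laurent series, where ${\color{blue}F}+{\color{blue}F}'$ and ${\color{blue}F}{\color{blue}F}'$ are rational and $(X-{\color{blue}F})(X-{\color{blue}F}')$ vanishes at $X={\color{blue}F}$.
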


It is interesting to observe that this quadratic equation simultaneously involves both Lucas and Fibonacci polynomials.
% >>>

\bibliographystyle{plain}

%\bibliography{../t1cigler.bib}

\end{document}